\def\x{{\textbf{\textit x\hspace{0.035cm}}}}
\def\y{{\textbf{\textit y\hspace{0.035cm}}}}
\def\vv{{\textbf{\textit v\hspace{0.035cm}}}}
\def\cC{\mathcal{C}}
\def\cDd{{\cal D}_{\#}}
\def\Had{H_{\alpha,\#}^1}
\def\Hmad{H_{\alpha,\#}^m}
\def\cSd{\mathcal{S}_{\#}}
\def\<{\langle}
\def\>{\rangle}
\begin{document}

\begin{frontmatter}



\title{Weighted Sobolev spaces for the Laplace equation \\in periodic infinite strips}


\author{Vuk Mili{{\v s}}i{\'c}}
\address{Universit{\'e} Paris 13,\\
Laboratoire Analyse, G{\'e}om{\'e}trie et Applications \\
CNRS UMR 7539
FRANCE}

\author{Ulrich Razafison}
\address{Universit{\'e} de Franche-Comt{\'e},\\
Laboratoire de Math{\'e}matiques de Besan\c{c}on,\\
CNRS UMR 6623,
FRANCE}

\begin{abstract}
This paper establishes isomorphisms for the Laplace operator in weighted Sobolev spaces (WSS). 
These $\wsdsnp{m}{\alpha}$-spaces are similar to standard Sobolev spaces $H^{m}(\R^n)$, 
but they are endowed with weights $(1+|x|^2)^{\alpha/2}$ prescribing  functions' growth or decay at infinity. Although well established in $\RR^n$ \cite{AmGiGiI.94}, these weighted results do not apply in the specific hypothesis of periodicity. 
This kind of problem appears when studying singularly perturbed domains (roughness, sieves, porous media, etc). 
When zooming on a single perturbation pattern, one often ends with a periodic problem set on an infinite strip. 
We present a unified framework that enables a systematic treatment of such problems. 
We  provide existence and uniqueness of solutions in our WSS. 
This gives a refined description of solution{\~O}s behavior at infinity which is of importance in the mutli-scale context.
These isomorphism results hold for any 
weight exponent $\alpha$ and any regularity index $m$. We then identify these solutions with the convolution of a 
Green function (specific to periodical infinite strips) and the given data. This identification is valid 
again for any $\alpha$ and any $m$ modulo some harmonic polynomials.
\end{abstract}

\begin{keyword}


weighted Sobolev spaces, Hardy inequality, isomorphisms of the Laplace operator, periodic infinite strip, Green function, convolution
\end{keyword}
\end{frontmatter}



\section{Introduction}\label{intro}

In this article, we solve the Laplace equation in a 1-periodic infinite strip in two space dimensions:
\begin{equation}
\label{Laplace_dans_Z}
\Delta u = f,\quad \text{ in } Z := ]0,1[ \times \RR.
\end{equation}
As the domain is infinite in the vertical direction, 
one introduces  weighted Sobolev spaces describing
 the behavior at infinity of the solution. 
This behavior is related to weighted Sobolev properties of $f$.

The usual weights, when adapted to our  problem, are polynomial functions
at infinity and regular bounded functions in the neighborhood of the origin:
they are powers of $\rho(y_2):=(1+y_2^2)^{1/2}$ and, in some critical cases, higher order 
derivatives are completed by logarithmic 
functions ($\rho(y_2)^\alpha\log^\beta(1+\rho(y_2)^2)$).

The literature on the weighted Sobolev spaces is wide \cite{Ha.71,AmGiGiII.97,Girault_DIE_1994, Girault_JFSUT_1992,
Boulmezaoud_M2AS_2003, Boulmezaoud_M2AS_2002,MR2665021, MR2454803, MR2329265, MR2329264} and deals with 
various types of domains. To our knowledge, 
this type of weights has not been applied to problem \eqref{Laplace_dans_Z}. 
The choice of the physical domain comes from periodic singular problems : in \cite{JaMiJDE.01,NeNeMi.06,MiStent},
 a zoom around a domain's periodic $\e$-perturbation  leads to set an obstacle of size 1 in $Z$ and to consider 
a microscopic problem defined on {\em a boundary layer}. 
The behavior at infinity of this {\em microscopic} solution is of importance: it provides
an  averaged feed-back on the macroscopic scale (see \cite{BrMiQam} and references therein).
This paper is a first step towards a systematic
analysis of such microscopic problems. We intend to give a standard framework  to skip 
tedious and particular proofs related to the unboundedness of $Z$.

We provide  isomorphisms of the Laplace operator between our weighted Sobolev spaces.
It is the first step among results in the spirit of \cite{AmGiGiII.97,AmGiGiI.94,AmNe01}. Since
error estimates for boundary layer problems \cite{JaMiJDE.01, MiStent, NeNeMi.06} are mostly performed 
in the $H^s$ framework we focus here on weighted Sobolev spaces  $W^{m,p}_{\alpha,\beta}(Z)$  with $p=2$.
There are three types of tools used here: 
arguments specifically related to weighted Sobolev spaces \cite{AmGiGiI.94,These_Giroire},
variational techniques from the homogenization literature \cite{MiStent, Ba.Siam.76, JaMiFilter} 
and some potential theory methods \cite{McOwen.1979}.
A general scheme might illustrate how these ideas relate one to each other :
\begin{itemize}
\item[-] a Green function $G$ specific to the periodic infinite strip is exhibited for the Laplace operator. 
The convolution of  $f$ with $G$ provides an explicit solution to \eqref{Laplace_dans_Z}. 
A particular attention is provided to give the weakest possible  meaning to the latter convolution 
under minimal requirements on $f$. 
\item[-] variational {\em inf-sup} techniques provide {\em a priori }  estimates, in an initial 
range of weights, for interior and exterior problems with Dirichlet boundary conditions. Then a lift is built
in order to cancel the lineic Dirac mass that's introduced by  artificial interfaces. This leads to first isomorphism results.
\item[-] these arguments are then applied to weighted derivatives and give natural regularity shift results in $H^m_{\alpha,\#}(Z)$ 
for $m\geq 2$ (see below).
\item[-] by duality and appropriate use of generalized Poincar{\'e} estimates (leading to interactions - orthogonality or quotient spaces - with various polynomial families), 
one ends  with generic isomorphism result that reads
\begin{thm}\label{thm.isom.final}
 For any   $\alpha\in \R$ and any $m \in \Z$, the mapping
$$
\Delta\,:X_{\alpha,\#}^{m+2}(Z)/\PP_{q(m+2,\alpha)}^{'\Delta}\mapsto X_{\alpha,\#}^m(Z)\bot\PP_{q(-m,-\alpha)}^{'\Delta}
$$
is an isomorphism.
\end{thm}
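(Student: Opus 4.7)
The plan follows the four-step programme outlined in the introduction. I start with the base case: fix $m=0$ and restrict $\alpha$ to an initial range in which both polynomial dimensions $q(m+2,\alpha)$ and $q(-m,-\alpha)$ vanish, so that the quotient and the orthogonality drop out of the statement. In this range the isomorphism $\Delta : X^2_{\alpha,\#}(Z) \to X^0_{\alpha,\#}(Z)$ should be obtained from a Lax--Milgram / inf-sup argument on $\Had(Z)$, using a weighted Hardy-type inequality adapted from \cite{AmGiGiI.94} as the key coercivity ingredient. The decomposition into interior and exterior problems with Dirichlet conditions at $y_2 = \pm R$ creates lineic Dirac masses on the artificial interfaces that are then removed by the lift mentioned in the introduction.

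Next I would propagate regularity to all $m\geq 0$. Periodicity in $y_1$ means that applying $\partial_{y_1}^k$ to $\Delta u = f$ yields the same Laplace problem with data $\partial_{y_1}^k f$, so tangential derivatives are treated by recursion on $k$. For the transverse direction, combining standard interior elliptic regularity with the weighted estimates applied to $\Delta \partial_{y_2} u = \partial_{y_2} f$ gives control of $\partial_{y_2} u$ in the appropriate weighted space. To extend the weight range to arbitrary $\alpha\in\RR$, I would invoke the explicit Green function $G$ for the periodic strip: the convolution $G\ast f$ supplies a candidate solution whose behaviour as $|y_2|\to\infty$ can be read off $G$. The gap between this asymptotic behaviour and the admissible growth of $X^{m+2}_{\alpha,\#}(Z)$ is a finite-dimensional space of periodic harmonic polynomials, which is precisely $\PP_{q(m+2,\alpha)}^{'\Delta}$; dually, compatibility of $f$ against the corresponding polynomials produces the orthogonality constraint $\bot\PP_{q(-m,-\alpha)}^{'\Delta}$.

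Finally, for $m<0$ I would argue by transposition: $X^m_{\alpha,\#}(Z)$ is identified with the dual of $X^{-m}_{-\alpha,\#}(Z)$ modulo the correct polynomial space, and the Laplacian is self-adjoint under the $\Lad$-duality (boundary terms cancel by periodicity and decay), so the isomorphism of the previous step transposes to yield the result. The principal obstacle I anticipate lies in the third step, namely the bookkeeping of the polynomial families. The integers $q(m+2,\alpha)$ and $q(-m,-\alpha)$ must be arranged so that the kernel produced by the Green-function analysis agrees exactly with the cokernel obtained from duality; verifying this identity amounts to determining, for every $\alpha$, which harmonic polynomials (and, at critical weights, which logarithmically decorated variants of type $\rho^\alpha\log^\beta(1+\rho^2)$) belong to $X^{m+2}_{\alpha,\#}(Z)$. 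The critical weights are precisely where the unified framework announced in the abstract has to do genuine work, and a careful case analysis of the asymptotic expansion of $G\ast f$ will be unavoidable there.
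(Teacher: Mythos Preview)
Your first and last steps match the paper's approach closely: the base variational isomorphism is indeed obtained on $\Had(Z)$ by an inf-sup argument on the truncated domains $Z_R$ and $Z^R$ followed by a lift cancelling the interface Dirac masses (Theorems~\ref{premiers_isomorphismes} and Lemmas~\ref{lem.inf.sup.trunc}--\ref{lem.sauts.seuls}), and negative $m$ is reached by transposition. Note, however, that the natural output of the inf-sup is $\Delta:\Had(Z)/\R\to H^{-1}_{\alpha,\#}(Z)\bot\R$, i.e.\ the case $m=-1$, not the $X^2\to X^0$ case you state.

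Your steps 2 and 3 diverge from the paper. For the regularity shift, the paper does \emph{not} differentiate; it multiplies by the weight. The induction in Theorem~\ref{regularite_Hm} rests on the identity
\[
\Delta(\rho\,\partial_i u)=\rho\,\partial_i f+(\Delta\rho)\,\partial_i u+2\nabla\rho\cdot\nabla(\partial_i u),
\]
which raises the regularity index and the weight exponent \emph{together}, moving from $H^{1+k}_{\alpha+k,\#}$ to $H^{2+k}_{\alpha+k+1,\#}$. Your pure differentiation $\Delta(\partial_{y_2}u)=\partial_{y_2}f$ stays at fixed weight and so never leaves the initial interval $|\alpha|\le 1/2$. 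The paper then reaches arbitrary $\alpha$ purely by algebra: the diagonal climb gives high-regularity isomorphisms at small weight, transposition flips these to negative regularity at the opposite weight, and a second diagonal climb returns to $H^1\to H^{-1}$ at any prescribed weight (Theorem~\ref{iso_generales}). No Green function is used at this stage.

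This matters for your step 3: the convolution estimates in the paper (Theorem~\ref{thm.convo.reg}, Proposition~\ref{prop.conv.dual}) all carry an $\epsilon$-loss in the weight exponent (they map $L^2_\alpha$ into $H^2_{\alpha-2-\epsilon,\#}$, not $H^2_{\alpha-2,\#}$), so $G\ast f$ cannot deliver the sharp isomorphism directly. In the paper the Green function enters only \emph{after} Theorem~\ref{thm.isom.final} is established, to identify the abstract solution with $G\ast f$ (Theorem~\ref{thm.final}). Finally, at the critical weights $\alpha\in\tfrac12+\Z$ the paper does not perform an asymptotic expansion of $G\ast f$ with logarithmic corrections; it introduces the $X$-scale precisely so that no logarithms appear, and reruns the diagonal induction with $\Delta(y_2\,\partial_i u)$ replacing $\Delta(\rho\,\partial_i u)$ (Theorem~\ref{th_iso_critique}).
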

The spaces $\wsx{m}{\alpha}{Z}$ are generalized WSS whose precise definition is given in section \ref{sec.critic}.
They are introduced in order to deal with critical  values  of $(m,\alpha)$.
The spaces $\pP_{q(m,\alpha)}$ of harmonic polynomials included in $ \wsx{m}{\alpha}{Z}$ are defined in section \ref{sec.notations}.
\item[-] for all  values of $\alpha\in \R$ and $m\in\Z$, we identify explicit solutions obtained via convolution 
with solutions given in Theorem \ref{thm.isom.final}. This gives our main result:

\begin{thm}\label{thm.final}
Let $m\in\Z$, $\alpha\in\R$ and $f\in X_{\alpha,\#}^{m}(Z)\bot\PP_{q(-m,-\alpha)}^{'\Delta}$.
Then $G * f\in X_{\alpha,\#}^{m+2}(Z)$ is the unique  solution of the Laplace equation \eqref{Laplace}
up to a polynomial of $\pP_{q(m+2,\alpha)}$.
Moreover, we have the estimate
$$\|G * f\|_{X_{\alpha,\#}^{m+2}(Z)/\pP_{q(m+2,\alpha)}}\leq\|f\|_{X_{\alpha,\#}^{m}(Z)}.$$
\end{thm}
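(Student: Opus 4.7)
The plan is to combine the abstract isomorphism of Theorem \ref{thm.isom.final} with a direct analysis of the potential $G*f$, and to identify the two solutions modulo harmonic polynomials. First I would invoke Theorem \ref{thm.isom.final}: for $f\in X^m_{\alpha,\#}(Z)\bot\PP^{'\Delta}_{q(-m,-\alpha)}$, it yields a unique $u\in X^{m+2}_{\alpha,\#}(Z)/\PP^{'\Delta}_{q(m+2,\alpha)}$ with $\Delta u=f$ and
$$
\|u\|_{X^{m+2}_{\alpha,\#}(Z)/\PP^{'\Delta}_{q(m+2,\alpha)}}\leq\|f\|_{X^m_{\alpha,\#}(Z)}.
$$
The remaining task is to show that $G*f$ is a representative of this equivalence class and that the corresponding estimate in terms of $\pP_{q(m+2,\alpha)}$ follows.

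Next I would give a meaning to $v:=G*f$. When $f\in\cDd(Z)$ the convolution is classical, since $G$ is locally integrable and has controlled growth at infinity. For general $f\in X^m_{\alpha,\#}(Z)$ the definition is extended by density, and for negative $m$ by duality via $\langle G*f,\varphi\rangle:=\langle f,\check G*\varphi\rangle$ on a suitable test space. The orthogonality condition $f\bot\PP^{'\Delta}_{q(-m,-\alpha)}$ is precisely what is needed so that the polynomial moments of $f$ cancel the polynomial part of $G$ at infinity and make the integral converge; this parallels the classical Taylor-subtraction constructions for the Newtonian potential in $\R^n$ as used in \cite{AmGiGiI.94,McOwen.1979}.

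Once $v$ is defined I would verify two properties: first, $\Delta v=f$ in $\mathcal{D}'(Z)$, which follows from $\Delta G=\delta$ and the usual distributional identities for convolution; second, $v\in X^{m+2}_{\alpha,\#}(Z)$, which I would establish by combining sharp pointwise bounds on $G$ and its derivatives in the periodic strip with the weighted Hardy inequalities proved earlier, case-splitting on the sign of $m+2$ and on the critical thresholds of $\alpha$ introduced in section \ref{sec.critic}. With these two properties in hand, the difference $u-v$ is a harmonic element of $X^{m+2}_{\alpha,\#}(Z)$ and hence, by the characterization of harmonic polynomials in these weighted spaces (see section \ref{sec.notations}), belongs to $\pP_{q(m+2,\alpha)}$. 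This identifies $G*f$ as a representative of the class of $u$ and transports the quotient-norm estimate.

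The main difficulty will be proving $v\in X^{m+2}_{\alpha,\#}(Z)$ uniformly in the weight index. The logarithmic corrections appearing at critical weights, together with the bookkeeping of which polynomial moments of $G$ must be subtracted near the singularity and as $|y_2|\to\infty$, make this a genuinely technical computation. The orthogonality condition on $f$ must be invoked to cancel precisely those moments, and the resulting estimates are what ultimately produce the quotient modulo $\pP_{q(m+2,\alpha)}$ on the right-hand side of the final bound.
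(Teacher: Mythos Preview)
Your overall architecture---invoke Theorem~\ref{thm.isom.final} to produce $u$, construct $v=G*f$, check $\Delta v=f$, and identify $u$ and $v$ via Proposition~\ref{prop_noyau}---matches the paper's scheme exactly. The difference lies in how you propose to obtain the key membership $v\in X^{m+2}_{\alpha,\#}(Z)$.

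You plan to prove $v\in X^{m+2}_{\alpha,\#}(Z)$ \emph{directly} from pointwise bounds on $G$ and the weighted Hardy inequalities. The paper does \emph{not} do this, and for good reason: its convolution estimates (Theorem~\ref{thm.convo.reg}, Proposition~\ref{prop.conv.dual}) all carry an $\varepsilon$-loss in the weight, so they cannot place $G*f$ in the sharp space by themselves. Obtaining sharp weighted bounds for the potential directly would essentially amount to reproving the isomorphism by a Calder\'on--Zygmund-type argument, which the paper never develops for the periodic strip. So as written, this step is a genuine gap: the tools ``proved earlier'' are not strong enough to deliver it.

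The paper's route is a bootstrap that sidesteps this difficulty. It proceeds in three stages. First, for $f\in L^2_{\alpha+1}(Z)\bot\PP^{'\Delta}_{[1/2+\alpha]}$ with $\alpha>-1/2$, it combines the lossy Theorem~\ref{thm.convo.reg} with the isomorphism \eqref{iso_final} at $m=0$ and Proposition~\ref{prop_noyau} to conclude $G*f\in X^2_{\alpha+1,\#}(Z)$ with the sharp estimate; the sharp bound comes from the isomorphism, not from the convolution estimate. Second, by duality with this step it treats $f\in H^{-1}_{\alpha,\#}(Z)$ for all $\alpha$, obtaining $G*f\in H^1_{\alpha,\#}(Z)$ sharply. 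Third, for general $m\geq 0$ it embeds $f\in X^m_{\alpha,\#}(Z)$ into the coarser space $H^{-1}_{\alpha-m-1,\#}(Z)$ (noting $\pP_{q(-m,-\alpha)}=\pP_{q(1,-\alpha+m+1)}$), applies step~2 to get $G*f\in H^1_{\alpha-m-1,\#}(Z)$, and then compares with the isomorphism solution $u\in X^{m+2}_{\alpha,\#}(Z)\subset H^1_{\alpha-m-1,\#}(Z)$: their difference is harmonic in the coarse space, hence by Proposition~\ref{prop_noyau} lies in $\pP_{q(1,\alpha-m-1)}=\pP_{q(m+2,\alpha)}\subset X^{m+2}_{\alpha,\#}(Z)$, which \emph{a posteriori} forces $G*f\in X^{m+2}_{\alpha,\#}(Z)$. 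Negative $m$ is then handled by duality. The point is that the sharp membership of $G*f$ is never proved directly; it is inherited from $u$ once the two are identified in a weaker space.
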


\end{itemize}

The paper is organized as follows. In section \ref{sec.notations}, 
we define the basic functional framework and some preliminary results used in this article.
In section \ref{sec.poincare}, we adapt weighted Poincar{\'e} estimates to our setting. 
Then (section \ref{sec.fourier}), we introduce a mixed Fourier transform: it is a  discrete Fourier transform in the horizontal 
direction and a continuous transform in the vertical direction. 
This operator allows the explicit computation of the Green function, 
and we derive weighted and standard estimates of the convolution with this fundamental solution. 
At this stage, we prove a series of isomorphisms in the non-critical 
case (section \ref{section_Laplace}) by variational techniques. 
In the last section, we extend these to the  critical cases. In a last step
we identify any of the solutions above with the convolution between $G$ and the data $f$,  this proves Theorem \ref{thm.final}.

\section{Notation, preliminary results and functional framework}\label{sec.notations}

\subsection{Notation and preliminaries}

We denote by $Z$ the two-dimensional infinite strip defined by
$$Z:=]0,1[\times\R.$$
We use bold characters for vector or matrix
fields. A point in $\R^2$ is denoted by $\y=(y_1,y_2)$ and its
distance to the origin by
$$r:=|\y|=\left(y_1^2+y_2^2\right)^{1/2}.$$
Let $\N$ denote the set of non-negative integers, $\Z$ the set
of all integers and $\Z^*=\Z\setminus\{0\}$. We denote by
$[k]$ the integer part of $k$. For any $j\in\Z$, $\PP_j'$ stands for
the polynomial space of degree less than or equal to $j$ that only depends on $y_2$. 
If $j$ is a negative integer, we set by convention $\PP_j'=\{0\}$. 
We define $\pP_j$ the subspace of harmonic polynomials of $\PP_j'$. 
The support of a function $\varphi$ is denoted by $\mbox{supp}(\varphi)$.
We recall that $\cD(\R)$ and $\cD(\R^2)$ are  spaces of $\cC^\infty$ functions with compact support in $\R$ and $\R^2$
respectively, $\cD'(\R)$ and $\cD'(\R^2)$ their dual spaces, namely the spaces of distributions. We denote by $\cS(\R)$ the Schwartz space of functions in $\cC^\infty(\R)$ with rapid decrease at infinity, and by $\cS'(\R)$ its dual, i.e.
the space of tempered distributions. We recall that, for $m\in\N$, $H^m$ is the classical Sobolev space and we denote by $H_\#^m(Z)$
the  space of functions that belong to $H^m(Z)$ and that are 1-periodic in the $y_1$ direction.
Given a Banach space $B$ with its dual $B'$ and a closed subspace $X$ of $B$, we denote by $B'\bot X$ the subspace of $B'$ orthogonal to $X$, \textit{i.e.}:
$$B'\bot X=\{f\in B',\,\,\forall v\in X, \<f,v\>=0\}=(B/X)'.$$

\noindent We introduce $\tau_\lambda$ the operator of translation of $\lambda\in\Z$ in the $y_1$ direction. 
If $\Phi:\R^2\mapsto\R$ is a function, then we have
$$(\tau_\lambda\Phi)(\y):=\Phi(y_1-\lambda,y_2).$$
\noindent For any $\Phi\in\cD(\R^2)$, we set 
$
\overline{\omega}\Phi:=\sum_{\lambda\in\Z}\tau_\lambda\Phi
$,
which is the $y_1$-periodical transform of $\Phi$. 
The mapping $y_1\mapsto\overline{\omega}\Phi(\y)$ belongs to $\cC^\infty(\R)$ and is 1-periodic.
Observe that there exists a function $\theta$ satisfying 
$$\theta\in\cD(\R)\mbox{ and }\overline{\omega}\theta=1.$$
More precisely, consider a function $\psi\in\D(\R)$ such that $\psi>0$ on the interior of its support. 
Then we simply set $\theta=\frac{\psi}{\overline{\omega}\psi}$ . 
The function $\theta$ is called a periodical $\cD(\R)$-partition of unity.\\

\noindent If $T\in\cD'(\R^2)$, then for all $\varphi\in\cD(\R^2)$, we set
$$\<\tau_\lambda T,\varphi\>:=\<T,\tau_{-\lambda}\varphi\>,\,\,\,\lambda\in\Z.$$

\noindent Similarly, if $T\in\cD'(\R^2)$ has a compact support in the $y_1$ direction, 
then the $y_1$-periodical transform of $T$, denoted by $\overline{\omega}T$, is defined by

$$\<\overline{\omega} T,\varphi\>:=\<T,\overline{\omega}\varphi\>,\hspace{3mm}\forall\varphi\in\cD(\R^2).$$

\noindent These definitions are well-known, we refer for instance to \cite{VoKhacKhoan.Book2.1972} and \cite{Book_Schwartz}.

\begin{rmk}
\label{rem_preliminaire}
\
{\rm
Let $\Phi$ be in $\cD(\R^2)$. Then we have
\begin{equation}
\label{rem_preliminaire_eq1}
\tau_\lambda\partial_i\Phi=\partial_i(\tau_\lambda\Phi),\hspace{3mm}\forall\lambda\in\Z,
\end{equation}
\begin{equation}
\label{rem_preliminaire_eq2}
\overline{\omega}(\partial_i\Phi)=\partial_i(\overline{\omega}\Phi)
\end{equation}
and
\begin{equation}
\label{rem_preliminaire_eq3}
\|\tau_\lambda\Phi\|_{L^2(Z)}\leq\|\Phi\|_{L^2(\R^2)}.
\end{equation}
\noindent As a consequence of equality \eqref{rem_preliminaire_eq2}, if $u\in\cD'(\R^2)$, we also have 
\begin{equation}
\label{rem_preliminaire_eq4}
\overline{\omega}(\partial_i u)=\partial_i(\overline{\omega}u).
\end{equation}
}
\end{rmk}
\medskip
\noindent The next lemma is used to prove the density result of Proposition \ref{prop_densite}.
\begin{lemma}
\label{lem_preliminaire}
Let $K$ be a compact of $\R^2$. Let $u$ be in $H^m(\R^2)$ and have a compact support included in $K$.
Then we have $$\|\overline{\omega}\,u\|_{H^m(Z)}\leq N(K)\|u\|_{H^m(\R^2)},$$
where $N(K)$ is an integer only depending on $K$.
\end{lemma}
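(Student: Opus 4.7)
The plan is to exploit the compact support of $u$: the periodic sum $\overline{\omega}u=\sum_{\lambda\in\Z}\tau_\lambda u$ collapses to a finite sum when restricted to $Z$, with a number of terms controlled only by the geometry of $K$. Once this finiteness is established, the triangle inequality in $H^m(Z)$, combined with the commutation relation \eqref{rem_preliminaire_eq4} and the $L^2$-contractivity \eqref{rem_preliminaire_eq3}, yields the estimate directly.

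First, since $K$ is compact, there exists $R>0$ such that $K\subset[-R,R]\times\R$. The translate $\tau_\lambda u$ is then supported in $[\lambda-R,\lambda+R]\times\R$, which meets $Z=\,]0,1[\,\times\R$ only when the interval $[\lambda-R,\lambda+R]$ intersects $]0,1[$, that is, for $\lambda$ in the finite set $\Lambda_K:=\{\lambda\in\Z:\,-R<\lambda<1+R\}$. Its cardinality $N(K):=\#\Lambda_K\leq 2[R]+2$ is a genuine integer depending only on $K$, and on $Z$ we have the finite identity $\overline{\omega}u=\sum_{\lambda\in\Lambda_K}\tau_\lambda u$.

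Second, by the triangle inequality in $H^m(Z)$ and the identity above,
$$\|\overline{\omega}u\|_{H^m(Z)}\leq\sum_{\lambda\in\Lambda_K}\|\tau_\lambda u\|_{H^m(Z)}.$$
For each $\lambda$, combining the commutation \eqref{rem_preliminaire_eq1} applied to every derivative $\partial^\beta u$, $|\beta|\leq m$, with the contractivity \eqref{rem_preliminaire_eq3} (both extended from $\cD(\R^2)$ to $H^m(\R^2)$ by density), one obtains
$$\|\tau_\lambda u\|_{H^m(Z)}^2=\sum_{|\beta|\leq m}\|\tau_\lambda\partial^\beta u\|_{L^2(Z)}^2\leq\sum_{|\beta|\leq m}\|\partial^\beta u\|_{L^2(\R^2)}^2=\|u\|_{H^m(\R^2)}^2.$$
Summing over $\lambda\in\Lambda_K$ then produces the claimed bound with constant $N(K)$.

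I do not anticipate any genuine obstacle: the argument is essentially a finiteness-plus-triangle-inequality computation, made possible by the commutation and invariance properties collected in Remark \ref{rem_preliminaire}. The only point that requires a line of verification is that $N(K)$ is indeed an integer depending solely on $K$ (and not on $u$ nor on $m$), which follows from the explicit description of $\Lambda_K$ above.
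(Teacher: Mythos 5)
Your proof is correct and follows essentially the same route as the paper: compactness of $K$ makes the periodic sum finite on $Z$ with at most $N(K)$ terms, and the bound follows from the triangle inequality together with the commutation of translation/periodization with derivatives and the $L^2$-contractivity \eqref{rem_preliminaire_eq3}. The only cosmetic difference is that you apply the commutation term by term via \eqref{rem_preliminaire_eq1} whereas the paper invokes \eqref{rem_preliminaire_eq4} for the whole periodized sum.
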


\begin{proof}
Let us first notice that, since $K$ is compact, there is a finite number of $\lambda\in\Z$ such that 
$\mbox{supp}(\tau_\lambda u)\cap[0,1]\times\R$ is not an empty set. This number is bounded by a finite integer $N(K)$ that only depends on $K$. 
It follows that $\overline{\omega}\,u$ is a finite sum and 
$$\|\overline{\omega}\,u\|_{L^2(Z)}\leq N(K)\|u\|_{L^2(\R^2)}.$$
The end of the proof then follows from \eqref{rem_preliminaire_eq4}.
\end{proof}

\noindent We now define the following spaces 
$$
\begin{aligned}
\cC_{\#}^\infty(Z)& :=\{&\varphi:Z\mapsto\R,\,
y_1\mapsto\varphi(\y)\in\cC^\infty([0,1])\mbox{ 1-periodic, }
& y_2\mapsto\varphi(\y)\in\cC^\infty(\R)\},
\\
\cDd(Z)&:=\{&\varphi:Z\mapsto\R,\,
y_1\mapsto\varphi(\y)\in\cC^\infty([0,1])\mbox{ 1-periodic, }
& y_2\mapsto\varphi(\y)\in\cD(\R)\},
\\
\cSd(Z)&:=\{&\varphi:Z\mapsto\R,\,
y_1\mapsto\varphi(\y)\in\cC^\infty([0,1])\mbox{ 1-periodic, }
& y_2\mapsto\varphi(\y)\in\cS(\R)\}.
\end{aligned}
$$
\noindent The dual spaces of $\cDd(Z)$ and $\cSd(Z)$ are denoted by $\cDd'(Z)$ and $\cSd'(Z)$ respectively.\\ 

 
\subsection{Weighted Sobolev spaces in an infinite strip}
\label{weight}

\noindent We introduce the weight 
$$
\rho\equiv\rho(y_2):=\left(1+y_2^2\right)^{1/2}.
$$
Observe that, for $\lambda\in\N$ and for $\gamma\in\R$, as $|y_2|$ tends to infinity, we have
\begin{equation}
\label{derivation_poids}
\left|\frac{\partial^\lambda\rho^{\gamma}}{\partial y_2^\lambda}\right|\leq C\rho^{\gamma-\lambda}.
\end{equation}
For $\alpha\in\R$, we define the  weighted space
$$L_{\alpha}^2(Z)\equiv H_{\alpha,\#}^0(Z)=\{u\in\cD'(Z),\,\rho^{\alpha}u\in L^2(Z)\},$$
\noindent which is a Banach space equipped with the norm
$$\|u\|_{L_{\alpha}^2(Z)}=\|\rho^{\alpha}u\|_{L^2(Z)}.$$

\begin{proposition}
\label{densite_L2}
The space $\cD(Z)$ is dense in $L_{\alpha}^2(Z)$.
\end{proposition}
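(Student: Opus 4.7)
The plan is to reduce the statement to the classical density of $\cD(Z)$ in the unweighted space $L^2(Z)$ by means of a multiplicative isomorphism. The key observation is that, since $\rho$ is smooth, strictly positive and depends only on $y_2$, the map $u\mapsto\rho^{\alpha}u$ is an isometric isomorphism from $L_{\alpha}^{2}(Z)$ onto $L^{2}(Z)$: it preserves norms by the very definition of $\|\cdot\|_{L_{\alpha}^{2}(Z)}$, and its inverse is multiplication by $\rho^{-\alpha}$, which is $\cC^{\infty}(\R)$ and bounded on every compact subset of $\R$.

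Given $u\in L_{\alpha}^{2}(Z)$, I would set $v:=\rho^{\alpha}u\in L^{2}(Z)$ and apply the standard fact that $\cD(Z)$ is dense in $L^{2}(Z)$ for any open subset $Z$ of $\R^{2}$ to obtain a sequence $(\psi_n)_n\subset\cD(Z)$ with $\psi_n\to v$ in $L^{2}(Z)$. Then I would define $\varphi_n:=\rho^{-\alpha}\psi_n$. Because $\rho^{-\alpha}$ is $\cC^{\infty}(\R)$ and $\mbox{supp}(\psi_n)$ is a compact subset of the open strip $Z$, the product $\varphi_n$ is still $\cC^{\infty}$ and has compact support contained in $\mbox{supp}(\psi_n)\subset Z$; hence $\varphi_n\in\cD(Z)$.

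To conclude, the isometry property yields
$$\|u-\varphi_n\|_{L_{\alpha}^{2}(Z)}=\|\rho^{\alpha}(u-\varphi_n)\|_{L^{2}(Z)}=\|v-\psi_n\|_{L^{2}(Z)}\longrightarrow 0,$$
so $\varphi_n\to u$ in $L_{\alpha}^{2}(Z)$, which proves the density. There is no genuine obstacle: the weight $\rho^{\alpha}$ and its reciprocal are smooth and strictly positive, and multiplication by either of them preserves both the smoothness of test functions and the property of having compact support inside $Z$, so the weighted result transfers transparently from the unweighted one.
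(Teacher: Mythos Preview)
Your proof is correct and follows essentially the same approach as the paper: both reduce to the unweighted density of $\cD(Z)$ in $L^2(Z)$ via the isometry $u\mapsto\rho^{\alpha}u$, then pull back an approximating sequence by multiplying by $\rho^{-\alpha}$. Your write-up is in fact slightly more careful in justifying that $\rho^{-\alpha}\psi_n\in\cD(Z)$, but the argument is identical.
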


\begin{proof}
Let $u$ be in $L_{\alpha}^2(Z)$. Then, by definition of the space $L_{\alpha}^2(Z)$,
it follows that $\rho^\alpha u\in L^2(Z)$. Therefore there exists a sequence $(u_n)_{n\in\N}\subset\cD(Z)$
such that $u_n$ converges to $\rho^\alpha u$ in $L^2(Z)$ as $n\to\infty$. Thus, setting $v_n=\rho^{-\alpha}u_n$, 
we see that $v_n$ converges to $u$ in $L_\alpha^2(Z)$ as $n\to\infty$.
\end{proof}

\begin{rmk}
{\rm Observe that we have the algebraic inclusion $\cD(Z)\subset\cDd(Z)$.
It follows that $\mbox{the space }\cDd(Z)\mbox{ is dense in }L_\alpha^2(Z).$}
\end{rmk}
For a non-negative integer $m$ and a real number $\alpha$, we set
$$
k=k(m,\alpha):=\left\{\begin{array}{ll}
\vspace{2mm}
-1\hspace{15mm}\mbox{ if }\hspace{1.5mm}\,\,\,\,\alpha\notin\{1/2,...,m-1/2\}\\
m-1/2-\alpha\hspace{2mm}\mbox{ if
}\hspace{1.5mm}\alpha\in\{1/2,...,m-1/2\}\end{array}\right.
$$
and we introduce the weighted Sobolev space
\begin{align}
\Hmad(Z):=\{u\in\cDd'(Z);\forall\lambda\in\N^2,
&0\leq|\lambda|\leq k,\hspace{1.5mm}\rho^{-m+|\lambda|}(\ln(1+\rho^2))^{-1}\partial^{\lambda}u\in L_\alpha^2(Z),\nonumber\\
&k+1\leq|\lambda|\leq
m,\hspace{1.5mm}\rho^{-m+|\lambda|}\partial^{\lambda}u\in
L_\alpha^2(Z)\},\nonumber
\end{align}
which is a Banach space when endowed with the norm
\begin{align}
\|u\|_{\Hmad(Z)}:=&\left(\displaystyle\sum_{0\leq|\lambda|\leq
k}
\|\rho^{-m+|\lambda|}(\ln(1+\rho^2))^{-1}\partial^{\lambda}u\|_{L_\alpha^2(Z)}^2\right.
\left.+\displaystyle\sum_{k+1\leq|\lambda|\leq m}
\|\rho^{-m+|\lambda|}\partial^{\lambda}u\|_{L_\alpha^2(Z)}^2\right)^{1/2}.\nonumber
\end{align}
\noindent We define the semi-norm
$$|u|_{\Hmad(Z)}:=\left(\sum_{|\lambda|=m}\|
\partial^{\lambda}u\|_{L_\alpha^2(Z)}^2\right)^{1/2}.$$ 
\noindent Observe that the logarithmic weight function only appears for the so-called critical cases $\alpha\in\{1/2,...,m-1/2\}$. 
Local properties of the spaces $\Hmad(Z)$ coincide with those 
of the classical Sobolev space $H_\#^m(Z)$. When $\alpha\notin\{1/2,...,m-1/2\}$
we have the following algebraic and topological inclusions~:
\begin{equation}
\label{inclusions_poids}
\Hmad(Z)\subset H_{\alpha-1,\#}^{m-1}(Z)\subset...\subset L_{\alpha-m}^2(Z).
\end{equation}
When $\alpha\in\{1/2,...,m-1/2\}$, the logarithmic weight function appears, so that we only have the inclusions 
\begin{equation}
\label{inclusions_poids_log}
\Hmad(Z)\subset...\subset H_{1/2,\#}^{m-\alpha+1/2}(Z).
\end{equation}
Observe that the mapping 
\begin{equation}
\label{derivation}
u\in\Hmad(Z)\mapsto\partial^\lambda u\in H_{\alpha,\#}^{m-|\lambda|}(Z)
\end{equation}
is continuous for $\lambda\in\N^2$.
Using \eqref{derivation_poids}, for $\alpha,\gamma\in\R$
such that $\alpha\notin\{1/2,...,m-1/2\}$ and $\alpha-\gamma\notin\{1/2,...,m-1/2\}$, the mapping 
\begin{equation}
\label{multiplication_poids}
u\in\Hmad(Z)\mapsto\rho^{\gamma}u\in H_{\alpha-\gamma,\#}^m(Z)
\end{equation}
is an isomorphism.\\
Let $q$ be  the greatest non-negative integer such that $y_2^q \in \Hmad(Z)$. An easy computation 
shows that $q$ reads~: 
\begin{equation}
\label{def_q} 
q\equiv q(m,\alpha):=\left\{\begin{array}{ll}
m-3/2-\alpha,\,\,\,\mbox{if }\alpha+1/2\in\{i\in\Z;i\leq0\},\\

[m-1/2-\alpha],\,\,\,\mbox{ otherwise.}
\end{array}\right.
\end{equation}
\noindent For example (see also tab. \ref{tab.poly}),  if $m=0$ one has :
\begin{equation}
\label{def_q_bis} 
q=q(0,\alpha):=\left\{\begin{array}{ll}
-3/2-\alpha,\,\,\,\mbox{if }\alpha=\pm(\frac{1}{2}+i),  i\in\Z^*,\\

[-1/2-\alpha],\,\,\,\mbox{ otherwise.}
\end{array}\right.
\end{equation}

\begin{table}[ht!]
\begin{center}
\begin{tabular}{r|c|c|c|c|}
$m$ $\backslash$ $\alpha$ & $\left[ -\frac{5}{2}; -\td\right[$ & $\left[ -\td;-\ud \right[$ & $\left[ -\ud;\ud \right[$  & $\left[\ud; \td \right[$ \\
\hline
$\PP'_{q(0,\alpha)}$ & $\PP_1'$  & $\PP_0'$  & 0 & 0 \\
\hline
$\PP'_{q(1,\alpha)}$ & $\PP'_2$ & $\PP'_1$ & $\PP'_0$ & 0 \\
\hline
$\PP'_{q(2,\alpha)}$ & $\PP'_3$ & $\PP'_2$ & $\PP'_1$ & $\PP'_0$  \\
\hline
\end{tabular}
\caption{Polynomial spaces included in $\wsd{m}{\alpha}{Z}$ for various values of $\alpha$ and $m$}\label{tab.poly}
\end{center}
\end{table}
\begin{proposition}
\label{prop_densite}
The space $\cDd(Z)$ is dense in $\Hmad(Z)$.
\end{proposition}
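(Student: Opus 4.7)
Let $u\in \Hmad(Z)$. The strategy is the classical three-step scheme adapted to the periodic strip: (i) truncate in the vertical direction, (ii) cut to one period in $y_1$ using the partition of unity $\theta$ of Section \ref{sec.notations} and mollify on $\R^2$, (iii) periodize via the $y_1$-periodical transform $\overline{\omega}$. Lemma \ref{lem_preliminaire} is what makes the periodization step quantitative.

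\textbf{Truncation.} Choose $\chi\in\cD(\R)$ with $\chi\equiv 1$ on $[-1,1]$, set $\chi_R(y_2):=\chi(y_2/R)$ and $u_R:=\chi_R u$. I would first show $u_R\to u$ in $\Hmad(Z)$ as $R\to\infty$. By Leibniz, for each multi-index $\lambda$ with $|\lambda|\le m$,
$$
\partial^\lambda(u-u_R)\;=\;(1-\chi_R)\,\partial^\lambda u\;-\;\sum_{0<j\le\lambda_2}\binom{\lambda_2}{j}R^{-j}\chi^{(j)}(y_2/R)\,\partial^{\lambda-je_2}u.
$$
The first term tends to $0$ in the appropriate weighted $L^2$-norm by dominated convergence. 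Each remainder term is supported on the annulus $\{R\le|y_2|\le 2R\}$, where $\rho\sim R$, so the prefactor $R^{-j}$ is compensated by a factor $\rho^j$ that shifts the weight $\rho^{-m+|\lambda|}$ attached to $\lambda$ into the weight $\rho^{-m+|\lambda|-j}$ attached to $\lambda-je_2$ in the $\Hmad$-norm (inclusions \eqref{inclusions_poids}, or \eqref{inclusions_poids_log} in the critical case). What remains is the tail of a component of $\|u\|_{\Hmad(Z)}$, which vanishes as $R\to\infty$.

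\textbf{Mollification and periodization.} Fix $R$. Since $\overline{\omega}\theta=1$ and $u_R$ is $y_1$-periodic, an immediate computation gives $u_R=\overline{\omega}(\theta u_R)$. Now $\theta u_R$ has compact support $K\subset\R^2$ and lies in $H^m(\R^2)$ (on a compact set the $\Hmad$ and $H^m$ norms are equivalent because the weights are bounded above and below). Standard mollification on $\R^2$ produces $\Phi_\varepsilon\in\cD(\R^2)$, supported in a fixed neighborhood $K'$ of $K$, such that $\Phi_\varepsilon\to\theta u_R$ in $H^m(\R^2)$. Then $\overline{\omega}\Phi_\varepsilon\in\cDd(Z)$ and Lemma \ref{lem_preliminaire} yields
$$
\|\overline{\omega}\Phi_\varepsilon-u_R\|_{H^m(Z)}\;=\;\|\overline{\omega}(\Phi_\varepsilon-\theta u_R)\|_{H^m(Z)}\;\le\;N(K')\,\|\Phi_\varepsilon-\theta u_R\|_{H^m(\R^2)}\;\longrightarrow\;0.
$$
Because all these functions have $y_2$-support contained in a fixed compact set (independent of $\varepsilon$), the $H^m(Z)$ and $\Hmad(Z)$ norms are equivalent on them (with $R$-dependent constants), so $\overline{\omega}\Phi_\varepsilon\to u_R$ in $\Hmad(Z)$. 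A diagonal extraction in $R$ and $\varepsilon$ then concludes.

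\textbf{Main obstacle.} The delicate point is the truncation step in the critical regime $\alpha\in\{1/2,\ldots,m-1/2\}$, where the norm splits at $|\lambda|=k(m,\alpha)$ and a logarithmic factor $(\ln(1+\rho^2))^{-1}$ is inserted for low-order derivatives. One must verify that when the index $|\lambda-je_2|$ crosses the critical threshold the weight-trading on the annulus still produces the right tail; this is ultimately accommodated by the slow variation of $\ln(1+\rho^2)$ on $\{R\le|y_2|\le 2R\}$, but requires careful case-splitting on $(|\lambda|,|\lambda|-j)$ with respect to $k(m,\alpha)$.
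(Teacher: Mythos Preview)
Your mollification/periodization step matches the paper's argument and is correct. The gap is in the truncation step for the critical exponents $\alpha\in\{1/2,\dots,m-1/2\}$: the dilated cutoff $\chi(y_2/R)$ does not work there, and no case-splitting rescues it.

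Take $m=1$, $\alpha=1/2$ (so $k(1,1/2)=0$) and $u\equiv 1$. This function lies in $H^1_{1/2,\#}(Z)$ since $\int_\R \rho^{-1}(\ln(1+\rho^2))^{-2}\,dy_2<\infty$ and $\nabla u=0$. The only remainder term in $\partial_2(u-u_R)$ is $-R^{-1}\chi'(y_2/R)$, and
\[
\bigl\|R^{-1}\chi'(\cdot/R)\bigr\|_{L^2_{1/2}(Z)}^2 \;\gtrsim\; R^{-2}\int_R^{2R}\rho(y_2)\,dy_2 \;\sim\; 1,
\]
which does \emph{not} tend to zero. More generally, whenever $|\lambda|>k$ but $|\lambda|-j\le k$, the weight-trading you describe produces $\|\rho^{-m+|\lambda|-j}\partial^{\lambda-je_2}u\|_{L^2_\alpha(\text{annulus})}$, which is bounded only by $\ln(1+R^2)$ times a tail of the $\Hmad$-norm; since that tail itself is typically of order $1/\ln R$, the product is merely bounded. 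The ``slow variation of $\ln(1+\rho^2)$ on the annulus'' you invoke is irrelevant: the problem is the \emph{size} of the logarithm, not its oscillation.

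The paper's remedy is to replace $\chi(y_2/R)$ by the logarithmic cutoff
\[
\Phi_\ell(t)=\Phi\!\left(\frac{\ell}{\ln t}\right)\qquad(t>1),
\]
supported on $[e^{\ell/2},e^\ell]$, whose derivatives satisfy $|\Phi_\ell^{(\lambda)}(t)|\le C\,(1+t^2)^{-\lambda/2}(\ln(2+t^2))^{-1}$ with $C$ independent of $\ell$. The extra inverse logarithm exactly matches the logarithmic weight on the low-order terms of the $\Hmad$-norm, so every remainder is dominated by a genuine tail of $\|u\|_{\Hmad(Z)}$ and hence goes to zero. With this single modification your scheme becomes the paper's proof; in the non-critical case your cutoff would have been fine.
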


\begin{proof}
\noindent The idea of the proof comes from \cite{AmGiGiI.94} and \cite{Rapport_Nguetseng}. Let $u$ be in $\Hmad(Z)$.
\begin{enumerate}[(i)]
\item  We first approximate $u$ by functions with compact support in the $y_2$ direction.
Let $\Phi\in\cC^\infty([0,\infty[)$ such that $\Phi(t)=0$ for $0\leq t\leq1$, $0\leq\Phi(t)\leq1$,
for $1\leq t\leq2$ and $\Phi(t)=1$, for $t\geq2$. For $\ell\in\N$, we introduce the function $\Phi_\ell$, defined by
\begin{equation}
\label{def_phi}
\Phi_\ell(t)=\left\{\begin{aligned}
&\displaystyle\Phi\left(\frac{\ell}{\ln t}\right)& \text{ if }t>1,\\
&1&\mbox{ otherwise}.
\end{aligned}\right.
\end{equation}
Note that we have $\Phi_\ell(t)=1$ if $0\leq t\leq e^{\ell/2}$, $0\leq\Phi_\ell(t)\leq1$ if $e^{\ell/2}\leq t\leq e^\ell$
and $\Phi_\ell(t)=0$ if $t\geq e^\ell$. Moreover, for all $\ell\geq2$, $t\in[e^{\ell/2},e^\ell]$, $\lambda\in\N$, 
owing that $t\leq\sqrt{1+t^2}\leq\sqrt{2}\,t$ and $\ln t\leq\ln(2+t^2)\leq3\ln t$, we prove that (see \cite{AmGiGiI.94}, Lemma 7.1): 
$$\left|\frac{d^\lambda}{dt^\lambda}\Phi\left(\frac{\ell}{\ln t}\right)\right|\leq \frac{C}{(1+t^2)^{\lambda/2}\ln(2+t^2)},$$
\noindent where $C$ is a constant independent of $\ell$. We set $u_\ell(\y)=u(\y)\Phi_\ell(y_2)$. Then, proceeding as in 
\cite{AmGiGiI.94} (Theorem 7.2), one checks easily that 
$u_\ell$ belongs to $\Hmad(Z)$, has a compact support in the $y_2$ direction, and that $u_\ell$ converges 
to $u$ in $\Hmad(Z)$ as $\ell$ tends to $\infty$.\\ 
Thus, the functions of $\Hmad(Z)$ with compact support in the $y_2$ direction are dense in $\Hmad(Z)$ and, 
we may assume that $u$ has a compact support in the $y_2$ direction.

\item Let $\theta$ be a periodical $\cD(\R)$-partition of unity and let $(\alpha_j)_{j\in\N}$ 
be a sequence such that $\alpha_j\in\cD(\R^2)$, $\alpha_j\geq0$, $\int_{\R^2}\alpha_j(\x)d\x=1$ and the support of $\alpha_j$ is 
included in the closed ball of radius $r_j>0$ and centered at $(0,0)$ where $r_j\to0$ as $j\to\infty$. It is well known that 
as $j\to\infty$, $\alpha_j$ converges in the distributional sense to the Dirac measure. We set
$w(\y)=\theta(y_1)u(\y)$. Then $w$ belongs to $H^m(\R^2)$ and has a compact support. 
Moreover, since $\overline{\omega}\theta=1$, we have $\overline{\omega}w=\overline{\omega}(\theta u)=(\overline{\omega}\theta)u=u$. We define 
$\varphi_j=w*\alpha_j$. Then $\varphi_j$ belongs to $\cD(\R^2)$ and converges to $w$ in $H^m(\R^2)$ as $j$ tends to $\infty$.
Let $\psi_j=\overline{\omega}\varphi_j$, then $\psi_j$ belongs to $\cDd(Z)$ and thanks to Lemma \ref{lem_preliminaire},
$\psi_j$ converges to $\overline{\omega}\,w=u$ in $\Hmad(Z)$ as $j$ tends to~$\infty$.    
\end{enumerate}
\end{proof}
\medskip
\noindent The above proposition implies that the dual space of $\Hmad(Z)$ denoted by $H_{-\alpha,\#}^{-m}(Z)$ is 
a subspace of $\cDd'(Z)$.

\section{Weighted Poincar{\'e} estimates}\label{sec.poincare}

\noindent Let $R$ be a positive real number. For $\beta\neq-1$, one uses the standard Hardy estimates:
\begin{equation}
\label{eq.hardy}
\int_R^\infty |f(r)|^2 r^\beta dr \leq \left( \frac{2}{\beta +1} \right)^2 \int_R^\infty |f'(r)|^2 r^{\beta+2} dr,
\end{equation}
while for the specific case when $\beta=-1$, one switches to 
$$
\int_R^\infty \frac{|f(r)|^2}{(\ln r )^2\,r} dr \leq \left(\frac{4}{3}\right)^2  \int_R^\infty |f'(r)|^2\,r dr.
$$

\noindent We now introduce the truncated domain $$Z_R:= ]0,1[\times (]-\infty ,-R[\cup ]R,+\infty[).$$

\noindent Using the above Hardy inequalities in the $y_2$ direction, we can easily prove the following lemma.

\begin{lem}\label{lemme_3.2}
Let $\alpha$, $R>1$ be real numbers and let $m\geq1$ be an integer. Then there exists a constant $C_\alpha^m$ such that
\begin{equation}
\label{Poincare_y2_ZR}
\forall\varphi\in\cDd(Z_R),\hspace{3mm} \nrm{\varphi}{\Hmad(Z_R)} \leq C_\alpha^m  \nrm{\partial_2\varphi}{\wsd{m-1}{\alpha}{Z_R}}. 
\end{equation}
As a consequence, we also have 
\begin{equation}
\label{Poincare_ZR}
\forall\varphi\in\cDd(Z_R),\hspace{3mm} \nrm{\varphi}{\Hmad(Z_R)} \leq C_\alpha^m|\varphi|_{\Hmad(Z_R)}.
\end{equation}
\end{lem}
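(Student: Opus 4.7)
The plan is to bootstrap everything from the one-dimensional Hardy inequality \eqref{eq.hardy} applied slice by slice in the vertical variable. Since $R>1$, the weight $\rho(y_2)=(1+y_2^2)^{1/2}$ is equivalent to $|y_2|$ on $Z_R$, so \eqref{eq.hardy} can be rewritten, for any real $s$ with $2s+1\ne 0$ and any $f\in\cDd(Z_R)$, as
$$
\|\rho^s f\|_{L^2(Z_R)}\le C(s,R)\,\|\rho^{s+1}\partial_2 f\|_{L^2(Z_R)},
$$
obtained by applying \eqref{eq.hardy} to the slice $f(y_1,\cdot)$ on each half-line $(R,+\infty)$ and $(-\infty,-R)$ --- where $f(y_1,\cdot)$ has compact support and in particular vanishes at both endpoints --- and then squaring and integrating in $y_1\in[0,1]$ by Fubini. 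In the excluded case $2s+1=0$ the logarithmic Hardy inequality stated just after \eqref{eq.hardy} plays the same role, and the logarithmic factor it produces is exactly the weight $(\ln(1+\rho^2))^{-1}$ grafted onto the definition of $\Hmad$ in the critical range $\alpha\in\{1/2,\ldots,m-1/2\}$.

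For \eqref{Poincare_y2_ZR}, I would then go through the sum defining $\|\varphi\|^2_{\Hmad(Z_R)}$ term by term. For each multi-index $\lambda$ contributing a term $\|\rho^{-m+|\lambda|}\partial^\lambda\varphi\|_{L^2_\alpha(Z_R)}$ (up to the logarithmic correction in the critical cases), I would apply the shifted Hardy above to $f=\partial^\lambda\varphi$ with $s=\alpha-m+|\lambda|$. Each such application produces the quantity $\|\rho^{-(m-1)+|\lambda|}\partial^\lambda(\partial_2\varphi)\|_{L^2_\alpha(Z_R)}$, which is exactly the $\lambda$-indexed summand in $\|\partial_2\varphi\|^2_{\wsd{m-1}{\alpha}{Z_R}}$. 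Summing over the finite family of relevant multi-indices gives \eqref{Poincare_y2_ZR}.

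Once \eqref{Poincare_y2_ZR} is in hand, the consequence \eqref{Poincare_ZR} is obtained by iteration: apply \eqref{Poincare_y2_ZR} successively to $\varphi$, $\partial_2\varphi$, $\partial_2^2\varphi$, \ldots; after $m$ steps the right-hand side collapses to $\|\partial_2^m \varphi\|_{L^2_\alpha(Z_R)}$, which is trivially bounded by $|\varphi|_{\Hmad(Z_R)}$ since $\partial_2^m \varphi$ is one of the $m$-th order derivatives summed to form that semi-norm.

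The main delicate point is the critical range $\alpha\in\{1/2,\ldots,m-1/2\}$: one has to verify term by term that, whenever the Hardy exponent $s=\alpha-m+|\lambda|$ sits at the forbidden value $-1/2$, the logarithmic Hardy inequality is invoked and produces precisely the weight $(\ln(1+\rho^2))^{-1}$ already attached to the corresponding term in the definition of $\Hmad$. This exponent-by-exponent matching of critical weights is the only non-mechanical part of the argument; everything else is a bookkeeping application of Hardy's inequality combined with the definition of $\Hmad(Z_R)$.
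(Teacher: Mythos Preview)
Your strategy---Hardy in the vertical variable, slice by slice, term by term---is precisely what the paper has in mind; it gives no more detail than ``using the above Hardy inequalities in the $y_2$ direction''. So the approach matches.

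There is, however, a genuine gap in the bookkeeping for the top-order terms. When $|\lambda|=m$, you claim that Hardy applied to $f=\partial^\lambda\varphi$ produces ``the $\lambda$-indexed summand in $\|\partial_2\varphi\|_{H^{m-1}_{\alpha,\#}(Z_R)}$''. But that norm only runs over multi-indices of order $\le m-1$; there \emph{is} no $\lambda$-indexed summand when $|\lambda|=m$. If $\lambda_2\ge 1$ you can repair this without Hardy, since $\partial^\lambda\varphi=\partial^{\lambda-e_2}(\partial_2\varphi)$ is already a top-order term of $\|\partial_2\varphi\|_{H^{m-1}_{\alpha,\#}}$. The trouble is the purely horizontal derivative $\lambda=(m,0)$: the quantity $\|\partial_1^m\varphi\|_{L^2_\alpha(Z_R)}$ simply cannot be controlled by $\|\partial_2\varphi\|_{H^{m-1}_{\alpha,\#}(Z_R)}$. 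Take $\varphi_N(y_1,y_2)=\psi(y_2)\sin(2\pi N y_1)$ with $\psi\in\cD((R,2R))$ fixed; then $\|\partial_2\varphi_N\|_{H^{m-1}_{\alpha,\#}}$ is bounded independently of $N$ while $\|\partial_1^m\varphi_N\|_{L^2_\alpha}\sim N^m$. So \eqref{Poincare_y2_ZR}, read literally, fails already for $m=1$, and no rearrangement of Hardy will save your step for $\lambda=(m,0)$.

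The consequence \eqref{Poincare_ZR} is unaffected by this, because the seminorm on the right \emph{does} contain $\|\partial_1^m\varphi\|_{L^2_\alpha}$. Your iteration scheme proves it cleanly once you run it directly: apply Hardy to each term $\|\rho^{\alpha-m+|\lambda|}\partial^\lambda\varphi\|_{L^2}$ with $|\lambda|\le m-1$ to bound it by $\|\rho^{\alpha-m+|\lambda|+1}\partial^{\lambda+e_2}\varphi\|_{L^2}$, and repeat until every term has order $m$; the critical cases are handled exactly as you describe. Just do not route this through \eqref{Poincare_y2_ZR} as written.
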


\noindent For the particular case when $\alpha\in\{1/2,...,m-1/2\}$, \eqref{Poincare_ZR} cannot hold without introducing logarithmic weights in the definition of the space $\wsd{m}{\alpha}{Z}$.


\noindent Proceeding as in \cite{AmGiGiI.94} (Theorem 8.3), we have the  Poincar{\'e}-type inequalities :
\begin{thm}
\label{thm_Poincare}
Let $\alpha$ be a real number and $m\geq1$ an integer. Let $j:=\min(q(m,\alpha),m-1)$
where $q(m,\alpha)$ is defined by \eqref{def_q}. 
Then there exists a constant $C>0$, such that for any $u\in\Hmad(Z)$, we have 
\begin{equation}
\label{Poincare}
\inf_{\lambda\in\PP_j'}\|u+\lambda\|_{\Hmad(Z)}\leq C|u|_{\Hmad(Z)}.
\end{equation}
In other words, the semi-norm $|.|_{\Hmad(Z)}$ defines on $\Hmad(Z)/\PP_j'$
a norm which is equivalent to the quotient norm. 
\end{thm}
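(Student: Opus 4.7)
The plan is to prove \eqref{Poincare} by induction on $m$, following the scheme of Theorem 8.3 in \cite{AmGiGiI.94}, with Lemma \ref{lemme_3.2} providing the quantitative control at infinity and classical Poincaré--Wirtinger providing the control on bounded cylinders.

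For the base case $m=1$, given $u\in\Had(Z)$ with finite seminorm, fix $R>1$ and a smooth cut-off $\chi\in\cC^\infty(\R)$ equal to $1$ for $|y_2|\geq R+1$ and vanishing for $|y_2|\leq R$. Approximating $\chi u$ by elements of $\cDd(Z_R)$ via the density argument of Proposition \ref{prop_densite} adapted to $Z_R$, Lemma \ref{lemme_3.2} yields
$$\|\chi u\|_{\Had(Z_R)}\leq C\bigl(|u|_{\Had(Z)}+\|\chi' u\|_{\Lad(Z)}\bigr),$$
where the cross term is localized in the compact annulus $\{R\leq|y_2|\leq R+1\}$. On the bounded complementary cylinder $[0,1]\times[-R-1,R+1]$, the classical Poincaré--Wirtinger inequality for $y_1$-periodic functions furnishes a constant $c$ such that $\|u-c\|_{H^1}\leq C|u|_{H^1}$. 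Absorbing the cross term into the bounded-piece estimate and summing yields \eqref{Poincare}; when $q(1,\alpha)=-1$ (i.e.\ $\alpha\geq 1/2$) no constant is subtracted, consistent with $j=-1$, because $\Lad(Z)$ excludes nonzero constants.

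For the induction step, assume \eqref{Poincare} is valid at level $m-1$ for every real $\alpha$. Given $u\in\Hmad(Z)$, each $\partial_i u$ belongs to $H^{m-1}_{\alpha,\#}(Z)$ with $|\partial_i u|_{H^{m-1}_{\alpha,\#}(Z)}\leq|u|_{\Hmad(Z)}$, so by induction there exist polynomials $\mu_1,\mu_2$ in $y_2$ alone, of degree at most $\min(q(m-1,\alpha),m-2)$, satisfying
$$\|\partial_i u+\mu_i\|_{H^{m-1}_{\alpha,\#}(Z)}\leq C|u|_{\Hmad(Z)},\qquad i=1,2.$$
By $y_1$-periodicity, $\int_0^1\partial_1 u(y_1,y_2)\,dy_1=0$ for almost every $y_2$; averaging the $i=1$ inequality in $y_1$ therefore forces $\mu_1=0$. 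Defining $\lambda(y_2)$ as the $y_2$-antiderivative of $-\mu_2$ vanishing at the origin, one has $\deg\lambda\leq \min(q(m-1,\alpha),m-2)+1\leq \min(q(m,\alpha),m-1)=j$, so $\lambda\in\PP'_j$. The derivatives $\partial_i(u+\lambda)$ are then controlled in $H^{m-1}_{\alpha,\#}(Z)$, and a final Hardy-type bound on $u+\lambda$ itself (a re-application of the base case with weight exponent shifted by $m-1$) supplies the missing bottom-order term, closing the induction.

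The main obstacle is the critical range $\alpha\in\{1/2,\dots,m-1/2\}$: the logarithmic factor enters the definition of $\Hmad(Z)$ and the ordinary Hardy inequality must be replaced by its $\beta=-1$ variant. That logarithmic weight must be tracked consistently through every step of the induction, with particular care at the transition indices where $k(m,\alpha)$ jumps. The cap $j=\min(q(m,\alpha),m-1)$ built into the statement is essential too, since it guarantees $\partial^{\beta}\lambda\equiv 0$ for $|\beta|=m$, so that replacing $u$ by $u+\lambda$ leaves the seminorm $|u|_{\Hmad(Z)}$ on the right-hand side of \eqref{Poincare} unchanged.
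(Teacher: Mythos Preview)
Your overall plan---induction on $m$, with Lemma \ref{lemme_3.2} supplying the Hardy control on $Z_R$ and Poincar\'e--Wirtinger handling the bounded cylinder---is exactly the scheme of \cite{AmGiGiI.94}, Theorem 8.3, which is what the paper invokes. The induction step is essentially correct (modulo a sign: you want $\lambda'=\mu_2$, not $-\mu_2$, so that $\partial_2(u+\lambda)=\partial_2 u+\mu_2$ matches the bound you obtained).

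The genuine gap is in the base case $m=1$ when $j=-1$, i.e.\ $\alpha>1/2$. Your gluing argument applies Lemma \ref{lemme_3.2} to $\chi u$ and produces the cross term $\|\chi' u\|_{L^2_\alpha}$, supported on the annulus $\{R\le|y_2|\le R+1\}$. When $j=0$ you absorb it by replacing $u$ with $u-c$ (the Poincar\'e--Wirtinger constant) and bounding $\|\chi'(u-c)\|$ via the bounded-piece estimate; but when $j=-1$ no constant is available, and nothing in your argument bounds $\|u\|_{L^2(\text{annulus})}$ by $|u|_{\Had(Z)}$. The remark ``$\Lad(Z)$ excludes nonzero constants'' explains \emph{why} $j=-1$ but does not close the estimate. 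The standard remedy is a Peetre--Tartar compactness step: your gluing already gives
\[
\|u\|_{\Had(Z)}\le C\bigl(|u|_{\Had(Z)}+\|u\|_{L^2(Z^{R+1})}\bigr),
\]
the restriction $u\mapsto u|_{Z^{R+1}}$ is compact from $\Had(Z)$ to $L^2(Z^{R+1})$ by Rellich, and the seminorm vanishes only on $\PP'_j=\{0\}$ when $j=-1$; Peetre's lemma then yields $\|u\|_{\Had(Z)}\le C|u|_{\Had(Z)}$. Alternatively, for $\alpha>1/2$ one can prove a global Hardy inequality $\|\rho^{\alpha-1}\varphi\|_{L^2(Z)}\le C\|\rho^\alpha\partial_2\varphi\|_{L^2(Z)}$ directly for $\varphi\in\cDd(Z)$ (using $\rho^{-\alpha}\in L^2(\R)$ to control the bounded part), and then extend by density.
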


\renewcommand\Re{\operatorname{\mathfrak{Re}}}
\renewcommand\Im{\operatorname{\mathfrak{Im}}}

\section{The mixed Fourier transform (MFT) and the Green function}\label{sec.fourier}

\noindent The purpose of this section is twofold: \begin{enumerate}[(i)]
\item we look for the fundamental solution for the Laplace equation in $Z$,
\item we estimate the convolution with this solution in weighted Sobolev spaces. 
\end{enumerate}
To achieve this goal, we define the MFT and an adequate functional setting. 

\subsection{Rapidly decreasing functions}
Let set $\Gamma:=\Z\times\RR$ and let us write the locally convex linear topological spaces ~:
$$
\begin{aligned}
\tS(\Gamma) := &\left\{ \ti{\varphi} : \Z \times \RR \to \RR  \; \text{ s.t. } \forall k \in \Z \quad  \ti{\varphi}(k,\cdot) \in \cS (\R) \vphantom{ \int_\R^{100} } \right. \\
& \left. \text{ and } \sup_{ k \in \Z, l \in \R } \left| k^\alpha l^\beta \partial^\gamma_{l^\gamma} \ti{\varphi}(k,l) \right| < \infty, \, \forall (\alpha,\beta,\gamma) \in \N^3 \right\}.
\end{aligned}
$$ 
The space $\tS(\Gamma)$ is endowed with the semi-norms
$$
\snrm{ \ti{\varphi}}{\alpha,\beta,\gamma} := \sup_{ k \in \Z, l \in \R } \left| k^{\alpha'} l^{\beta'} \partial^{\gamma'}_{l^{\gamma'}} \ti{\varphi} \right| ,\quad \forall\alpha'\leq\alpha,\,\beta'\leq\beta,\,\gamma'\leq\gamma.
$$
We define also 
$$
l^2(\Z;L^2(\RR)) := \left\{ u \in \tS'(\Gamma) \text{ s.t. }\sum_{k \in \Z} |u(k,\cdot)|^2_{L^2(\RR)} < \infty  \right\} .
$$
\begin{proposition}\label{prop.density} $\tS(\Gamma)$ is dense in $l^2(\Z;L^2(\RR))$.
\end{proposition}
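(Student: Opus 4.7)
The plan is a standard two-step truncate-and-smooth argument, using the fact that $\cS(\R)$ is already dense in $L^{2}(\R)$. Let $u \in l^{2}(\Z;L^{2}(\R))$ and fix $\varepsilon>0$. I will construct $\ti\varphi \in \tS(\Gamma)$ with $\|u - \ti\varphi\|_{l^{2}(\Z;L^{2}(\R))}<\varepsilon$.

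First I would truncate in the discrete variable. Because $\sum_{k\in\Z}\|u(k,\cdot)\|_{L^{2}(\R)}^{2}<\infty$, there exists $N\in\N$ such that
$$
\sum_{|k|>N}\|u(k,\cdot)\|_{L^{2}(\R)}^{2}<\varepsilon^{2}/4.
$$
Set $u_{N}(k,l):=u(k,l)\,\mathbf{1}_{\{|k|\leq N\}}(k)$; then $\|u-u_{N}\|_{l^{2}(\Z;L^{2}(\R))}<\varepsilon/2$.

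Next, for each $k$ with $|k|\leq N$ (only finitely many indices), I use the classical density of $\cS(\R)$ in $L^{2}(\R)$ to pick $\varphi_{k}\in\cS(\R)$ with $\|u(k,\cdot)-\varphi_{k}\|_{L^{2}(\R)}<\varepsilon/(2\sqrt{2N+1})$. For $|k|>N$ set $\varphi_{k}\equiv 0$, and define
$$
\ti\varphi(k,l):=\varphi_{k}(l),\qquad (k,l)\in\Z\times\R.
$$
Summing over the $2N+1$ non-zero indices gives $\|u_{N}-\ti\varphi\|_{l^{2}(\Z;L^{2}(\R))}<\varepsilon/2$, so by the triangle inequality $\|u-\ti\varphi\|_{l^{2}(\Z;L^{2}(\R))}<\varepsilon$.

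It remains to verify $\ti\varphi\in\tS(\Gamma)$. Since $\ti\varphi(k,\cdot)\equiv 0$ for $|k|>N$, the function is supported on the finite strip $\{|k|\leq N\}\times\R$, so every factor $k^{\alpha}$ is bounded on the support. For each fixed $|k|\leq N$, $\varphi_{k}\in\cS(\R)$, hence $\sup_{l}|l^{\beta}\partial_{l}^{\gamma}\varphi_{k}(l)|<\infty$ for every $(\beta,\gamma)\in\N^{2}$. Taking the maximum over the finite set $|k|\leq N$ shows
$$
\snrm{\ti\varphi}{\alpha,\beta,\gamma}\leq N^{\alpha}\max_{|k|\leq N}\sup_{l\in\R}\bigl|l^{\beta}\partial_{l}^{\gamma}\varphi_{k}(l)\bigr|<\infty,
$$
for every $(\alpha,\beta,\gamma)\in\N^{3}$, and also $\ti\varphi(k,\cdot)\in\cS(\R)$ for every $k\in\Z$. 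Thus $\ti\varphi$ belongs to $\tS(\Gamma)$, completing the argument. There is no real obstacle here: the only delicate point is to make the approximation compactly supported in $k$ so that the polynomial weights $k^{\alpha}$ cause no trouble, and the construction above does precisely that.
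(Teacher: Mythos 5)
Your proof is correct and follows essentially the same route as the paper's: truncate to finitely many indices $k$ using the square-summability of $\|u(k,\cdot)\|_{L^2(\R)}$, then approximate each remaining slice by a smooth rapidly decreasing function (the paper uses $\cD(\R)$ where you use $\cS(\R)$, an immaterial difference). Your explicit verification of the seminorm bounds for $\ti\varphi$ is a welcome detail the paper dismisses as trivial.
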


\begin{proof}
As $u \in l^2(\Z;L^2(\RR))$ 
$$
\forall \e > 0 \quad \exists k_0 > 0 \text{ s.t. } \sum_{|k| > k_0} \nrm{u(k,\cdot)}{L^2(\RR)}^2 < \frac{\e}{2}.
$$
We set
$$
u^\delta := \begin{cases}
u^\delta (k,\cdot) &  \text{ if } |k| < k_0, \\
0&  \text{ otherwise } ,
\end{cases}
$$ 
where $u^\delta (k,\cdot)$ is  a standard smooth approximation  of $u(k,\cdot)$ in $\D(\R)$.
Then, we choose $\delta(k_0)$ s.t. 
$$
\forall k \st |k| \leq k_0 \quad \nrm{u(k,\cdot)-u^\delta (k,\cdot)}{L^2(\RR)}^2 < \frac{\e}{2 k_0},
$$
which finally gives
$$
\nrm{u-u^\delta}{ l^2(\Z;L^2(\RR))}^2 \leq \sum_{|k|\leq k_0} \nrm{u(k,\cdot)-u^\delta (k,\cdot)}{L^2(\RR)}^2  
+ \sum_{|k| > k_0} \nrm{u(k,\cdot)}{L^2(\RR)}^2< \e
$$
which proves the claim, since $u^\delta$ trivially belongs to $\tS(\Gamma)$.
\end{proof}

\begin{defi}\label{def.ff} 
We define the MFT operator $\cF:\cSd(Z) \to \tS(\Gamma)$ as
$$
\cF(\varphi)(k,l) := \int_{Z} \varphi ( \y ) e^{-i (\ti{k} y_1 + \ti{l} y_2 )} d\y, \quad \forall (k,l) \in \Gamma,
$$
where $\ti{k}:= 2 \pi k$, and the same holds for $\ti{l}$.
\end{defi}

We list below some basic tools  concerning the MFT 
needed in the remaining of the paper. 
They can be proved following classical
arguments (\cite{Book.Yos}, \cite{Rudin.Book.1975}, \cite{Book.ReSi}).

\begin{proposition}\label{prop.fourier.s}
\begin{enumerate}[(i)]
\item  The operator $\cF:\cSd(Z) \to \tS(\Gamma)$ is an isomorphism and the inverse operator is explicit:
$$
\cF^{-1} (\ti{\varphi})(\y) = \sum_{k\in \Z} \int_{\RR} \ti{\varphi}(k,l) e^{ i (\ti{k} y_1+ \ti{l} y_2)} dl, \quad \forall \ti{\varphi} \in \tS(\Gamma),{\nobreakspace}\quad \forall \y \in Z .
$$
\item 	One has in a classical way, for any $(f,g) \in \cSd(Z)\times\cSd(Z)$, that
$$
\int_{Z} f \cdot \ov{g}\,d \y 
= \sum_k \int_{\RR} \cF(f)(k,l) \cdot \ov{\cF(g)}(k,l) dl ,\text{ and }
 \cF( f * g ) 
 = \cF(f) \cF(g), \quad  \forall (k,l) \in \Gamma.
$$
\end{enumerate}

\end{proposition}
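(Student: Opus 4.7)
The plan is to treat (i) and (ii) as standard consequences of combining a one-dimensional Fourier series in the periodic variable $y_1$ with the one-dimensional Fourier transform in the Schwartz variable $y_2$. The first step is to verify that $\cF$ maps $\cSd(Z)$ into $\tS(\Gamma)$. For $\varphi\in\cSd(Z)$ and $(\alpha,\beta,\gamma)\in\N^3$, I would integrate by parts $\alpha$ times in $y_1$ (all boundary contributions cancel by $1$-periodicity) and $\beta$ times in $y_2$ (no boundary terms by Schwartz decay), and differentiate $\gamma$ times under the integral in $l$, which brings down a factor $(-iy_2)^\gamma$. This yields
$$
(i\ti{k})^{\alpha}(i\ti{l})^{\beta}\partial_l^\gamma \cF(\varphi)(k,l)
= \int_{Z} \partial_{y_1}^{\alpha}\partial_{y_2}^{\beta}\bigl((-iy_2)^\gamma\varphi(\y)\bigr)\, e^{-i(\ti{k} y_1+\ti{l} y_2)}\,d\y,
$$
which is bounded uniformly in $(k,l)\in\Gamma$ by the $L^1(Z)$-norm of the integrand. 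That norm is finite since $\varphi$ and all its derivatives decay faster than any polynomial in $y_2$; this proves $\cF(\varphi)\in\tS(\Gamma)$ and simultaneously yields continuity of $\cF$ for the respective seminorm topologies.

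Next I would verify the inversion formula. For fixed $y_2\in\R$, the map $y_1\mapsto\varphi(\y)$ is smooth and $1$-periodic, hence expands as a uniformly convergent Fourier series $\varphi(\y)=\sum_{k\in\Z} c_k(y_2) e^{i\ti{k} y_1}$ with coefficients $c_k(y_2)=\int_0^1\varphi(y_1,y_2)e^{-i\ti{k} y_1}\,dy_1$. Each $c_k$ inherits membership in $\cS(\R)$ from $\varphi$, and $\cF(\varphi)(k,\cdot)$ is exactly its one-dimensional Fourier transform. Applying the classical $1$D Fourier inversion theorem in $y_2$ to each $c_k$ and resumming recovers
$$
\varphi(\y) = \sum_{k\in\Z}\int_{\R} \cF(\varphi)(k,l)\,e^{i(\ti{k} y_1+\ti{l} y_2)}\,dl,
$$
the series and integral converging absolutely by the $\tS(\Gamma)$ bounds just established. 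Running the same argument in the opposite direction gives $\cF\circ\cF^{-1}=\mathrm{Id}$ on $\tS(\Gamma)$, so $\cF$ is a bijection; bicontinuity follows at once from the seminorm estimates.

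For (ii), I would substitute the inversion formula for $\overline{g}$ into $\int_Z f\overline{g}\,d\y$, exchange $\sum_k\int_{\R}$ with $\int_Z$ (Fubini is licit by the rapid decay of $\cF(g)$), and use the orthogonality $\int_0^1 e^{2i\pi(k-k')y_1}\,dy_1=\delta_{k,k'}$ together with one-dimensional Plancherel in $y_2$ to conclude. The convolution identity is a direct Fubini computation: for $f,g\in\cSd(Z)$ the change of variable $\y\mapsto\y-\z$ preserves the $1$-periodicity in $y_1$, and the phase factorises as $e^{-i(\ti{k} y_1+\ti{l} y_2)}=e^{-i(\ti{k}(y_1-z_1)+\ti{l}(y_2-z_2))}\cdot e^{-i(\ti{k} z_1+\ti{l} z_2)}$, giving $\cF(f*g)=\cF(f)\cF(g)$ pointwise on $\Gamma$. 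The only delicate point throughout is to justify the repeated interchanges of summation and integration; this is precisely what the Schwartz-type seminorms on $\tS(\Gamma)$ and $\cSd(Z)$ are designed to handle, so no deeper obstacle is anticipated.
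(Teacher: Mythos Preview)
Your proposal is correct. The paper itself gives no proof of this proposition at all: it merely states that the listed properties ``can be proved following classical arguments'' and refers the reader to standard Fourier analysis texts (Yosida, Rudin, Reed--Simon). Your argument---factoring the MFT as a Fourier series in the periodic variable $y_1$ composed with a continuous Fourier transform in the Schwartz variable $y_2$, and then invoking the classical $1$D inversion, Plancherel, and convolution theorems in each factor---is precisely the ``classical argument'' those references contain, so you have simply written out what the authors chose to suppress.
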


\subsection{Tempered distributions and MFT}

In a quite natural manner we extend the concepts introduced above to  tempered distributions.
\begin{defi}\label{defi.fourier.distrib}
\begin{enumerate}[(i)]
\item A linear form $\ti{T}$ acting on $\tS(\Gamma)$ is s.t. $\exists (\alpha ,\beta ,\gamma ) \in \N^3$ 
$$
\left| \ti{T} (\ti{\varphi}) \right| \leq C \snrm{ \ti{\varphi}}{\alpha,\beta,\gamma}, \quad \forall \ti{\varphi} \in \tS(\Gamma).
$$
\item The MFT applied to a   tempered distribution $T \in \cSd'(Z)$ is defined as
$$
<\cF(T),\ti{\varphi}>_{\tS',\tS} = \cF(T)(\ti{\varphi}) := T(\cF^T(\ti{\varphi})) = <T, \cF^T(\ti{\varphi})>_{\cSd'\times\cSd},\quad \forall \ti{\varphi} \in \tS(\Gamma),
$$
where
$$
\cF^T (\ti{\varphi}) := \sum_{k\in \Z} \int_{\RR} \ti{\varphi}(k,l) e^{ - i (\ti{k} y_1+ \ti{l} y_2)} dl, \quad \forall \ti{\varphi} \in \tS(\Gamma),{\nobreakspace}\quad \forall \y \in Z. 
$$
In  the same way, we define the reciprocal operator denoted $\cF^{-1}$ which associates to each distribution $\ti{T}\in \tS'(\Gamma)$ a  distribution $\cF^{-1}(\ti{T})$ s.t.
$$
< \cF^{-1}(\ti{T}), \varphi >_{\cS'\times\cS} := < \ti{T} , \breve{\cF}(\varphi) >_{\tS'\times\tS}
$$
where $\breve{\ti{\varphi}}(k,l)=\ti{\varphi}(-k,-l)$.
\end{enumerate}
\end{defi}

We sum up properties extending classical results of Fourier analysis to our MFT.
\begin{proposition}\label{prop.fourier.dirac}
\begin{enumerate}[(i)]
\item If $T \in \cSd'(Z)$ then $\cF(T) \in \tS'(\Gamma)$. Similarly, if $\ti{T} \in \tS'(\Gamma)$ then $\cF^{-1}(\ti{T}) \in \cSd'(Z)$.
\item   The Dirac measure belongs to $\cSd'(Z)$ and ~:
$$
\cF( \delta_0 ) = 1,\quad \forall k \in \Z ,\quad \text{ a.e. }\,\ell \in \RR.
$$
\item the MFT acts on the derivatives in a polynomial fashion:
$$
\forall T \in \cSd'(Z) ,\quad \cF(\partial^{\alpha} T) = i^{|\alpha|} (\ti{k}^\alpha+\ti{l}^{\alpha}) \cF(T),
$$
for any multi-index $\alpha \in \NN^2$.
\item Plancherel's Theorem : if $f\in L^2(Z)$ then the Fourier transform $\cF(T_f)$ is defined by a function $\cF(f) \in l^2(\Z,L^2(\RR))$ i.e.
$$
\cF(T_f) = T_{\cF(f)} \quad \forall f \in L^2(Z) \text{ and } \nrm{\cF(f)}{l^2(\Z;L^2(\R))} = \nrm{f}{L^2(Z)}.
$$

\end{enumerate}
\end{proposition}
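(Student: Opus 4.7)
The plan is to treat the four items in order, each as a routine transfer from the test-function level to the distributional level via the transpose definition in \ref{defi.fourier.distrib}(ii). The underlying principle throughout is that $\cF^T$ is (up to the involution $\breve{\cdot}$) nothing but $\cF^{-1}$, so its continuity on $\tS(\Gamma)\to\cSd(Z)$ is the same statement as Proposition \ref{prop.fourier.s}(i), just read in the other direction. Assuming this, item (i) is immediate: given $T\in\cSd'(Z)$ bounded by a semi-norm $\|\cdot\|_{\cSd,p}$, the composition $\tilde\varphi\mapsto T(\cF^T\tilde\varphi)$ is bounded by a finite combination of the $\snrm{\cdot}{\alpha,\beta,\gamma}$, so $\cF(T)\in\tS'(\Gamma)$, and the reverse inclusion is treated symmetrically.

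For (ii), a one-line computation suffices: $\langle\cF(\delta_0),\tilde\varphi\rangle = \langle\delta_0,\cF^T\tilde\varphi\rangle = \cF^T(\tilde\varphi)(0,0) = \sum_{k\in\Z}\int_\R \tilde\varphi(k,l)\,dl$, which is exactly the action of the constant distribution $1$ in the $\tS'\times\tS$ duality. For (iii), I would transpose the calculation already valid on $\cSd(Z)$: write $\langle\cF(\partial^\alpha T),\tilde\varphi\rangle = (-1)^{|\alpha|}\langle T,\partial^\alpha\cF^T\tilde\varphi\rangle$ and differentiate $\cF^T\tilde\varphi$ term-by-term under the discrete sum and the integral. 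This is legitimate because multiplying by $\tilde k^{\alpha_1}\tilde l^{\alpha_2}$ keeps the test function inside $\tS(\Gamma)$ (rapid decay in $k$ and Schwartz decay in $l$ survive polynomial factors); the resulting exponent-drop identity is exactly the claim, with the multi-index notation $(\tilde k^{\alpha_1}\tilde l^{\alpha_2})$ understood.

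For (iv), the classical density argument works. Proposition \ref{prop.fourier.s}(ii) specialized to $f=g\in\cSd(Z)$ yields Parseval's identity $\|\cF(f)\|_{l^2(\Z;L^2(\R))} = \|f\|_{L^2(Z)}$. Since $\cD(Z)\subset\cSd(Z)$, Proposition \ref{densite_L2} (at $\alpha=0$) gives density of $\cSd(Z)$ in $L^2(Z)$, and Proposition \ref{prop.density} gives density of $\tS(\Gamma)$ in $l^2(\Z;L^2(\R))$. The MFT therefore extends to a surjective isometry between these two Hilbert spaces. The compatibility $\cF(T_f)=T_{\cF(f)}$ is then obtained by testing both sides against an arbitrary $\tilde\varphi\in\tS(\Gamma)$ and passing to the limit on an approximating sequence in $\cSd(Z)$. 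The main obstacle I foresee is the groundwork behind (i), namely the careful bookkeeping showing that $\cF^T:\tS(\Gamma)\to\cSd(Z)$ is continuous: one must verify the standard smoothness/decay trade-off in the mixed setting, where rapid decay in the discrete variable $k$ produces $\cC^\infty$ regularity (and periodicity) in $y_1$, while Schwartz decay in the continuous variable $l$ produces Schwartz decay in $y_2$. Once this is in place, everything else is essentially formal.
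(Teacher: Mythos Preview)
Your proposal is correct and follows exactly the classical route the paper has in mind: the paper does not give its own proof of this proposition but simply states it as a summary of standard Fourier-analytic facts, referring the reader to \cite{Book.Yos,Rudin.Book.1975,Book.ReSi}. Your transpose-and-density argument is precisely the expected one, and your identification of the only nontrivial point---the continuity of $\cF^T:\tS(\Gamma)\to\cSd(Z)$, which is Proposition~\ref{prop.fourier.s}(i) read backwards---is on target.
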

\subsection{The Green function for  periodic strips}

We aim at solving the fundamental equation:
\begin{equation}\label{eq.green.lap}
- \Delta G = \delta_0 \quad \text{ in } Z,
\end{equation}
stated here in the sense of tempered distributions. 
Then $G$ should satisfy, in $\tS(\Gamma)'$:
$$
(\ti{k}^2 + \ti{l}^2 ) \cF(G) = 1.
$$
\begin{proposition}\label{prop.green.func}
The Green function $G$ solving \eqref{eq.green.lap}
is a tempered distribution and it reads 
$$
G(\y)= \ud \left\{  
  \sum_{k \in \Z^*} \frac{ e^{-|\ti{k}| |y_2| + i \ti{k} y_1} }{|\ti{k}| } 
    - | y_2 | 
\right\} = G_1(\y) + G_2(\y) , \quad \forall \y \in Z,
$$
\noindent where
$$G_1(\y):=\ud\sum_{k \in \Z^*} \frac{ e^{-|\ti{k}| |y_2| + i \ti{k} y_1} }{|\ti{k}| }\,\,\,\mbox{ and }\,\,\,G_2(\y):=-\ud|y_2|.$$ 
Moreover, one has $G_1 \in L^1(Z)\cap L^2(Z)$ and thus $G\in L^2_{\loc}(Z)$. 
The Green function can be written in a more compact expression :
$$
G(\bfy)=  - \frac{1}{4 \pi } \ln ( 2 \left( \cosh ( 2 \pi y_2 ) - \cos( 2 \pi y_1 )\right) ).
$$
Notice that the Green function is odd with respect to $\y$.
\end{proposition}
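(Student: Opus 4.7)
The plan is to solve the fundamental equation \eqref{eq.green.lap} directly on the Fourier side, then invert mode by mode. Applying the MFT (Proposition \ref{prop.fourier.dirac}), the equation $-\Delta G=\delta_0$ becomes $(\tilde k^2+\tilde l^2)\,\cF(G)(k,l)=1$ in $\tS'(\Gamma)$. Because the symbol $\tilde k^2+\tilde l^2$ vanishes only at $(k,l)=(0,0)$, I would split the solution into a non-zero-mode part $G_1$ and a zero-mode part $G_2$. For $k\neq0$, the symbol is bounded away from zero and one can legitimately set $\cF(G_1)(k,l):=(\tilde k^2+\tilde l^2)^{-1}$. For each fixed $k\neq 0$, applying the one-dimensional inverse Fourier identity $\cF^{-1}_{l\to y_2}\bigl(1/(a^2+\tilde l^2)\bigr)=e^{-a|y_2|}/(2a)$ with $a=|\tilde k|$, and recognizing that the discrete inverse in the $y_1$ variable is the multiplication by $e^{i\tilde k y_1}$, yields formally the series for $G_1$ claimed in the statement.

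For the zero mode, the projected equation reduces to $-\partial^2_{y_2} G_2=\delta_0(y_2)$ on $\R$; an elementary distributional computation (differentiating $-|y_2|/2$ twice) gives $G_2(\y)=-|y_2|/2$ as a particular solution, and the sum $G:=G_1+G_2$ then satisfies \eqref{eq.green.lap} in $\cSd'(Z)$. To make this rigorous I must also prove convergence of the defining series. For $L^1$, Fubini gives
$$
\|G_1\|_{L^1(Z)}\leq \tfrac12\sum_{k\in\Z^*}\frac{1}{|\tilde k|}\int_\R e^{-|\tilde k||y_2|}\,dy_2=\sum_{k\in\Z^*}\frac{1}{\tilde k^{2}}<\infty.
$$
For $L^2$, orthogonality of $\{e^{i\tilde k y_1}\}_{k\in\Z}$ over $[0,1]$ and Plancherel in $y_2$ yield
$$
\|G_1\|_{L^2(Z)}^2=\sum_{k\in\Z^*}\frac{1}{4\tilde k^2}\int_\R e^{-2|\tilde k||y_2|}\,dy_2=\sum_{k\in\Z^*}\frac{1}{8|\tilde k|^3}<\infty.
$$
Since $G_2$ is locally bounded, this immediately gives $G\in L^2_{\loc}(Z)$; in particular $G\in\cSd'(Z)$, legitimising the formal MFT manipulations above.

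For the closed logarithmic form, I would invoke the Mercator series $\sum_{k\geq1}z^k/k=-\ln(1-z)$ with the choice $z=e^{-2\pi|y_2|+2\pi i y_1}$ (valid since $|z|<1$ whenever $y_2\neq 0$). Summing the $k\geq 1$ and $k\leq -1$ halves of $G_1$ gives $G_1=-(4\pi)^{-1}\ln|1-z|^2$, and rewriting $G_2=-|y_2|/2=-(4\pi)^{-1}\ln e^{2\pi|y_2|}$ lets me absorb it under the same logarithm; the algebraic identity $e^{2\pi|y_2|}|1-z|^2=2\cosh(2\pi y_2)-2\cos(2\pi y_1)$ then yields the claimed compact formula. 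Parity with respect to $\y$ is visible directly from this closed expression, since both $\cosh(2\pi y_2)$ and $\cos(2\pi y_1)$ are even.

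The main obstacle I anticipate is the interplay between the formal Fourier inversion and the convergence of the series: one must justify that termwise inversion in $l$ followed by summation over $k$ is a bona fide inverse MFT in $\cSd'(Z)$, not merely a formal manipulation. The $L^1\cap L^2$ bounds above handle $G_1$, while the zero mode $G_2$ must be verified separately as a distributional solution (this is the genuinely non-trivial piece, as it arises from the non-inversibility of the symbol at the origin).
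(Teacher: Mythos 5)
Your proposal is correct and follows essentially the same route as the paper: mode-by-mode Fourier inversion with the $k=0$ mode split off, the explicit kernel $e^{-|\ti{k}||y_2|}/(2|\ti{k}|)$ for $k\neq0$, Parseval for the $L^2$ bound, and the Mercator series $\ln(1-z)=-\sum_{k\ge1}z^k/k$ with $z=e^{2\pi(-|y_2|+iy_1)}$ for the closed logarithmic form. The only (harmless) variations are that you quote the standard one-dimensional identity $\cF^{-1}\bigl(1/(a^2+\ti{l}^2)\bigr)=e^{-a|y_2|}/(2a)$ where the paper recomputes it by residues, you treat the zero mode by checking $-\partial_{y_2}^2\bigl(-|y_2|/2\bigr)=\delta_0$ directly rather than invoking the formal Fourier transform of $|y|$, and you supply the $L^1$ estimate that the paper asserts without writing out.
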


\begin{proof}
 We define:
$$
G 
= \lim_{N\to \infty} G_N := \lim_{N\to \infty} \sum_{|k|<N} J_k e^{i\ti{k}y_1}, \text{ with } 
J_k  
:= \int_{\R} \frac{e^{i\ti{l}y_2}}{\ti{k}^2 + \ti{l}^2} dl = \lim_{R \to \infty} \int_{-R}^R \frac{e^{i \ti{l}|y_2|}}{\ti{k}^2 + \ti{l}^2} dl .
$$
The absolute value in the last right hand side is added as follows:
$$
\int_{\RR}  \frac{e^{i\ti{l}y_2}}{\ti{k}^2 + \ti{l}^2} dl = \int_{0}^\infty  \frac{e^{i\ti{l}y_2}+e^{-i\ti{l}y_2}}{\ti{k}^2 + \ti{l}^2} dl = 2 \int_0^\infty   \frac{\cos({\ti{l}y_2})}{\ti{k}^2 + \ti{l}^2} dl = 2 \int_0^\infty   \frac{\cos({\ti{l}|y_2|})}{\ti{k}^2 + \ti{l}^2} dl.
$$
Proceeding as in example 1 p. 58 \cite{Book.ReSi}, one has, extending the integral to the complex plane, that
$$
\begin{aligned}
J_k &\equiv \lim_{R \to \infty} \oint_{C_R} \frac{e^{i \ti{z}|y_2|}}{\ti{k}^2 + \ti{z}^2} dz =  \lim_{R \to \infty} \oint_{C_R} \frac{e^{i \ti{z}|y_2|}}{(\ti{z}+i\ti{|k|} )(\ti{z}-i\ti{|k|} )} dz \\
& = \lim_{R \to \infty} \frac{1}{2\pi} \oint_{C_R} \frac{{\mathfrak f}(z)}{z-i|k|} dz = i {\mathfrak f}(i|k| ) = \frac{e^{-|\ti{k}| |y_2|}}{2 |\ti{k} |},
\end{aligned}
$$
where $C_R:= \{ z \in \CC; \; |z| = R \text{ and } \Im(z)> 0\} \cup ([-R;R]\times\{{\nobreakspace}0 \} )$ as depicted in fig. \ref{fig.demi.disque} and 
${\mathfrak f}(z):=e^{i \ti{z}|y_2|} / (\ti{z}+i\ti{|k|} )$, being a holomorphic function inside $C_R$. 
\begin{figure}[ht!] 
\begin{center}
\begin{picture}(0,0)%
\includegraphics{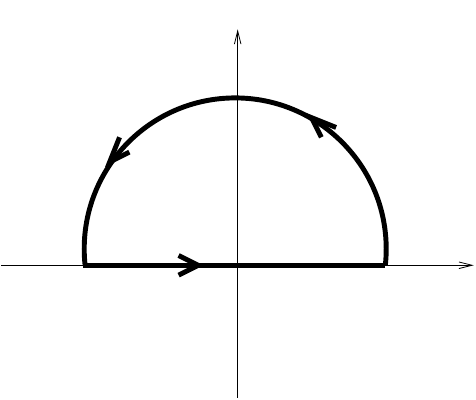}%
\end{picture}%
\setlength{\unitlength}{2072sp}%
\begingroup\makeatletter\ifx\SetFigFont\undefined%
\gdef\SetFigFont#1#2#3#4#5{%
  \reset@font\fontsize{#1}{#2pt}%
  \fontfamily{#3}\fontseries{#4}\fontshape{#5}%
  \selectfont}%
\fi\endgroup%
\begin{picture}(4344,3636)(529,-2188)
\put(3691,-1186){\makebox(0,0)[lb]{\smash{{\SetFigFont{6}{7.2}{\rmdefault}{\mddefault}{\updefault}{\color[rgb]{0,0,0}$(R,0)$}%
}}}}
\put(4546,-1231){\makebox(0,0)[lb]{\smash{{\SetFigFont{6}{7.2}{\rmdefault}{\mddefault}{\updefault}{\color[rgb]{0,0,0}$\Re(z)$}%
}}}}
\put(2296,1289){\makebox(0,0)[lb]{\smash{{\SetFigFont{6}{7.2}{\rmdefault}{\mddefault}{\updefault}{\color[rgb]{0,0,0}$\Im(z)$}%
}}}}
\put(3871,299){\makebox(0,0)[lb]{\smash{{\SetFigFont{6}{7.2}{\rmdefault}{\mddefault}{\updefault}{\color[rgb]{0,0,0}$C_R$}%
}}}}
\put(1216,-1186){\makebox(0,0)[lb]{\smash{{\SetFigFont{6}{7.2}{\rmdefault}{\mddefault}{\updefault}{\color[rgb]{0,0,0}$(-R,0)$}%
}}}}
\end{picture}%

\end{center}\caption{Path of integration in the complex plane}\label{fig.demi.disque}
\end{figure}
The 1D-Fourier transform of the tempered distribution $|y|$ is $-2 / \ti{l}^2$, thus one concludes formally that
$$
J_0 =\int_{\R} \frac{e^{i\ti{l}y_2}}{\ti{l}^2} dl = -\ud |y_2|.
$$
The $L^2$ bound is achieved thanks to the Parseval formula
$$
\nrm{G_1}{L^2(Z)}^2 = \nrm{\cF(G_1)}{\ell^2(\Z;L^2(\RR))}^2 =  \sum_{k\in\Z^*} \int_\R \frac{1}{(\ti{k}^2 + \ti{\ell}^2)^2} d\ell  \leq  C\sum_{k \in \Z^*} \frac{1}{|\ti{k}|^3} \leq C'.
$$
One recalls the expansion in series of the logarithm :
$$
\ln(1-z)=  - \sum_{k=1}^\infty \frac{z^k}{k},\quad \forall z \in \CC : |z| < 1.
$$
Thus for $\bfy \neq 0$
$$
\begin{aligned}
G_1(\y) & = - \frac{1}{2 \pi}  \Re \ln \left\{ 1 - e^{ 2 \pi ( -|y_2| + i y_1 )}\right\}= - \frac{1}{2 \pi}   \ln  \left| 1 - e^{ 2 \pi ( -|y_2| + i y_1 )}\right|\\
& = - \frac{1}{2 \pi}  \ln  \left\{ e^{-\pi |y_2|} \sqrt{2} \left( \cosh ( 2 \pi |y_2| ) - \cos( 2 \pi y_1 )\right)^\ud \right\} \\
& = \frac{|y_2| }{2} - \frac{1}{4 \pi } \ln ( 2 \left( \cosh ( 2 \pi y_2 ) - \cos( 2 \pi y_1 )\right) ),
\end{aligned}
$$
which gives the desired result for $G$.
\end{proof}

\noindent One can easily obtain the asymptotic behavior of the Green function. As $|y_2|$ tends to infinity, we have
\begin{equation}
\label{comportement_infini_G}
|G(\y)|\leq C|y_2|,\hspace{2mm}|\nabla G(\y)|\leq C\hspace{1mm}\mbox{ and }\hspace{1mm}|\partial^2_{\y^2} G(\y)|\leq C e^{-2\pi|y_2|}
\leq C\rho(y_2)^{-\sigma},\,\,\,\forall\sigma\in\R.
\end{equation} 
\noindent Moreover, as $|y_2|$ tends to infinity, we also have 
\begin{equation}
\label{comportement_infini_G1}
|G_1(\y)|\leq C\rho(y_2)^{-\sigma},\,\,\,\forall\sigma>0, \quad  \forall y_1 \in (0,1).
\end{equation}

\subsection{Convolution with the fundamental solution in weighted spaces}

\noindent Before proving weighted estimates on the Green function, we shall define 
the convolution if $f\in\cDd(Z)$, i.e. 
$$
G*f=\int_Z G(\x-\y)\,f(\y)\,d\y=\int_Z G(\y)\,f(\x-\y)\,d\y.
$$
Thus the convolution  $G*f$ belongs to $\cC_{\#}^\infty(Z)$. Moreover, thanks to \eqref{eq.green.lap},
we have $\Delta(G*f)=f$ in Z.

\subsection{Homogeneous estimates}

\begin{defi}\label{def.moy}
  For any function $f$ in $L^2(Z)$ the horizontal average $\ov{f}$ reads~:
$$
\ov{f}(y_2):= \int_0^1 f(y_1,y_2) dy_1.
$$
\end{defi}

\begin{lemma}\label{lem.zero.moy}
For any $h\in \cSp(Z)$ such that $\ov{h}\equiv0$  one has~:
$$
\partial^\alpha G_2 * h = 0, \text{ a.e. in }Z, \quad \alpha \in \{ 0,1,2 \}. 
$$
\end{lemma}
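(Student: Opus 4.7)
The plan rests on the key observation that $G_2(\y)=-\frac{1}{2}|y_2|$ depends only on the vertical variable $y_2$. Consequently, in the convolution $G_2*h$, the $y_1$-integration acts solely on $h$ and is exactly the horizontal average operation from Definition \ref{def.moy}. The hypothesis $\bar{h}\equiv 0$ then wipes out the whole integral.

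I would first settle the case $\alpha=0$. Since $h\in\cSd(Z)$ is rapidly decreasing in $y_2$ and 1-periodic in $y_1$, the integrand $|x_2-y_2|\,|h(y_1,y_2)|$ is absolutely integrable on $(0,1)\times\R$ for every $\x\in Z$: the linear growth of $|x_2-y_2|$ is tamed by the Schwartz-type decay of $h$. Fubini's theorem thus yields
$$
(G_2*h)(\x)=-\tfrac{1}{2}\int_\R |x_2-y_2|\left(\int_0^1 h(y_1,y_2)\,dy_1\right)dy_2=-\tfrac{1}{2}\int_\R |x_2-y_2|\,\bar{h}(y_2)\,dy_2=0,
$$
by the assumption $\bar{h}\equiv 0$.

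For the derivative cases, I would reduce everything back to the case $\alpha=0$ via the distributional identity $\partial^\alpha(G_2*h)=(\partial^\alpha G_2)*h$: once $G_2*h\equiv 0$ is established pointwise on $Z$, every one of its derivatives vanishes as well. Alternatively, one may argue directly from the structure of $G_2$: any $y_1$-derivative of $G_2$ is identically zero, while $\partial_2 G_2=-\tfrac12\mathrm{sgn}(y_2)$ and $\partial_2^2 G_2=-\delta_0(y_2)$ both remain independent of $y_1$, so the same Fubini reduction again produces a convolution whose $y_1$-integration of $h$ yields $\bar{h}\equiv 0$.

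I do not anticipate any real obstacle, as the proof boils down to Fubini plus the averaging hypothesis. The only point requiring mild care is the distributional sense in which $(\partial_2^2 G_2)*h$ is taken: it evaluates to $-\bar{h}(x_2)$, which is zero by assumption, so the smoothness of $h\in\cSd(Z)$ handles this case seamlessly.
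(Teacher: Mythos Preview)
Your proposal is correct and follows essentially the same approach as the paper: both exploit that $G_2$ depends only on $y_2$, so Fubini isolates the $y_1$-integral of $h$, which is $\bar h\equiv 0$. The paper phrases this via the distributional pairing $\langle G_2*h,\varphi\rangle$ whereas you compute the convolution pointwise (justified by the Schwartz decay of $h$), but the core argument is identical.
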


\begin{proof}
  $G_2$ is a tempered distribution, the convolution with $h\in \cSp(Z)$ makes 
sense~:
$$
\begin{aligned}
< G_2*h, \varphi >_{\cSp'(Z)\times\cSp(Z)}  
&=  \ud \int_{\RR} |x_2 | \int_{Z} \ov{h}(x_2+y_2) dz \; \varphi(\bfy) d \bfy d x_2 =0.
\end{aligned}
$$
The same proof holds for derivatives as well. As this is true for every $\varphi \in \cSp(Z)$, the result is proved.
\end{proof}

\begin{proposition}\label{prop.nrm.h2.unif}
Let  $f \in L^2(Z)$. Then there exists a constant independent on $f$ s.t. 
$$
\nrm{G * (f-\ov{f})}{H^2_{\#}(Z)} \leq C\nrm{f}{H^{m-2}_{\#}(Z)},
$$
where $C$ is a constant independent on $f$.
\end{proposition}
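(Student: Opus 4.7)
The plan is to split $G = G_1 + G_2$, discard the $G_2$-contribution via the zero-mean hypothesis, and then invoke Plancherel on the explicit MFT symbol of $G_1$. Since $\overline{f-\overline{f}}\equiv 0$ by construction, Lemma~\ref{lem.zero.moy} yields $\partial^\alpha G_2 * (f-\overline{f}) = 0$ for $|\alpha|\le 2$, so the problem reduces to bounding $G_1 * (f-\overline{f})$ in $H^2_{\#}(Z)$. The convolution is well-defined by Young's inequality, since $G_1 \in L^1(Z)\cap L^2(Z)$ by Proposition~\ref{prop.green.func} and $f-\overline{f} \in L^2(Z)$.

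A direct integration of the explicit series for $G_1$ against $e^{-i(\tilde{k}y_1+\tilde{l}y_2)}$ gives $\cF(G_1)(k,l) = (\tilde{k}^2 + \tilde{l}^2)^{-1}$ for $k \ne 0$ and $\cF(G_1)(0,l)=0$. Since $\overline{f}$ depends only on $y_2$, its MFT is concentrated on $\{k=0\}$, so $\cF(f-\overline{f})$ coincides with $\cF(f)$ on $\Z^*\times\R$ and vanishes at $k=0$. Extending the convolution identity of Proposition~\ref{prop.fourier.s}(ii) from $\cSd(Z)$ to $L^2(Z)$ by density (Proposition~\ref{prop.density}) and the Plancherel isometry (Proposition~\ref{prop.fourier.dirac}(iv)), one obtains $\cF(G_1*(f-\overline{f}))(k,l) = \cF(f)(k,l)/(\tilde{k}^2+\tilde{l}^2)$ on $\Z^*\times\R$ and zero at $k=0$. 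By Proposition~\ref{prop.fourier.dirac}(iii), differentiation of order $|\alpha|\le 2$ corresponds in Fourier to multiplication by $i^{|\alpha|}\tilde{k}^{\alpha_1}\tilde{l}^{\alpha_2}$. On $k\neq 0$ one has $|\tilde{k}|\ge 2\pi$, so each multiplier $\tilde{k}^{\alpha_1}\tilde{l}^{\alpha_2}/(\tilde{k}^2+\tilde{l}^2)$ with $|\alpha|\le 2$ is uniformly bounded on $\Z^*\times\R$. Summing Plancherel over $|\alpha|\le 2$ gives
$$
\|G_1*(f-\overline{f})\|_{H^2_{\#}(Z)}^2 = \sum_{|\alpha|\le 2}\sum_{k\in\Z^*}\int_\R \frac{(\tilde{k}^{\alpha_1}\tilde{l}^{\alpha_2})^2}{(\tilde{k}^2+\tilde{l}^2)^2}\,|\cF(f)(k,l)|^2\,dl \le C\,\|f\|_{L^2(Z)}^2,
$$
which is the desired bound (consistent with the hypothesis $f\in L^2(Z)$, i.e.\ the case $m=2$ of the statement).

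The delicate point I anticipate is the rigorous passage from Schwartz-periodic to $L^2$ data in the convolution-MFT product formula, since neither $G_1$ nor $f-\overline{f}$ belongs to $\cSd(Z)$. I expect to handle this by a standard density argument: approximate $f$ in $L^2(Z)$ by $f_n\in\cSd(Z)$ via Proposition~\ref{prop.density} combined with the Plancherel isometry, apply Proposition~\ref{prop.fourier.s}(ii) to $G_1 * (f_n-\overline{f_n})$ (truncating or mollifying $G_1$ beforehand if needed so that both factors lie in $\cSd(Z)$), and pass to the limit using Young's inequality in physical space together with dominated convergence in Fourier, the uniform boundedness of the multipliers providing the integrable majorant.
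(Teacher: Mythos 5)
Your argument is essentially identical to the paper's: both kill the $G_2$-contribution via Lemma \ref{lem.zero.moy} using $\overline{f-\overline{f}}\equiv 0$, then apply Plancherel for the MFT to $G_1*(f-\overline{f})$ and bound the multipliers $\tilde{k}^{\alpha_1}\tilde{l}^{\alpha_2}/(\tilde{k}^2+\tilde{l}^2)$ uniformly on $\Z^*\times\R$ (the restriction to $k\neq 0$ being the crucial point), finishing by density from $\cSd(Z)$ to $L^2(Z)$. Your reading of the right-hand side as $\|f\|_{L^2(Z)}$ (the $m=2$ case) matches what the paper actually proves.
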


\begin{proof}
First we show the lemma for $f\in \cSp(Z)$, then by density the result is extended to $L^2(Z)$ functions.
We set $h:=f-\ov{f}$.
Thanks to lemma above $\partial^{\bf \alpha} G * h = \partial^{\bf \alpha} G_1 * h $.
As $G_1$ belongs to $L^2(Z)$, one is allowed to apply the MFT in the strong sense.
It provides~: 
$$
\begin{aligned}
\nrm{\partial^{\bf \alpha}G * h}{L^2}^2 &= \nrm{\partial^{\bf \alpha}G_1 * h}{L^2}^2 =
\nrm{(i\ti{k})^{\alpha_1}(i \ell)^{\alpha_2} \hat{G_1} \hat{h}}{l^2L^2}^2 
 =  \sum_{k \in \Z^*} \int_{\RR} \frac{|\ti{k}|^{2 \alpha_1}|\ti{\ell}|^{2\alpha_2}}{ (\ti{k}^2 + \ti{\ell}^2)^2 } \left| \hat{h}(k,\ell) \right|^2 d \ell  \\
& \leq C\sum_{k \in \Z^*} \int_{\RR} \left| \hat{h}(k,\ell) \right|^2 d \ell  
\equiv C \nrm{\hat{h}}{l^2(\Z;L^2(\RR))}^2 =  C \nrm{h}{L^2(Z)}^2
\end{aligned}
$$
where $\alpha$ is a multi index s.t. $|\alpha|\in\{0,1,2\}$. The summation in $k$ is performed in $\Z^*$, 
this is crucial when estimating $|\ti{k}|^{2 \alpha_1}|\ti{\ell}|^{2\alpha_2}/ (\ti{k}^2 + \ti{\ell}^2)^2$ by a constant.
\end{proof}

\subsection{Non-homogeneous estimates}

We extend here the above results to the weighted context.
We start by stating three lemmas.

\begin{lemma}\label{lem.partial.convo}
We define the horizontal Fourier transform, given $f\in L^2(Z)$
$$
\cF_k(f)(k,y_2) := \int_0^1 f(y_1,y_2) e^{-i \ti{k} y_1} dy_1, \quad \forall k \in \Z, \quad \text{a.e } y_2 \in \RR.
$$
If we define the convolution operators~: for $g\in L^1(Z)\cap L^2(Z)$ and $f\in L^2(Z)$
$$
 g*_{\bfy} f := \int_{Z} g(\bfx -\bfy) f(\bfy) d\bfy ,\quad (g*_{y_2} f)(x_1,x_2,z_1) := \int_{\RR} g(x_1, x_2-y_2) f(z_1,y_2) dy_2,
$$
then one has 
$$
\cF_k (g *_{\bfy} f)(k,x_2) = (\cF_k g *_{y_2} \cF_k f)(k,x_2),\quad \forall k \in \Z, \quad \text{ a.e. } x_2 \in \RR.
$$
\end{lemma}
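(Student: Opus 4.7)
The plan is to prove the identity by direct computation. Unfolding the definitions, one has
\begin{equation*}
\cF_k(g*_{\y}f)(k,x_2)=\int_0^1 e^{-i\ti{k}x_1}\int_\R\!\int_0^1 g(x_1-y_1,x_2-y_2)\,f(y_1,y_2)\,dy_1\,dy_2\,dx_1,
\end{equation*}
and the aim is to separate the $(y_1,x_1)$-block from the $y_2$-block. Since $g\in L^1(Z)\cap L^2(Z)$ (understood with its $1$-periodic extension in the first variable) and $f\in L^2(Z)$, Young's inequality ensures $g*_{\y}f\in L^2(Z)$, and Fubini applies to the triple integral above.

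The key step is the evaluation of the inner $x_1$-integral at fixed $y_1\in(0,1)$. The substitution $z_1=x_1-y_1$ together with the joint $1$-periodicity in $z_1$ of $g(\cdot,x_2-y_2)$ and of $e^{-i\ti{k}z_1}$ collapses the shifted interval $[-y_1,1-y_1]$ back to $[0,1]$ and yields
\begin{equation*}
\int_0^1 g(x_1-y_1,x_2-y_2)\,e^{-i\ti{k}x_1}\,dx_1=e^{-i\ti{k}y_1}\,\cF_k g(k,x_2-y_2).
\end{equation*}
Reinserting this identity and separating the remaining integrals then gives
\begin{equation*}
\cF_k(g*_{\y}f)(k,x_2)=\int_\R \cF_k g(k,x_2-y_2)\,\cF_k f(k,y_2)\,dy_2,
\end{equation*}
which is precisely $(\cF_k g*_{y_2}\cF_k f)(k,x_2)$ once the auxiliary first slot in the definition of $*_{y_2}$ is identified with the horizontal Fourier index $k$.

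The only delicate point is the pointwise meaning of $\cF_k(g*_{\y}f)(k,\cdot)$ and the rigorous use of Fubini at borderline $L^2$ regularity. The cleanest remedy is to first establish the identity for $f\in\cDd(Z)$, where the integrand is bounded and $f$ has compact support in $y_2$ so every step is unambiguous, and then to extend by density of $\cDd(Z)$ in $L^2(Z)$ (Proposition~\ref{densite_L2} and the remark that follows). Continuity of both sides with respect to $f$ in $L^2$ follows from Young's inequality applied to $g*_{\y}(\cdot)$ and from slicewise Parseval in $y_1$ for $\cF_k$, so no genuinely new analytic obstacle appears beyond this standard approximation argument.
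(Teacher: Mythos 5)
Your proof is correct and follows essentially the same route as the paper's: both reduce the identity to the convolution theorem for the periodic Fourier transform in $y_1$ combined with Fubini to commute the vertical integration with $\cF_k$. The only difference is cosmetic — you prove the horizontal convolution theorem explicitly via the substitution $z_1=x_1-y_1$ and periodicity, where the paper simply invokes it, and your closing remark on first treating $f\in\cDd(Z)$ and passing to the limit is a sound way to secure the Fubini step.
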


\begin{proof}
If $f \in L^2(Z)$ and $g \in (L^1\cap L^2)(Z)$ then the convolution with respect to both variables is well defined.
Setting
$$
(f*_{y_1} g)(x_1,x_2,z_2) := \int_{\RR} f(x_1-y_1, x_2) g(y_1,z_2) dy_1,
$$
almost everywhere in $y_2$ and for every $k \in \Z$ one has
$$
\cF_k\left( f*_{y_1} g (x_1,x_2-y_2,y_2)\right) = (\cF_k f) (k,x_2-y_2)  (\cF_k g) (k,y_2). 
$$
Integrals in the vertical direction commute with the horizontal Fourier transform
due to Fubini's theorem. So one shall write~:
$$
\begin{aligned}
\int_{\RR} \cF_k \left(f *_{y_1} g\right)(k,x_2-y_2,y_2) d y_2 & =  \cF_k \left(\int_{\RR} (f *_{y_1} g)(k,x_2-y_2,y_2) d y_2 \right) 
=  \cF_k (f *_{\bfy} g)(k,x_2),
\end{aligned}
$$
which ends the proof.
\end{proof}

\begin{lemma}\label{lem.partial.parseval}
  For $f \in L^2_\beta(Z)$, one has
$$
\nrm{f}{L^2_\beta(Z)}^2 = \sum_{k\in Z} \nrm{ \cF_k(f) }{L^2_\beta(\RR)}^2,\quad \forall \beta \in \RR.
$$
\end{lemma}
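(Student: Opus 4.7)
The plan is to exploit the fact that the weight $\rho^\beta$ depends only on $y_2$, so that the horizontal Fourier transform $\cF_k$ does not interact with it at all. I will therefore reduce the weighted identity on $Z$ to the unweighted Parseval identity for Fourier series on $[0,1]$ applied fiberwise in $y_2$, then reinsert the weight and commute the sum with the vertical integral.

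First, I would unfold the weighted norm using Tonelli's theorem for the non-negative integrand $\rho^{2\beta}(y_2)|f(y_1,y_2)|^2$:
$$
\nrm{f}{L^2_\beta(Z)}^2 = \int_{\RR} \rho^{2\beta}(y_2) \int_0^1 |f(y_1,y_2)|^2 \, dy_1 \, dy_2.
$$
Since $f\in L_\beta^2(Z)$ means $\rho^\beta f \in L^2(Z)$ and $\rho^\beta$ is locally bounded in $y_2$, the inner integral is finite for almost every $y_2\in\RR$; that is, $f(\cdot,y_2)\in L^2(0,1)$ for a.e.\ $y_2$.

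Next, for every such $y_2$, the family $\{e^{i\ti{k} y_1}\}_{k\in\Z}$ is a complete orthonormal basis of $L^2(0,1)$ (recall $\ti{k}=2\pi k$), so the classical Parseval identity for Fourier series applied to $f(\cdot,y_2)$ yields
$$
\int_0^1 |f(y_1,y_2)|^2 \, dy_1 = \sum_{k\in\Z} |\cF_k(f)(k,y_2)|^2, \quad \text{a.e. } y_2 \in \RR,
$$
with $\cF_k(f)(k,y_2)$ being exactly the $k$-th Fourier coefficient in the definition above. Multiplying by $\rho^{2\beta}(y_2)$ and integrating in $y_2$, I would invoke Tonelli once more (everything is non-negative) to exchange the sum with the integral:
$$
\int_{\RR} \rho^{2\beta}(y_2) \sum_{k\in\Z} |\cF_k(f)(k,y_2)|^2 \, dy_2 = \sum_{k\in\Z}\int_{\RR} \rho^{2\beta}(y_2) |\cF_k(f)(k,y_2)|^2 \, dy_2 = \sum_{k\in\Z} \nrm{\cF_k(f)(k,\cdot)}{L_\beta^2(\RR)}^2.
$$
Combining the three displayed equalities yields the claim.

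There is essentially no hard step here: the only subtlety is measurability of $y_2\mapsto |\cF_k(f)(k,y_2)|^2$ and the a.e.\ validity of the fiberwise Parseval identity, both of which follow from the Fubini--Tonelli theorem applied to $f\in L_\beta^2(Z)$. The argument is independent of the sign or magnitude of $\beta$ precisely because $\rho$ only depends on $y_2$ and hence commutes with the horizontal Fourier transform.
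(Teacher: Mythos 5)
Your proof is correct. The paper states Lemma \ref{lem.partial.parseval} without any proof, treating it as a classical fact, and your argument --- Tonelli to slice the weighted norm, the fiberwise Parseval identity for the orthonormal basis $\{e^{2\pi i k y_1}\}_{k\in\Z}$ of $L^2(0,1)$ for a.e.\ $y_2$, then Tonelli again to interchange the sum over $k$ with the vertical integral --- is exactly the standard argument being implicitly invoked; the key observation that the weight $\rho^{2\beta}$ depends only on $y_2$, and hence acts as a positive constant on each horizontal fiber, is the right one (and is precisely why the sign of $\beta$ is irrelevant). The only slightly loose phrase is ``$\rho^\beta$ is locally bounded in $y_2$''; what you actually use is that $\rho^{2\beta}(y_2)$ is a finite, strictly positive number for each fixed $y_2$, so finiteness of the weighted inner integral (guaranteed a.e.\ by Tonelli) is equivalent to $f(\cdot,y_2)\in L^2(0,1)$.
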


\begin{lemma}\label{coro.convo.expo}
Let $a \in \RR$ s.t. $|a|\geq 1$ then for any real $b$, one has
$$
I(x):= \int_{\RR} e^{-|a| |x-y|} \rho(y)^{-b} dy \leq \frac{C \rho(x)^{-b}}{|a|} ,\quad x \in \RR.
$$
\end{lemma}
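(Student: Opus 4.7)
The plan is to exploit the sub-multiplicative character of $\rho$, which allows us to factor out $\rho(x)^{-b}$ from the integrand, and then absorb the remaining integral into a constant using the exponential decay and the fact that $|a|\geq 1$.

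\textbf{Step 1 (key weight inequality).} I would first establish that for all real $b$ and all $x,y\in\RR$,
$$
\rho(y)^{-b} \leq C_b\, \rho(x)^{-b}\, \rho(x-y)^{|b|}.
$$
This follows from the elementary sub-multiplicative bound $\rho(y)^2 = 1 + |x+(y-x)|^2 \leq 2(1+|x|^2)(1+|x-y|^2)$, giving $\rho(y)\leq \sqrt{2}\,\rho(x)\rho(x-y)$ and, by symmetry, $\rho(x)\leq \sqrt{2}\,\rho(y)\rho(x-y)$. Applying whichever of the two is needed handles both signs of $b$ uniformly.

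\textbf{Step 2 (factor out the weight at $x$).} Inserting this estimate into the definition of $I$ yields
$$
I(x) \leq C\, \rho(x)^{-b} \int_\RR e^{-|a||x-y|}\, \rho(x-y)^{|b|}\, dy
    = C\, \rho(x)^{-b} \int_\RR e^{-|a||z|}\, \rho(z)^{|b|}\, dz,
$$
after the translation $z = x-y$. Since the remaining integral no longer depends on $x$, all the $x$-dependence has been absorbed into the prefactor $\rho(x)^{-b}$.

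\textbf{Step 3 (rescale and use $|a|\geq 1$).} Setting $w = |a|z$ gives
$$
I(x) \leq \frac{C\, \rho(x)^{-b}}{|a|} \int_\RR e^{-|w|}\, \rho(w/|a|)^{|b|}\, dw.
$$
The hypothesis $|a|\geq 1$ is used here: it implies $\rho(w/|a|) \leq \rho(w)$, and since the exponential beats any polynomial, the integral $\int_\RR e^{-|w|} \rho(w)^{|b|}\, dw$ is finite and bounded by a constant depending only on $b$. This yields the claimed estimate.

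\textbf{Main obstacle.} There is no serious obstacle; the only point requiring care is Step 1, where one must handle both signs of $b$ by choosing the appropriate form of the sub-multiplicative inequality. The role of the assumption $|a|\geq 1$ is also essential in Step 3, since otherwise the rescaled weight $\rho(w/|a|)^{|b|}$ would blow up as $|a|\to 0^+$ and the argument would collapse.
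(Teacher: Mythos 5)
Your proof is correct, but it takes a genuinely different route from the paper's. The paper fixes $R=|x|$ and splits $\RR$ into the three regions $|y|\leq R/2$, $R/2\leq|y|\leq 2R$ and $|y|>2R$, then estimates each piece separately with case distinctions on the sign and size of $b$ and on whether $R$ is large or small; the exponential factor $e^{-|a|R}$ is converted into the needed power of $(|a|R)^{-1}$ by hand on the first and third regions. You instead use the Peetre-type sub-multiplicativity
$\rho(y)\leq\sqrt{2}\,\rho(x)\rho(x-y)$ and its symmetric counterpart to pull $\rho(x)^{-b}$ out of the integral at the cost of a factor $\rho(x-y)^{|b|}$, after which a translation and the rescaling $w=|a|z$ (together with $\rho(w/|a|)\leq\rho(w)$, which is exactly where $|a|\geq1$ enters) reduce everything to the single convergent integral $\int_{\RR}e^{-|w|}\rho(w)^{|b|}\,dw$. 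Your argument is shorter, treats both signs of $b$ uniformly, and produces an explicit constant $C=2^{|b|/2}\int_{\RR}e^{-|w|}\rho(w)^{|b|}\,dw$ depending only on $b$; what it gives up is the finer region-by-region information (e.g.\ the extra decay in $|a|$ available when $b>1$) that the paper's decomposition makes visible, though none of that is needed for the stated estimate or for its use in Proposition \ref{prop.weight.h1}.
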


\begin{proof}
We set  $|x|=R$ and let $R_0$ be a real such that $R_0>1$. We define three regions of the real line~:
$$
D_1 := B(0,R/2), \quad D_2 := B(0,2R)\setminus B(0, R/2), \quad D_3 := \RR \setminus B(0,2R)
$$
  Decomposing the convolution integral in these three parts, one gets~:
$$
I(x)=\sum_{i=1}^3  I_i(x),\quad \text{ where }  I_i(x)=\int_{D_i}e^{-|a| |x-y|}\rho(y)^{-b}\,dy.
$$
We have two cases to investigate~: $R>R_0$ and $R\leq R_0$.
\begin{enumerate}[(a)]
\item If $R>R_0$
\begin{enumerate}[(i)]
\item On $|y|\leq R/2$, $|x-y| \sim R$ so that
$$
I_1 (x) \sim e^{-|a|R}\int_{|y|\leq \frac{R}{2}}  (1+|y| )^{-b} dy. 
$$
According to the value of $b$, one has
$$
I_1 (x) \leq  C e^{-|a|R} \; 
\left\{ 
\begin{aligned} 
& 1& \text{ if } &b>1, \\
& R & \text{ if } &0\leq b\leq1, \\
& R^{1-b} & \text{ if } &b<0. 
\end{aligned}  
\right. 
$$
As $R>1$, there holds
$$
e^{-|a|R} \leq \frac{([\sigma]+1)!}{(|a|R)^\sigma}.
$$
Setting 
$$
\sigma(b) := b \, \chiu{]1,+\infty[}(b) + (1+b)\, \chiu{[0,1]}(b) + \chiu{]-\infty,0[}(b),
$$
one recovers the claim in that case. 
\item On $R/2 \leq |y|\leq 2 R$, $|y| \sim R$ 
$$
I_2 (x) \sim R^{-b} \int_{\frac{R}{2} \leq |y| \leq 2 R} e^{-  |a| |x-y|} \, dy ,
$$
because $|x-y| \leq 3 R$, one has $I_2(x) \sim R^{-b} / |a|$.
\item On the rest of the line $|y|> 2R$, $|x-y| \sim |y|$, so that 
$$
I_3 (x) \sim \int_{|y|>2 R} e^{-|a | |y|} |y|^{-b} dy. 
$$
Two situations occur~:
\begin{itemize}
\item either $b\leq 0$ then we set $\ti{b}:=[-b]+1$ and one has
$$
\begin{aligned}
\int_{|y|>2 R} e^{-|a | |y|} |y|^{-b} dy & \leq  C \frac{e^{ -|a| R }}{|a|} \left\{ R^{\ti{b}} \sum_{p=0}^{\ti{b}} \frac{\ti{b}!}{(\ti{b}-p)!} \left( |a| R \right)^{-p} \right\}  \\
& \leq  C \frac{e^{ -|a| R }}{|a|} \, R^{ \ti{b} } \, \ti{b}! \, \ti{b}  \leq  \frac{C(b)R^{ -b }}{|a|} .
\end{aligned}
$$
In the latter inequality we used  decreasing properties of the exponential function in order to fix the correct behavior for large $R$.
\item either $b>0$ and one has directly that
$$
 \int_{|y|>2 R} e^{-|a | |y|} |y|^{-b} dy \leq R^{-b} \int_{|y|> R}  e^{-|a | |y|} dy .
$$
\end{itemize}
\end{enumerate}
\item Otherwise, if $R<R_0$ then $\rho(x-y)\sim \rho(y)$, and thus 
$$
I(x) \sim \int_{\RR} e^{-|a | |y | } \rho(y)^{-b} dy < \frac{C}{|a|}.
$$
\end{enumerate}
\end{proof}
\begin{proposition}\label{prop.weight.h1}
Let us set $\beta$ a positive real number. Then for every $f \in L^2_\loc(Z)$ s.t. $f-\ov{f}\in L^2_\beta(Z)$
$$
\nrm{\partial^\alpha G*(f-\ov{f})}{L^2_{\beta'}(Z)}\leq C \nrm{f-\ov{f}}{L^2_\beta(Z)},\quad \forall \beta' < \beta -\ud, \quad |\alpha| \in \{0,1\},
$$
where the constant $C$ is independent of the data.
\end{proposition}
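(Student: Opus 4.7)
}

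The plan is to set $h:=f-\ov{f}$, reduce to $G_1$, and then use the MFT to diagonalize the convolution mode by mode in $k$. Because $\ov{h}\equiv 0$, Lemma \ref{lem.zero.moy} gives $\partial^{\alpha}G_{2}*h\equiv 0$ for $|\alpha|\in\{0,1\}\subset\{0,1,2\}$, so $\partial^{\alpha}G*h=\partial^{\alpha}G_{1}*h$. Since $G_{1}\in L^{2}(Z)$ the strong MFT is available; combining the horizontal Parseval identity (Lemma \ref{lem.partial.parseval}) with the horizontal convolution formula (Lemma \ref{lem.partial.convo}) yields
$$
\nrm{\partial^{\alpha}G*h}{L^2_{\beta'}(Z)}^{2}=\sum_{k\in\Z^{*}}\nrm{\cF_{k}(\partial^{\alpha}G_{1})(k,\cdot)\,*_{y_2}\,\cF_{k}h(k,\cdot)}{L^{2}_{\beta'}(\R)}^{2},
$$
where the $k=0$ term drops because $\cF_{0}h=\ov{h}=0$. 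First I would establish by direct differentiation of the explicit expression of $G_1$ given in Proposition \ref{prop.green.func} the pointwise bound
$$
|\cF_{k}(\partial^{\alpha}G_{1})(k,y_{2})|\le C\,|\ti{k}|^{|\alpha|-1}\,e^{-|\ti{k}||y_{2}|},\qquad k\in\Z^{*},\ |\alpha|\in\{0,1\}.
$$

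Second, for each $k\in\Z^{*}$ I would estimate the one-dimensional convolution via weighted Cauchy--Schwarz, splitting $\rho(y_2)^{-\beta}\rho(y_2)^{\beta}$:
$$
|(\cF_{k}\partial^{\alpha}G_{1}*_{y_2}\cF_{k}h)(k,x_{2})|^{2}\le\Bigl(\int_{\R}|\cF_{k}\partial^{\alpha}G_{1}|^{2}(k,x_{2}-y_{2})\rho(y_{2})^{-2\beta}dy_{2}\Bigr)\cdot\int_{\R}\rho(y_{2})^{2\beta}|\cF_{k}h(k,y_{2})|^{2}dy_{2}.
$$
Using the pointwise bound and Lemma \ref{coro.convo.expo} (with $|a|=2|\ti{k}|\ge 4\pi>1$ and $b=2\beta$), the first factor is controlled by $C\,|\ti{k}|^{2|\alpha|-3}\rho(x_{2})^{-2\beta}$. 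The second factor being independent of $x_{2}$, multiplying by $\rho(x_{2})^{2\beta'}$ and integrating over $x_{2}\in\R$ factorises cleanly:
$$
\nrm{\cF_{k}(\partial^{\alpha}G_{1})*_{y_2}\cF_{k}h}{L^{2}_{\beta'}(\R)}^{2}\le C\,|\ti{k}|^{2|\alpha|-3}\Bigl(\int_{\R}\rho(x_{2})^{2(\beta'-\beta)}dx_{2}\Bigr)\nrm{\cF_{k}h(k,\cdot)}{L^{2}_{\beta}(\R)}^{2}.
$$

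The main point of the argument is precisely this weight integral: $\int_{\R}\rho(x_{2})^{2(\beta'-\beta)}dx_{2}<\infty$ if and only if $2(\beta-\beta')>1$, i.e.\ $\beta'<\beta-1/2$, which is exactly the hypothesis of the proposition and the source of the $1/2$-loss in the exponent. Third, since $|\alpha|\le 1$ one has $|\ti{k}|^{2|\alpha|-3}\le|\ti{k}|^{-1}\le(2\pi)^{-1}$ uniformly on $\Z^{*}$, so summing in $k$ and invoking Parseval (Lemma \ref{lem.partial.parseval}) once more gives
$$
\nrm{\partial^{\alpha}G*h}{L^{2}_{\beta'}(Z)}^{2}\le C\sum_{k\in\Z^{*}}\nrm{\cF_{k}h(k,\cdot)}{L^{2}_{\beta}(\R)}^{2}=C\,\nrm{h}{L^{2}_{\beta}(Z)}^{2}.
$$

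Finally, the computation above is carried out rigorously for $h\in\cSd(Z)$ (where the MFT applies strongly and Fubini is unconstrained), and then extended to general $f-\ov{f}\in L^{2}_{\beta}(Z)$ by the density of $\cSd(Z)$-like subspaces together with the continuity of $f\mapsto f-\ov{f}$ on $L^{2}_{\beta}(Z)$. I expect the main obstacle to be bookkeeping rather than conceptual: carefully tracking the powers of $|\ti{k}|$ that come from the explicit form of $\cF_{k}G_{1}$, the two applications of Lemma \ref{coro.convo.expo}, and the derivatives $\partial^{\alpha}$, so as to confirm that the only restriction on $\beta'$ indeed comes from the integrability of $\rho^{2(\beta'-\beta)}$ at infinity and not from the Green function itself.
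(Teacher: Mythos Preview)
Your proposal is correct and follows essentially the same route as the paper's proof: reduce to $G_1$ via Lemma \ref{lem.zero.moy}, decompose mode by mode through Lemmas \ref{lem.partial.parseval} and \ref{lem.partial.convo}, bound $\cF_k(\partial^\alpha G_1)$ by $C|\tik|^{|\alpha|-1}e^{-|\tik||y_2|}$, apply weighted Cauchy--Schwarz followed by Lemma \ref{coro.convo.expo} with $a=2|\tik|$ and $b=2\beta$, and conclude via the integrability of $\rho^{2(\beta'-\beta)}$ and density. The paper performs exactly the same Cauchy--Schwarz splitting $\rho^{-\beta}\rho^{\beta}$ (though it leaves it implicit) and obtains the identical intermediate bound $C\,\nrm{\cF_k h(k,\cdot)}{L^2_\beta(\R)}\,\rho(x_2)^{-\beta}|\tik|^{|\alpha|-3/2}$, so there is no substantive difference between the two arguments.
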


\begin{proof}
Set again $h=f-\ov{f}$ and suppose that $h\in\cSp(Z)$.
By Lemma \ref{lem.partial.parseval}~:
$$
\begin{aligned}
\nrm{\partial^\alpha G*h}{L^2_{\beta'}(Z)}^2 &= \nrm{\partial^\alpha G_1*h}{L^2_{\beta'}(Z)}^2 =\sum_{k\in \Z^*} \nrm{ \cF_k(\partial^\alpha_{\bfy}G_1*_{\bfy}h) }{L^2_{\beta'}(\RR)}^2 \\
& =  \sum_{k\in \Z^*} \nrm{ (i\tik)^{\alpha_1}\cF_k (\partial_{y_2}^{\alpha_2}G_1)*_{y_2}\cF_k(h) }{L^2_{\beta'}(\RR)}^2 .
\end{aligned}
$$ 
For all $k\in \Z^*$, thanks to Lemma \ref{coro.convo.expo}, a.e. $\bfx$ in  $Z$, 
$$
\begin{aligned}
\left| (i\tik)^{\alpha_1}\cF_k(\partial^{\alpha_2}_{y_2} G_1)*_{y_2}\cF_k(h) \right|& \leq 
\int_{\RR} \frac{e^{-|\tik| |y_2-x_2|}}{|\tik|^{1-|\alpha|}} \cF_k(h)(k,y_2) dy_2  \\
& \hspace{-2cm} \leq  C \nrm{\cF_k(h)(k,\cdot)}{L^2_\beta(\RR)} \frac{\rho(x_2)^{-\beta}}{|\tik|^{\td-|\alpha|}}
\leq   C \nrm{\cF_k(h)(k,\cdot)}{L^2_\beta(\RR)} \rho(x_2)^{-\beta},
\end{aligned}
$$
for $\alpha$ multi-index s.t. $|\alpha| \in \{0,1\}$.
Then
$$
\nrm{ \partial^\alpha G * h }{L^2_{\beta'}(Z)}^2 \leq \sum_{k \in \Z^*} \nrm{\cF_k(h)(k,\cdot)}{L^2_\beta(\RR)}^2 \int_{\RR} \rho(x_2)^{2\beta'-2\beta} dx_2 \leq C \nrm{f}{L^2_\beta(Z)} ,
$$
if $\beta'<\beta- 1/2$. As in Proposition \ref{prop.nrm.h2.unif}, the result above shall be extended to $L^2(Z)$ functions by density arguments.
\end{proof}
\begin{thm}\label{thm.ortho.poly}
Let $f\in L^2_{\alpha}(Z) \bot \PP_1'$, for any $\alpha > \frac{3}{2}$, one has
$$
\nrm{G_2 * f }{L^2_{\alpha-2-\e}(Z)} \leq C \nrm{f}{L^2_\alpha(Z)}.
$$
\end{thm}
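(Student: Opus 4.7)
My plan is to exploit the orthogonality $f \bot \PP_1'$ to subtract a suitable affine polynomial in $y_2$ inside the convolution, which effectively cuts off the slow growth of $|x_2 - y_2|$ for large $|x_2|$.

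First I would write out, for $\bfx \in Z$, the identity
$$(G_2 * f)(\bfx) = -\tfrac{1}{2}\int_Z |x_2 - y_2|\, f(\y)\, d\y.$$
Observe that since $\alpha > 3/2$, the polynomials $1$ and $y_2$ belong to $L^2_{-\alpha}(Z)$, so the orthogonality $f\bot\PP_1'$ is meaningful in duality with $L^2_\alpha(Z)$. Hence, for any affine function $p(y_2)$ in $y_2$ (which may depend on $x_2$ as a parameter), $\int_Z p(y_2)\, f(\y)\, d\y = 0$. I would choose $p(y_2) = x_2 - y_2$ when $x_2 > 0$ and $p(y_2) = y_2 - x_2$ when $x_2 < 0$, leading to the key identity
$$(G_2*f)(\bfx) = -\int_0^1\int_{x_2}^{\infty}(y_2-x_2)\,f(y_1,y_2)\,dy_2\,dy_1 \quad\text{for } x_2 > 0,$$
with the symmetric formula for $x_2 < 0$. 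This step is the heart of the argument: the range of integration in $y_2$ is now restricted to $|y_2|\geq |x_2|$.

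Next, for $x_2 > 0$ I would apply Cauchy--Schwarz against the weight pair $(\rho^\alpha, \rho^{-\alpha})$:
$$|(G_2*f)(\bfx)|^2 \leq \|f\|_{L^2_\alpha(Z)}^2\cdot I(x_2),\qquad I(x_2):=\int_{x_2}^{\infty}(y_2-x_2)^2\rho(y_2)^{-2\alpha}\,dy_2,$$
and then estimate $I(x_2)$. For $x_2 \geq 1$, $\rho(y_2)\sim y_2$ on the interval, so $I(x_2)\lesssim x_2^{3-2\alpha}$ (the exponent $3-2\alpha<0$ makes the integral converge at infinity thanks to $\alpha>3/2$); for $0\leq x_2 \leq 1$, $I(x_2)$ is uniformly bounded by $I(0)<\infty$. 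Since $\rho(x_2)^{3-2\alpha}\in[2^{(3-2\alpha)/2},1]$ on $[0,1]$, I can absorb both regimes into the clean bound
$$I(x_2)\leq C\,\rho(x_2)^{3-2\alpha},\qquad\forall x_2\in\R,$$
by symmetry.

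Finally I would integrate in the weight $\rho(x_2)^{2(\alpha-2-\e)}$:
$$\|G_2*f\|_{L^2_{\alpha-2-\e}(Z)}^2 \leq C\|f\|_{L^2_\alpha(Z)}^2\int_{\R}\rho(x_2)^{2(\alpha-2-\e)+3-2\alpha}\,dx_2 = C\|f\|_{L^2_\alpha(Z)}^2\int_{\R}\rho(x_2)^{-1-2\e}\,dx_2,$$
which is finite for every $\e>0$. The main technical subtlety is the polynomial subtraction step, since $|x_2-y_2|$ is not smooth in $y_2$ and one must choose different linear corrections in the half-planes $\{x_2>0\}$ and $\{x_2<0\}$; the remainder of the proof is a standard Cauchy--Schwarz and weight-bookkeeping computation.
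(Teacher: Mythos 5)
Your proof is correct, and it takes a genuinely different route from the paper. The paper argues by duality: it tests $G_2*f$ against $\varphi\in L^2_{2-\alpha+\e}$, approximates $\varphi$ by Schwartz functions $\varphi_\delta$, Taylor-expands the smooth function $|x|*\varphi_\delta$ at the origin with integral remainder, uses $f\bot\PP_1'$ to cancel the zeroth- and first-order terms, bounds the remainder $2\int_0^x\varphi_\delta(s)(x-s)\,ds$ by $C\,\rho(x)^{\alpha-\frac12-\e}\|\varphi_\delta\|_{L^2_{2-\alpha+\e}}$, and finally runs a Fubini computation to identify the resulting bilinear form with $\int(|x|*f)\varphi$. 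You perform the symmetric cancellation on the kernel side instead: subtracting the affine function $x_2-y_2$ (resp.\ $y_2-x_2$) from $|x_2-y_2|$ leaves $2(y_2-x_2)_+$ (resp.\ $2(x_2-y_2)_+$), which localizes the $y_2$-integration to the region $|y_2|\geq|x_2|$ where the dual weight $\rho(y_2)^{-2\alpha}$ is small; Cauchy--Schwarz then gives $I(x_2)\leq C\rho(x_2)^{3-2\alpha}$ and the weight bookkeeping closes with $\int_\R\rho^{-1-2\e}<\infty$. The two mechanisms are mirror images (Taylor expansion of the test function versus affine correction of the kernel), but your version is more elementary --- no duality, no density argument, no Fubini identification beyond the absolute convergence of $\int_Z|x_2-y_2|\,|f(\y)|\,d\y$, which you correctly note is guaranteed by $\PP_1'\subset L^2_{-\alpha}(Z)$ for $\alpha>3/2$ --- and it yields as a by-product the pointwise decay $|(G_2*f)(x_2)|\leq C\|f\|_{L^2_\alpha(Z)}\,\rho(x_2)^{\frac32-\alpha}$, which is slightly more information than the weighted $L^2$ bound. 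The only point worth making explicit is that the pointwise convolution integral you manipulate coincides with the distributional convolution $G_2*f$ used elsewhere in the paper; since the integral converges absolutely under your hypotheses, this identification is routine.
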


\begin{proof}
Since the proof is essentially 1D, we consider functions defined in $\RR$.
The extension to $Z$ is straightforward. We proceed by duality, namely we expect that
$$
\sup_{ \varphi \in L^2_{2-\alpha+\e}(Z)}  \frac{( G * f, \varphi)}{\nrm{\varphi}{L^2_{2-\alpha+\e}(Z)}} < \infty.
$$
We choose $f \in L^2_{\alpha}(\RR)\bot\PP_1'$ and $(\varphi_\delta)_\delta \in \cSp(\RR)$ s.t. $\varphi_\delta \to\varphi$  in $L^2_{2-\alpha +\e}(\RR)$,
then $|x|*\varphi_\delta$ is infinitely differentiable and one can apply the Taylor expansion 
with an integral remainder~:
$$
\begin{aligned}
&\int_{\RR} (|x|* \varphi_\delta)(x) f(x)dx  \\
&=  \int_{\RR} \left\{ (|x|*\varphi_\delta)(0)+ (|x|*\varphi_\delta)'(0).x + \int_0^x (|x|*\varphi_\delta)''(s)(x-s) ds \right\} f(x) dx \\
&= \int_{\RR} f(x) \int_0^{x} (|x|*\varphi_\delta)''(s)(x-s) ds dx  =   2 \int_{\RR} f(x) \int_0^{x} \varphi_\delta(s)(x-s) ds dx \\
& \leq C \nrm{ \varphi_\delta }{L^2_{2-\alpha+\e}(\RR)} \int_{\RR}\rho^{\alpha-\ud-\e}(x) |f(x) |dx 
\leq C  \nrm{ \varphi_\delta}{L^2_{2-\alpha+\e}(\RR)} \nrm{f}{L^2_{\alpha}(\RR)} .
\end{aligned}
$$
In the second line we used the orthogonality condition on $f$ 
in order to cancel the first two terms in the Taylor expansion.
An easy computation shows that if  $\varphi_\delta \to\varphi$ 
$$
\ell:= \lim_{\delta \to0} \int_{\RR} (|x|* \varphi_\delta)(x) f(x)dx =  2 \int_{\RR} f(x) \int_0^{x} \varphi(s)(x-s) ds dx,\quad  \forall f\in L^2_{\alpha}(\RR) \bot \PP_1'.
$$
By Fubini theorem, one is then allowed to write~:
$$
\begin{aligned}
& 2 \int_\RR f(x) \int_0^{x} \varphi(s)(x-s) ds dx  \\
&  = 2 \left\{ \int_{\RR_{s}^+} \varphi (s)  \int_s^\infty f(x) (x -s ) d x  ds 
- \int_{\RR_{s}^-} \varphi (s) \int_{-\infty}^s f(x) (x -s ) d x ds \right\}  \\
&= -2 \int_{\RR_{s}}\varphi (s) \int_{-\infty}^s f(x) (x -s ) d x ds  \\
& =  \int_{\RR_{s}}\varphi (s) \left\{\int_s^\infty  f(x) (x -s ) d x - \int_{-\infty}^s f(x) (x -s ) d x \right\}ds \\
& =  \int_{\RR} \varphi(s) \int_{\RR_x} f(x)|x-s| dx\,ds \equiv  \int_{\RR} (|x|*f) (s) \varphi(s) ds, 
\end{aligned}
$$
which ends the proof.
\end{proof}

\begin{lemma}\label{lem.partie.moyenne}
If $\alpha > \ud$, and if $f \in L^2_{\alpha}(Z)\bot \RR$, then
$$
\nrm{ \sgn * \ov{f}}{L^2_{\alpha-1-\epsilon}(Z)}\leq C \nrm{ f}{L^2_{\alpha}(Z)} .
$$
\end{lemma}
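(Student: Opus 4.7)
The plan is to reduce the estimate to a purely one-dimensional convolution on the vertical axis, exploit the orthogonality condition $f\bot\R$ to cancel the non-integrable constant term hidden in the sign kernel, and then close the estimate by a weighted Cauchy--Schwarz inequality.

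First I would note that both $\sgn(y_2)$ and $\ov{f}$ depend only on $y_2$, so $\sgn * \ov{f}$ is independent of $x_1$ and coincides with the one-dimensional convolution
$$h(x_2):=\int_{\R} \sgn(x_2-y_2)\,\ov{f}(y_2)\,dy_2.$$
An application of Cauchy--Schwarz to the horizontal average shows that $\|\ov{f}\|_{L^2_\alpha(\R)}\leq\|f\|_{L^2_\alpha(Z)}$, and by Fubini the $Z$-norm and $\R$-norm of $h$ coincide. Now the orthogonality hypothesis $f\bot\R$ gives $\int_{\R}\ov{f}(y_2)\,dy_2=0$, which allows the identity
$$h(x_2)=\int_{-\infty}^{x_2}\ov{f}\,dy_2-\int_{x_2}^{\infty}\ov{f}\,dy_2=-2\int_{x_2}^{\infty}\ov{f}\,dy_2=2\int_{-\infty}^{x_2}\ov{f}\,dy_2.$$
For the pointwise estimate I would choose the tail integral corresponding to the side of the origin where $x_2$ lies, so that the integration domain is away from the origin.

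Splitting $\ov{f}=(\rho^{\alpha}\ov{f})\,\rho^{-\alpha}$ inside the integral and applying Cauchy--Schwarz then yields
$$|h(x_2)|^2\leq 4\,\|\ov{f}\|_{L^2_\alpha(\R)}^2\int_{|y_2|\geq|x_2|}\rho(y_2)^{-2\alpha}\,dy_2\leq C\,\rho(x_2)^{1-2\alpha}\,\|f\|_{L^2_\alpha(Z)}^2,$$
where the last step relies on $\alpha>1/2$ to ensure integrability of the weight at infinity and to identify the decay rate $\rho^{1-2\alpha}$. Multiplying by $\rho(x_2)^{2(\alpha-1-\epsilon)}$ and integrating in $x_2$ gives
$$\|h\|_{L^2_{\alpha-1-\epsilon}(\R)}^2\leq C\,\|f\|_{L^2_\alpha(Z)}^2\int_{\R}\rho(x_2)^{-1-2\epsilon}\,dx_2,$$
and the last integral converges precisely because $\epsilon>0$, which delivers the claim.

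The main obstacle is verifying that the orthogonality condition is genuinely the mechanism that makes the bound work: without it, $\sgn*\ov{f}$ would tend to non-zero constants (of opposite signs) at $\pm\infty$ equal to $\mp\int_\R\ov{f}$, which is incompatible with any decaying weighted estimate. Once orthogonality is invoked, the remaining analysis is a sharp balancing of exponents, and the strict loss $\epsilon>0$ in the target weight turns out to be unavoidable since $\int_\R\rho^{-1}\,dx_2$ diverges logarithmically, so the borderline case $\epsilon=0$ cannot be attained by this direct method.
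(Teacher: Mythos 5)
Your proof is correct and follows essentially the same route as the paper: both arguments use the orthogonality $\int_\R \ov{f}=0$ to reduce $\sgn*\ov{f}$ to a single tail integral on the side of the origin containing $x_2$, apply a weighted Cauchy--Schwarz to obtain the pointwise decay $\rho(x_2)^{1/2-\alpha}$ (using $\alpha>1/2$), and then integrate against $\rho^{2(\alpha-1-\epsilon)}$, with convergence exactly because $\epsilon>0$. The only cosmetic difference is that the paper first subtracts the asymptotic constant $\sgn(x_2)\int_\R\ov{f}$ and invokes the polar condition at the end, whereas you invoke it at the outset.
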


\begin{proof}
In a first step $f$ does not   satisfy the polar condition.
Under sufficient integrability conditions, one writes ~:
$$
(\sgn(x_2) * \ov{f})(\bfx)  = - \int_{y_2 < x_2} \ov{f} dy_2 + \int_{y_2 > x_2} \ov{f} dy_2.
$$
For $x_2$ tending to infinity, $\ov{f}*\sgn(x_2)$ behaves as
$\sgn(x_2) \int_\RR \ov{f}$. Indeed
$$
\begin{aligned}
&\left| (\sgn(x_2) * \ov{f})(\bfx) -\sgn(x_2) \int \ov{f} \right| = 2
\left|  \left( \int^{x_2}_{-\infty} \ov{f}(s) ds \right) \chiu{\RR_-}(x_2) - \left( \int_{x_2}^\infty \ov{f}(s) ds \right) \chiu{\rr}(x_2) \right| \\
& \hspace{1cm}\leq 2\nrm{f}{L^2_\alpha(Z)}
\left( 
         \left\{\int_{-\infty}^{x_2} \rho^{-2\alpha}(y_2) dy_2 \right\}^{\ud}  \chiu{\RR_-}(x_2)+  
         \left\{\int^{\infty}_{x_2} \rho^{-2\alpha}(y_2) dy_2 \right\}^{\ud} \chiu{\RR_+}(x_2) 
\right) \\
& \hspace{1cm}\leq C \nrm{f}{L^2_\alpha(Z)} \rho^{\ud-\alpha}(x_2) .
\end{aligned}
$$
Then taking the square, multiplying by $\rho^{2\beta}$ and integrating wrt $x$
$$
\nrm{  \sgn(x_2) * \ov{f}-\sgn (x) \int \ov{f} }{L^2_\beta(Z)}^2  \leq C  \nrm{f}{L^2_\alpha(Z)}^2 \int_{\RR} \rho^{1-2\alpha+2\beta}(x_2) dx_2,
$$
which is bounded, provided that $\beta < \alpha -1$. Taking into account the polar condition $f\bot \R$ gives then the claim. 
\end{proof}

\noindent Similarly to Theorem \ref{thm.ortho.poly}, thanks to the previous lemma, one gets that
\begin{thm}\label{thm.ortho.cst}
Let $f\in L^2_{\alpha}(Z) \bot \RR$ then for any $\alpha \in ]\ud,\td]$ one has
$$
\nrm{G_2 * f}{L^2_{\alpha-2-\e}(Z)/\R} \leq C \nrm{f}{L^2_\alpha(Z)}.
$$
\end{thm}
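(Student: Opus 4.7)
The plan is to mimic the duality argument used for Theorem \ref{thm.ortho.poly}, but to truncate the Taylor expansion of $|y|*\ov{\varphi}$ to a single term (instead of two) so that the weaker orthogonality $f\bot\R$ suffices to cancel the leading coefficient. Since $G_2(\y)=-\ud|y_2|$ depends only on $y_2$, a first application of Fubini gives $(G_2*f)(\x)=-\ud(|y|*\ov f)(x_2)$, reducing everything to one dimension.

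To control the quotient norm $\nrm{G_2*f}{L^2_{\alpha-2-\e}(Z)/\R}$, I would test against $\varphi\in L^2_{2-\alpha+\e}(Z)$ subject to the mean-zero constraint $\int_Z\varphi\,d\x=0$, which is the dual of quotienting by constants on the left and is meaningful because in the range $\alpha\in\,]\ud,\td]$ one has $\R\subset L^2_{\alpha-2-\e}(Z)$. Approximating $\varphi$ by $\varphi_\delta\in\cSd(Z)$ with $\int_Z\ov{\varphi}_\delta=0$, Fubini yields
$$
\int_Z(G_2*f)\,\varphi_\delta\,d\x = -\ud\int_\R \ov f(y)(|y|*\ov{\varphi}_\delta)(y)\,dy.
$$
Since $|y|*\ov{\varphi}_\delta$ is $\cC^1$ (its distributional second derivative equals $2\ov{\varphi}_\delta$), Taylor's formula at $0$ writes
$$
(|y|*\ov{\varphi}_\delta)(y) = (|y|*\ov{\varphi}_\delta)(0) + \int_0^y (\sgn*\ov{\varphi}_\delta)(s)\,ds,
$$
and the constant term is annihilated by the orthogonality $\int_\R\ov f=0$. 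Only the remainder $-\ud\int_\R\ov f(y)\int_0^y(\sgn*\ov{\varphi}_\delta)(s)\,ds\,dy$ has to be controlled.

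To handle this remainder I would apply Lemma \ref{lem.partie.moyenne} with $\varphi_\delta$ in place of $f$ and $\alpha':=2-\alpha+\e$ in place of $\alpha$: the hypothesis $\varphi_\delta\bot\R$ holds by construction, and $\alpha'>\ud$ corresponds exactly to the upper bound $\alpha\leq\td$, which is why the argument is restricted to this range. The lemma then provides $\nrm{\sgn*\ov{\varphi}_\delta}{L^2_{1-\alpha+\e-\e'}(Z)}\leq C\nrm{\varphi_\delta}{L^2_{2-\alpha+\e}(Z)}$ for any $\e'<\e$. A Cauchy--Schwarz inequality on $(0,y)$ with weight $\rho^{-2(1-\alpha+\e-\e')}$ --- whose integral grows as $\rho(y)^{2\alpha-1-2\e+2\e'}$ for $y\to\infty$ thanks to $\alpha>\ud$ --- yields the pointwise estimate
$$
\Big|\int_0^y(\sgn*\ov{\varphi}_\delta)(s)\,ds\Big|\leq C\nrm{\varphi_\delta}{L^2_{2-\alpha+\e}(Z)}\,\rho(y)^{\alpha-\ud-\e+\e'}.
$$
A second Cauchy--Schwarz in $y$ against $\ov f$ brings in the factor $\bigl(\int_\R\rho^{-1-2(\e-\e')}\bigr)^{1/2}$, which is finite precisely when $\e'<\e$; passing to the limit $\delta\to 0$ and taking the supremum over admissible $\varphi$ concludes. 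The delicate point I expect is the clean matching of the quotient $/\R$ on the left with the mean-zero constraint on the right throughout the full interval $\alpha\in\,]\ud,\td]$, especially at the endpoint $\alpha=\td$ where constants lie right on the boundary of $L^2_{\alpha-2-\e}(Z)$ and strict positivity of $\e$ is essential to secure integrability.
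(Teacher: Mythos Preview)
Your approach is essentially the same as the paper's: reduce to one dimension, test by duality against mean-zero $\varphi$ (to encode the quotient by $\R$), Taylor-expand $|y|\ast\bar\varphi_\delta$ to first order, kill the constant via $f\bot\R$, and control the remainder $\int_0^y(\sgn\ast\bar\varphi_\delta)(s)\,ds$ using the machinery behind Lemma~\ref{lem.partie.moyenne}. The argument is correct.

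Two minor differences are worth noting. First, the paper does not invoke the $L^2$ conclusion of Lemma~\ref{lem.partie.moyenne} and then Cauchy--Schwarz on $(0,y)$; instead it reuses the \emph{pointwise} estimate from the proof of that lemma, namely $|(\sgn\ast\bar\varphi_\delta)(s)|\leq C\|\varphi_\delta\|_{L^2_{2-\alpha+\e}}\rho(s)^{\alpha-3/2-\e}$, and integrates it directly. This avoids the auxiliary loss $\e'$ you introduce (your route works too, but one must check that for each $\alpha>\tfrac12$ one can choose $\e'<\e$ with $\e-\e'<\alpha-\tfrac12$ so that the weight integral on $(0,y)$ actually grows; this is harmless but should be said). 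Second, the paper carries out an explicit Fubini computation at the end to identify the limit of $\int_\R f(x)\int_0^x(\varphi\ast\sgn)(s)\,ds\,dx$ with $\int_\R\varphi(s)(|x|\ast f)(s)\,ds$, i.e.\ to check that the bounded linear form you obtain is really the pairing with $G_2\ast f$; you skip this identification, writing $\int_Z(G_2\ast f)\varphi_\delta=-\tfrac12\int_\R\bar f\,(|y|\ast\bar\varphi_\delta)$ as if it were a direct Fubini, which is delicate for $\alpha\leq\tfrac32$ since $|y|\ast\bar f$ need not converge pointwise. This is not a real gap---it is precisely the duality definition of the convolution---but making it explicit is what the paper's closing computation achieves.
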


\begin{proof}
As previously, the proof is essentially 1D. Let us choose $f \in L^2_{\alpha}(\RR)\bot\RR$ and $(\varphi_\delta)_\delta \in \cSp(\RR)\bot\RR$ s.t. $\varphi_\delta \to\varphi$  in $L^2_{2-\alpha +\e}(\RR)\bot\RR$
then $|x|*\varphi_\delta$ is infinitely differentiable and one can apply the Taylor expansion 
with the integral rest ~:
$$
\begin{aligned}
&\int_{\RR} (|x|* \varphi_\delta)(x) f(x)dx  \\
&=  \int_{\RR} \left\{ (|x|*\varphi_\delta)(0)+ \int_0^x (\varphi_\delta*\sgn)(s) ds \right\} f(x) dx \equiv  \int_{\RR} f(x) \int_0^x (\varphi_\delta*\sgn)(s) ds dx.
\end{aligned}
$$
Proceeding as in the proof of  Lemma \ref{lem.partie.moyenne}, one has
$$
\left| \int_0^x \varphi_\delta * \sgn (s) \, ds\right|\,\leq \nrm{\varphi_\delta}{L^2_{2-\alpha+\e}(Z)} \rho^{\alpha -\ud - \e}(x),
$$
leading to 
$$
\nrm{ \int_0^x (\varphi_\delta*\sgn)(s) ds }{L^2_{-\alpha}(\RR)} \leq C \nrm{\varphi_\delta}{L^2_{2-\alpha+\e}(\RR)},
$$
with $\alpha < \td + \e$. Thus one has
$$
\left| \int_{\RR} (|x|* \varphi_\delta)(x) f(x)dx  dy \right| \leq C \nrm{f}{L^2_\alpha(\RR)}\nrm{\varphi_\delta}{L^2_{2-\alpha+\e}(\RR)}\,.
$$
Moreover, one needs that $\RR \in L^2_{-\alpha}(\RR)$, which is true if $\alpha>\ud$. 
It is not difficult to prove by similar arguments that
$$
\lim_{\delta\to 0} \int_0^x (\varphi_\delta *\sgn)(s) ds   =  \int_0^x (\varphi *\sgn)(s) ds  
$$
strongly in the $L^2_{-\alpha}(\RR)$ topology. 
As in the proof of Lemma \ref{lem.partie.moyenne}, by Fubini, 
$$
\int_0^x (\sgn * \varphi)(s) ds = 2 \int_0^x \left\{ \int_{-\infty}^s \varphi(t)dt \chiu{s<0}(s) -  \int^{\infty}_s \varphi(t)dt \chiu{s>0}(s) \right\}ds =: 2  \int_0^x {\mathfrak g} (s) ds. 
$$
As ${\mathfrak g}$ is a $C^1$ function on any compact set in $\RR$, 
one can integrate by parts on $(0,x)$~: 
$$
 \int_0^x {\mathfrak g} (s) ds = \left[ {\mathfrak g}(s) s\right]_0^x -  \int_0^x {\mathfrak g}' (s) \,s \,ds .
$$
Using this in the right hand side of the  previous limit, one writes
$$
\begin{aligned}
J:= &  \int_\RR f(x) \int_0^{x} (\varphi *\sgn)(s) ds dx = 2   \int_\RR f(x) \left\{  {\mathfrak g}(x)x - \int_0^x {\mathfrak g}' (s) \,s \,ds \right\} dx\\
& = 2 \left\{ \int_{\RR_-}  f(x)\, x \int_{-\infty}^x \varphi(t) dt  dx -  \int_{\rr}  f(x) x \int^{\infty}_x \varphi(t) dt  dx - \int_{\RR}  f(x) \int_0^x \varphi(s) \,s \,ds  \right\} \\
& = 2 \int_{\RR}  f(x)\, x \int_{-\infty}^x \varphi(t) dt  dx - 2 \int_{\RR}  f(x) \int_0^x \varphi(s) \,s \,ds \,dx =: A - B.
\end{aligned}
$$
By H{\"o}lder estimates the integrals above are well defined. Fubini's theorem allows then to switch integration order.
Using the orthogonality condition on $\varphi$ and on $f$, one may easily show that
$$
\left\{ 
\begin{aligned} 
A &= 2 \int_{\RR_x}  f(x)\, x \left(\int_{-\infty}^x \varphi(t) dt\right)  dx = \int_\RR f(x) x \left\{ \int_{- \infty}^x \varphi(s) ds - \int_x^\infty \varphi(s) ds \right\} dx \\
&= \int_{\RR_s} \varphi(s) \left\{ \int_{s}^\infty xf(x) dx - \int_{-\infty}^s x f(x)dx \right\} ds, \\
B & = 2 \left\{ \int_{\rr} f(x) \int_0^x s\, \varphi(s) \, ds \, dx - \int_{\RR_-} f(x) \int_x^0 s\, \varphi(s) \, ds \, dx \right\}  \\
& = 2 \left\{ \int_{\rr} s\, \varphi(s) \int_s^\infty f(x) dx ds - \int_{\RR_-}  s\, \varphi(s)\int_{-\infty}^s f(x) dx ds \right\} \\
& = -2 \int_{\RR}  s\, \varphi(s) \int_{-\infty}^s f(x) dx ds = \int_{\RR}  s\, \varphi(s) \left\{  \int_s^\infty f(x) dx ds - \int_{-\infty}^s f(x) dx \right\} ds.
\end{aligned}
\right.
$$
These computations give~:
$$
\begin{aligned}  A - B& = \int_{\RR_s} \varphi(s) \left\{ \int_{s}^\infty (x-s) f(x) dx - \int_{-\infty}^s (x-s) f(x)dx \right\} ds \\
&= \int_\RR \varphi(s) \int_\RR |x-s| f(x) dx ds = \int_{\RR} \varphi(s) (|x|*f)(s) ds.
\end{aligned}
$$
And because
$$
\inf_{\lambda \in \RR} \nrm{|x|*f+\lambda}{L^2_{\alpha -(2 +\e)}(\RR)} = \sup_{\varphi \in L^2_{(2+\e) -\alpha}(\RR)\bot\RR} \frac{ (|x|*f ,\varphi)}{\nrm{\varphi}{L^2_{(2+\e) -\alpha}(\RR)}},
$$
the final claim follows.
\end{proof}

\begin{thm}\label{thm.convo.reg} Assume $\alpha>1/2$ and recall that $q(0,-\alpha)$ is defined by \eqref{def_q_bis}. Then the operator defined by the convolution with the fundamental solution $G$ is a mapping from $L_{\alpha}^2(Z)\bot\PP_{q(0,-\alpha)}^{'\Delta}$ on $\wsd{1}{\alpha-1-\e}{Z}\cap\wsd{2}{\alpha - 2-\e}{Z}/ \PP_{[3/2-\alpha]}^{'\Delta}$ for any $\e>0$.
\end{thm}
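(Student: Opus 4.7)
The plan is to decompose $f$ along its horizontal average $\ov{f}(y_2):=\int_0^1 f(y_1,y_2)\,dy_1$, setting $h:=f-\ov{f}$, and to treat each contribution separately. Lemma \ref{lem.zero.moy} gives $G_2 * h = 0$, and a direct integration using the absence of the $k=0$ Fourier mode in $G_1$ shows $G_1 * \ov{f} = 0$ as well. Hence
$$
G*f \;=\; G_1 * h \;+\; G_2 * \ov{f} \;=:\; u_1+u_2.
$$
Since the polynomials in $\PP'$ depend only on $y_2$, the orthogonality $f\bot\PP_{q(0,-\alpha)}^{'\Delta}$ transfers to $\ov{f}\bot\PP_{q(0,-\alpha)}^{'\Delta}$.

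For $u_1=G_1*h$, Proposition \ref{prop.weight.h1} applied with $\beta=\alpha$ and $\beta'=\alpha-1-\e$ (admissible since $\beta'<\alpha-1/2$) yields
$$
\nrm{u_1}{L^2_{\alpha-1-\e}(Z)}+\nrm{\nabla u_1}{L^2_{\alpha-1-\e}(Z)}\leq C\nrm{h}{L^2_\alpha(Z)},
$$
which already handles the $\wsd{1}{\alpha-1-\e}{Z}$ norm and the zeroth-order term of $\wsd{2}{\alpha-2-\e}{Z}$ (the weight $\rho^{\alpha-2-\e}$ being dominated by $\rho^{\alpha-1-\e}$). For the second-order part, combining $\Delta G=-\delta_0$ with $\Delta G_2=-\delta_{y_2=0}\otimes 1$ yields $\Delta G_1=-\delta_0+\delta_{y_2=0}\otimes 1$, so with $\ov{h}\equiv 0$ one obtains $\Delta u_1=-h$ in distributional sense. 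Thus $\partial_{y_2}^2 u_1=-h-\partial_{y_1}^2 u_1$, with $h\in L^2_\alpha\hookrightarrow L^2_{\alpha-2-\e}$. For $\partial_{y_1}^2 u_1$, Lemma \ref{lem.partial.convo} gives
$$
\cF_k(\partial_{y_1}^2 u_1)(k,x_2)\;=\;-\frac{|\tilde k|}{2}\int_\R e^{-|\tilde k||x_2-y_2|}\cF_k(h)(k,y_2)\,dy_2,\qquad k\in\Z^*,
$$
and a weighted Schur estimate on the kernel $(|\tilde k|/2)\,e^{-|\tilde k||y_2|}$ (whose unit $L^1$-mass, together with the bound $(\rho(x)/\rho(y))^\beta\leq \rho(x-y)^{|\beta|}$ and the exponential concentration on scale $1/|\tilde k|\leq 1/(2\pi)$) provides $L^2_\beta(\R)$-boundedness uniformly in $k\in\Z^*$. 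Summing via Lemma \ref{lem.partial.parseval} then gives $\nrm{\partial_{y_1}^2 u_1}{L^2_{\alpha-2-\e}}\leq C\nrm{h}{L^2_{\alpha-2-\e}}\leq C\nrm{h}{L^2_\alpha}$.

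For $u_2=G_2*\ov{f}$, the $y_1$-derivatives vanish since $u_2$ depends only on $y_2$. The weighted $L^2$ bound splits according to $\alpha$: if $\alpha\in(1/2,3/2]$, then $q(0,-\alpha)=0$ so $\ov{f}\bot\R$ and Theorem \ref{thm.ortho.cst} yields $\nrm{u_2}{L^2_{\alpha-2-\e}/\R}\leq C\nrm{\ov{f}}{L^2_\alpha}$; if $\alpha>3/2$, then $q(0,-\alpha)\geq 1$ so $\ov{f}\bot\PP'_1$ and Theorem \ref{thm.ortho.poly} yields the sharper $\nrm{u_2}{L^2_{\alpha-2-\e}}\leq C\nrm{\ov{f}}{L^2_\alpha}$, without indeterminacy. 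The first derivative $\partial_{y_2}u_2=-(\sgn*\ov{f})/2$ is controlled in $L^2_{\alpha-1-\e}$ by Lemma \ref{lem.partie.moyenne} (again through $\ov{f}\bot\R$), and $\partial_{y_2}^2 u_2=-\ov{f}\in L^2_\alpha\subset L^2_{\alpha-2-\e}$. The constant indeterminacy of Theorem \ref{thm.ortho.cst} matches exactly the quotient $\PP_{[3/2-\alpha]}^{'\Delta}$, which equals $\R$ for $\alpha\in(1/2,3/2]$ and $\{0\}$ otherwise.

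The main technical point is the weighted Schur estimate on the second-order $y_1$-kernel: although the MFT symbol $\tilde k^2/(\tilde k^2+\tilde l^2)$ is uniformly bounded on $\Z^*\times\R$ (giving unweighted $L^2$ control by Plancherel, which suffices only when $\alpha\leq 2+\e$), the weighted estimate for large $\alpha$ requires using the explicit exponential form of $\cF_k(\partial_{y_1}^2 G_1)$ and the fact that its concentration scale $1/|\tilde k|$ dominates the variation scale of the weight $\rho$. Without this observation, naively iterating the $1/2$ weight-loss of Proposition \ref{prop.weight.h1} to second derivatives produces a divergent $|\tilde k|^{1/2}$ factor when summed over $k$, which is precisely the obstruction the Schur argument circumvents.
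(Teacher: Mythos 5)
Your argument is correct in substance, and for the second\--order estimates it takes a genuinely different route from the paper. The paper's own proof obtains the $\wsd{1}{\alpha-1-\e}{Z}$ control exactly as you do (from Theorems \ref{thm.ortho.poly} and \ref{thm.ortho.cst}, Proposition \ref{prop.weight.h1} and Lemma \ref{lem.partie.moyenne}), but then upgrades to second derivatives by a purely PDE argument: since $\Delta(G*f)=f$ in $\cD'_\#(Z)$, a dyadic partition of unity in the $y_2$ direction combined with rescaled interior elliptic regularity on each dyadic block gives the anisotropic bound $\sum_{|\gamma|=2}\|\rho^{-\gamma}D^\gamma u\|^2_{L^2_{\alpha-\e}(Z)}\leq C(\|\Delta u\|^2_{L^2_{\alpha-\e}(Z)}+\|u\|^2_{\wsd{1}{\alpha-1-\e}{Z}})$, from which the $\wsd{2}{\alpha-2-\e}{Z}$ membership follows by taking the weakest weight. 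Your route instead stays on the Fourier side: the explicit kernel $\cF_k(\partial_{y_1}^2G_1)=-\tfrac{|\tilde k|}{2}e^{-|\tilde k||y_2|}$ has unit $L^1$ mass concentrated at scale $1/|\tilde k|\leq 1/(2\pi)$, so Peetre's inequality $(\rho(x)/\rho(y))^{\beta}\leq C\rho(x-y)^{|\beta|}$ and the Schur test give $L^2_\beta(\R)$-boundedness uniformly in $k\in\Z^*$ for \emph{every} $\beta$; this is sharper than what Lemma \ref{coro.convo.expo} would give (your remark about the divergent $|\tilde k|^{1/2}$ factor correctly identifies why Proposition \ref{prop.weight.h1} cannot simply be iterated), and it actually yields the lossless bound $\|\partial^\gamma(G*f)\|_{L^2_\alpha(Z)}\leq C\|f\|_{L^2_\alpha(Z)}$ for $|\gamma|=2$, stronger than the $L^2_{\alpha-2-\e}$ statement of the theorem and than what the paper's interior-regularity route delivers. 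The trade-off is that your argument is tied to the explicit Green kernel of the strip, whereas the paper's localization argument only uses $\Delta u\in L^2_{\alpha}$ and would survive perturbations of the operator.

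Two small points to tighten. First, the $\wsd{2}{\alpha-2-\e}{Z}$ seminorm also contains the mixed derivative $\partial_{y_1}\partial_{y_2}u_1$, which you do not treat; it is covered by the identical Schur argument since $|\cF_k(\partial_{y_1}\partial_{y_2}G_1)|=\tfrac{|\tilde k|}{2}e^{-|\tilde k||y_2|}$ as well, but it should be said. Second, as in the paper, all the kernel manipulations should be carried out for $f\in\cSd(Z)$ (or $\cDd(Z)$) and extended to $L^2_\alpha(Z)$ by the density of Proposition \ref{densite_L2}; and for $\e$ such that $\alpha-1-\e$ or $\alpha-2-\e$ hits a half-integer critical value one should perturb $\e$, which is harmless since $\e>0$ is arbitrary.
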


\begin{proof}
One follows  the same lines as in the proof of Theorem 1 p. 786 in \cite{McOwen.1979}. 
More precisely, by Theorems \ref{thm.ortho.poly} and \ref{thm.ortho.cst},  Prop. \ref{prop.weight.h1} and Lemma \ref{lem.partie.moyenne}, the convolution with $G$ maps also
  $L_{\alpha}^2(Z)\bot\PP_{q(0,-\alpha)}^{'\Delta}$ on $\wsd{1}{\alpha-1-\e}{Z}\slash \PP_{[3/2-\alpha]}^{'\Delta}$.
Thus for $f\in L_{\alpha}^2(Z)\bot\PP_{q(0,-\alpha)}^{'\Delta}$, let $u=G* f \in L^2_{\alpha-2-\e,\#}(Z)/\PP_{[3/2-\alpha]}^{'\Delta}$.
Then for any $\phi \in \cDp(Z)$, $\<\Delta (G*f) , \phi\>=\< f , G*\Delta \phi\> = \<f,\phi\>$ which implies that $\Delta u = f$ in 
the sense of distribution. But since $u \in L^2_{\alpha-2-\e,\#}(Z)/\PP_{[3/2-\alpha]}^{'\Delta}$ and $\Delta u \in L_{\alpha}^2(Z)\bot\PP_{q(0,-\alpha)}^{'\Delta}$, 
using a dyadic partition of unity \cite{CoDauNi.11.Book} and standard inner regularity results (see for instance a similar 
proof in Theorem 3.1 in \cite{NiWa.73} or chap I in \cite{CoDauNi}), one has that:
$$
\begin{aligned}
\sum_{|{\bf \gamma}|=2} \nrm{ \rho^{-\gamma} D^{\bf \gamma} u }{L^2_{\alpha-\e}(Z)}^2 & \leq C \left( \nrm{ \Delta u }{L^2_{\alpha-\e}(Z)}^2 + \nrm{u}{\wsd{1}{\alpha-1-\e}{Z}}^2 \right),
\end{aligned}
$$
where the constant does not depend on $u$. These estimates are not isotropic with respect to the weight. 
Taking the lowest weight in front of the second order derivative ends the proof.
\end{proof}

\subsection{Convolution and duality}

We generalize the latter convolutions to weak data, namely, when 
$f \in \wsd{-1}{\alpha}{Z}$. 
\begin{proposition}\label{prop.conv.dual} For $\beta < \ud $ and $f \in \wsd{-1}{\beta}{Z}\bot\pP_{[\ud+\beta]}$ one has~:
$$
\nrm{ G * f}{\eld{\beta-1-\e}{Z}/\pP_{[\ud-\beta] }} \leq C \nrm{f}{\wsd{-1}{\beta}{Z}},
$$
where $C$ is independent on $f$.  
\end{proposition}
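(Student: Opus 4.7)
The plan is to prove Proposition \ref{prop.conv.dual} by duality from Theorem \ref{thm.convo.reg}. I would begin by realising the quotient norm via the standard identification
\[
\nrm{G*f}{\eld{\beta-1-\e}{Z}/\pP_{[\ud-\beta]}} = \sup_{\varphi}\frac{\vert\langle G*f,\varphi\rangle\vert}{\nrm{\varphi}{L^2_{1-\beta+\e}(Z)}},
\]
the supremum running over $\varphi\in L^2_{1-\beta+\e}(Z)\bot\pP_{[\ud-\beta]}$. The assumption $\beta<\ud$ guarantees that the dual weight exponent $\alpha:=1-\beta+\e$ satisfies $\alpha>\ud$, placing $\varphi$ precisely within the scope of Theorem \ref{thm.convo.reg}.

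Next, using density of $\cDd(Z)$ in the relevant spaces together with the symmetry of $G$ (as a tempered distribution even in each variable), I would establish the transfer identity
\[
\langle G*f,\varphi\rangle = \langle f,G*\varphi\rangle_{\wsd{-1}{\beta}{Z}\times\wsd{1}{-\beta}{Z}}.
\]
Theorem \ref{thm.convo.reg} applied with exponent $\alpha=1-\beta+\e$ demands the orthogonality $\varphi\bot\pP^{'\Delta}_{q(0,\beta-1-\e)}$; by \eqref{def_q_bis}, the polynomial space $\pP^{'\Delta}_{q(0,\beta-1-\e)}$ is contained in $\pP_{[\ud-\beta]}$ for $\e$ small enough and $\beta$ outside the critical set, so the orthogonality built into the supremum domain suffices. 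The theorem then returns
\[
\nrm{G*\varphi}{\wsd{1}{-\beta}{Z}/\pP^{'\Delta}_{[\ud+\beta-\e]}} \leq C\,\nrm{\varphi}{L^2_{1-\beta+\e}(Z)}.
\]

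Finally, since $f$ is orthogonal to $\pP_{[\ud+\beta]}$, a space containing $\pP^{'\Delta}_{[\ud+\beta-\e]}$, the duality bracket $\langle f,G*\varphi\rangle$ passes unambiguously to the quotient, and a direct estimate gives
\[
\vert\langle f,G*\varphi\rangle\vert \leq \nrm{f}{\wsd{-1}{\beta}{Z}}\,\nrm{G*\varphi}{\wsd{1}{-\beta}{Z}/\pP_{[\ud+\beta]}} \leq C\,\nrm{f}{\wsd{-1}{\beta}{Z}}\,\nrm{\varphi}{L^2_{1-\beta+\e}(Z)}.
\]
Taking the supremum over admissible $\varphi$ then yields the proposition. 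The main technical obstacle is the polynomial bookkeeping: one must verify that the harmonic polynomial spaces produced by Theorem \ref{thm.convo.reg}, which carry an $\e$-shift in the integer parts, are compatible both with the quotient appearing in the statement and with the orthogonality hypothesis on $f$. In the generic case this is immediate, whereas in critical configurations where $\ud\pm\beta\in\Z$ one would either exploit the freedom in choosing $\e$ or slightly enlarge the ambient polynomial space, as done in the earlier treatment of critical weights in the paper.
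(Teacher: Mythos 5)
Your proposal is correct and follows essentially the same route as the paper's proof: a duality argument realizing the quotient norm as a supremum over $\varphi\in L^2_{1-\beta+\e}(Z)\bot\pP_{[1/2-\beta]}$, applying Theorem \ref{thm.convo.reg} with $\alpha=(1+\e)-\beta$, and checking that the polynomial spaces match up (your bookkeeping of the $\e$-shifts in the integer parts is the same computation the paper performs via \eqref{def_q_bis}). The only addition is your explicit mention of the transfer identity $\langle G*f,\varphi\rangle=\langle f,G*\varphi\rangle$, which the paper uses implicitly.
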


\begin{proof}
Using Theorem \ref{thm.convo.reg} with $\alpha=(1+\e)-\beta$, which is possible since 
$\beta<1/2$, one has
$$
\begin{aligned}
\< f,G*\varphi \>_{\wsd{-1}{\beta}{Z}\bot \pP_{[\ud+\beta]}  \times \wsd{1}{-\beta}{Z}\slash \pP_{[\ud+\beta]}} &  \leq 
\nrm{ f}{\wsd{-1}{\beta}{Z}} \nrm{G*\varphi}{\wsd{1}{-\beta}{Z}\slash \pP_{[\ud+\beta]}}\\
& \leq  C \nrm{ f}{\wsd{-1}{\beta}{Z}} \nrm{\varphi}{\eld{(1+\e)-\beta}{Z}\bot \pP_{q(0,\beta-(1+\e))}}.
\end{aligned}
$$
as $\e$ is positive and arbitrarily small $q(0,\beta-(1+\e))=[1/2-\beta]$.
Hence taking the supremum over all $\varphi\in \eld{\alpha}{Z}\bot \pP_{[1/2-\beta]}$, ends the proof.
\end{proof}

\section{The Laplace equation in a periodic infinite strip}
\label{section_Laplace}

In this section we study the problem 
\begin{equation}\label{Laplace}
\left\{ 
\begin{aligned}
-&\Delta u = f \text{ in } Z,  \\
&u\text{ is 1-periodic in the }y_1\mbox{ direction}
\end{aligned}
\right.
\end{equation}
in the variational context. Firstly, 
we characterize of the kernel of the Laplace operator.

\begin{proposition}
\label{prop_noyau}
Let $m\geq1$ be an integer, $\alpha$ be a real number and $j=\min\{q(m,\alpha),1\}$ where $q(m,\alpha)$ is defined by \eqref{def_q}. A function $u\in\Hmad(Z)$ satisfies 
$\Delta u=0$ if and only if $u\in\pP_j$.
\end{proposition}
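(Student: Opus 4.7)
The plan is to verify the ``if'' direction by direct inspection and to reduce the ``only if'' direction to a distributional structure result via the mixed Fourier transform (MFT).

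For the ``if'' direction, I recall that $\pP_j\subset\PP_j'$ consists of harmonic polynomials depending only on $y_2$; since $p(y_2)$ is harmonic iff $p''\equiv 0$, one has $\pP_0=\R$, $\pP_1=\R\oplus\R y_2$, $\pP_j=\pP_1$ for every $j\geq 1$, and $\pP_j=\{0\}$ for $j<0$. By the very definition \eqref{def_q} of $q(m,\alpha)$, each monomial $y_2^i$ with $0\leq i\leq q(m,\alpha)$ belongs to $\Hmad(Z)$, so $\pP_{\min(q(m,\alpha),1)}\subset\Hmad(Z)$, and harmonicity is immediate.

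For the ``only if'' direction, I would let $u\in\Hmad(Z)$ satisfy $\Delta u=0$. The inclusions \eqref{inclusions_poids}--\eqref{inclusions_poids_log} embed $u$ into some $L^2_\beta(Z)$ (with at most a bounded logarithmic factor), and Cauchy--Schwarz against test functions in $\cSd(Z)$ then gives $\Hmad(Z)\hookrightarrow\cSd'(Z)$. Applying the MFT to $\Delta u=0$ and using Proposition \ref{prop.fourier.dirac}(iii), I would obtain
$$
-(\ti{k}^2+\ti{l}^2)\,\cF(u)(k,l)=0\quad\text{in }\tS'(\Gamma).
$$
For every $k\neq 0$ the multiplier is bounded below by $\ti{k}^2>0$ on all of $\R$, hence $\cF(u)(k,\cdot)\equiv 0$. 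For $k=0$ the equation reduces to $\ti{l}^2\,\cF(u)(0,\cdot)=0$, which forces $\cF(u)(0,\cdot)$ to be supported at $\{l=0\}$; by the classical structure theorem for tempered distributions supported at a single point, $\cF(u)(0,\cdot)$ is a finite combination of $\delta,\delta',\delta'',\dots$, and the extra constraint $\ti{l}^2T=0$ allows only $T=c_0\delta+c_1\delta'$.

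Inverting the MFT (Proposition \ref{prop.fourier.s}) identifies these two atoms with (scalar multiples of) the functions $1$ and $y_2$, so $u(y_1,y_2)=a+by_2$ for some constants $a,b\in\R$. Requiring $u\in\Hmad(Z)$ together with the definition \eqref{def_q} of $q(m,\alpha)$ then admits only those pairs $(a,b)$ for which $u\in\pP_{\min(q(m,\alpha),1)}=\pP_j$, completing the argument. The main technical point is the very first step --- the embedding $\Hmad(Z)\hookrightarrow\cSd'(Z)$, particularly in the critical regime $\alpha\in\{1/2,\dots,m-1/2\}$ where logarithmic weights enter --- but since $(\ln(1+\rho^2))^{-1}$ is bounded above, the polynomial-growth argument carries through with no real modification.
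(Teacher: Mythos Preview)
Your argument is correct and follows essentially the same route as the paper: apply the MFT to the harmonic equation, use the structure theorem for tempered distributions supported at a point to see that $u$ must be a polynomial in $y_2$, and then invoke membership in $\Hmad(Z)$ to cut down the degree. If anything, your version is slightly more explicit than the paper's, since you spell out the embedding $\Hmad(Z)\hookrightarrow\cSd'(Z)$ and use the constraint $\ti{l}^{\,2}T=0$ directly to reduce to $T=c_0\delta+c_1\delta'$, whereas the paper leaves these points implicit.
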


\begin{proof}
Since $j\leq1$, it is clear that if $u\in\pP_j$, then $\Delta u=0$.
Conversely, let $u\in\cSd'(Z)$ satisfies $\Delta u=0$.  We apply the MFT~:
in the sense defined in Definition \ref{defi.fourier.distrib}, one has 
$$
(\ti{k}^2 + \ti{l}^2) \cF(u) =0 ,\quad \forall k \in \Z , \text{{\nobreakspace}a.e. } l \in \RR,
$$
which implies that 
$$
\cF(u)  = \begin{cases} \sum_{j =0}^{p}  \delta_{y_2=0}^j (l)& \text{ if } k \equiv 0 \\
0 & \text{otherwise}
\end{cases}
$$
where $p$ is a non negative integer.
A simple computation shows that $\cF^{-1} ( \cF(u) ) = \sum_{j=0}^p (i y_2)^j$, indeed
$$
\begin{aligned}
& \left< \cF^{-1}\left(\cF(u) \right) , \varphi \right> _{\cSd',\cSd} 
= \left< \cF(u), \breve{\cF}(\varphi)\right>_{\tS'(\Gamma),\tS(\Gamma)} 
 = \left< \sum_{j=0}^p \delta_0^j, \breve{\cF}(\varphi)(0,\cdot) \right>_{\cS'(\RR)\times\cS(\RR)} \\
& = \sum_{j=0}^p (-1)^j \left< \delta_0, \partial_j {\hat \varphi} \right>_{\cS'(\RR)\times\cS(\RR)} 
 = \sum_{j=0}^p (-1)^j \left< \delta_0, \widehat{((i y_2)^j \varphi)} \right>_{\cS'(\RR)\times\cS(\RR)} \\
& = \sum_{j=0}^p (-1)^j \int_{\RR} (iy_2)^j \varphi (\y) d \y 
 \quad = \left< \sum_{j=0}^p (-i y_2)^j , \varphi \right>_{\cSd'\times\cSd},
\end{aligned}
$$
where $\hat{\varphi}$ denotes the usual 1-dimensional Fourier transform on $\RR$.
\end{proof}

\noindent We  establish a Poincar{\'e}-Wirtinger's type inequality. 
\begin{lemma}\label{lem.poincare.writinger}
 For every $u \in \wsd{1}{\alpha}{Z}$ with $\alpha \in \RR$, one has
$$
\nrm{ u - \ov{u }}{L^2_\alpha (Z)} \leq \snrm{ u }{\wsd{1}{\alpha}{Z}},
$$
\end{lemma}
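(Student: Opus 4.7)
The plan is to exploit the $1$-periodicity in the $y_1$ direction through a Fourier series expansion in that variable, then use Parseval's identity and the spectral gap that appears once the zero mode is removed. I would first establish the inequality on the dense subspace $\cDd(Z)$, then extend by continuity using Proposition \ref{prop_densite}.

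For $u \in \cDd(Z)$, smoothness guarantees that the Fourier series
$$
u(y_1,y_2) = \sum_{k\in\Z} c_k(y_2)\, e^{2\pi i k y_1},
\qquad
c_k(y_2) := \int_0^1 u(y_1,y_2)\, e^{-2\pi i k y_1}\, dy_1,
$$
and its termwise $y_1$-derivative converge in the classical sense, with $c_0(y_2) = \ov{u}(y_2)$. Consequently
$$
u - \ov{u} = \sum_{k\in\Z^*} c_k(y_2)\, e^{2\pi i k y_1},
\qquad
\partial_1 u = \sum_{k\in\Z^*} 2\pi i k\, c_k(y_2)\, e^{2\pi i k y_1}.
$$
Since the weight $\rho(y_2)^{2\alpha}$ depends only on $y_2$, it commutes with Parseval's identity in $y_1$, yielding
$$
\|u-\ov{u}\|_{L^2_\alpha(Z)}^2
= \int_{\R} \rho(y_2)^{2\alpha} \sum_{k\in\Z^*} |c_k(y_2)|^2\, dy_2,
$$
$$
\|\partial_1 u\|_{L^2_\alpha(Z)}^2
= \int_{\R} \rho(y_2)^{2\alpha} \sum_{k\in\Z^*} 4\pi^2 k^2\, |c_k(y_2)|^2\, dy_2.
$$
Because $4\pi^2 k^2 \geq 1$ for every $k \in \Z^*$, one deduces the pointwise-in-$y_2$ domination of integrands, hence
$$
\|u - \ov{u}\|_{L^2_\alpha(Z)}^2 \leq \|\partial_1 u\|_{L^2_\alpha(Z)}^2 \leq |u|_{\wsd{1}{\alpha}{Z}}^2.
$$

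For arbitrary $u \in \wsd{1}{\alpha}{Z}$, invoke Proposition \ref{prop_densite} to get a sequence $u_n \in \cDd(Z)$ converging to $u$ in $\wsd{1}{\alpha}{Z}$. Applying the smooth-case estimate to $u_n - u_m$ shows that $(u_n - \ov{u_n})_n$ is Cauchy in $L^2_\alpha(Z)$; its limit agrees with $u - \ov{u}$ in $\cDd'(Z)$, so the estimate passes to the limit.

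The only real subtlety is that, for the critical value $\alpha = 1/2$, the function $u$ itself need not belong to $L^2_\alpha(Z)$, so the inequality must be read as a statement about the cancellation produced by subtracting the horizontal average: removing the $k=0$ Fourier mode is precisely what allows $u-\ov{u}$ to sit in an unweighted--at--infinity $L^2$ space. The factor of $1$ on the right-hand side reflects the spectral gap $4\pi^2 > 1$ between the zero mode and the first nontrivial one, so no sharper tracking of the constants is needed.
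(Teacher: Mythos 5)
Your proof is correct and follows essentially the same route as the paper: expand in Fourier series in $y_1$, use Parseval together with the fact that $|k|\geq 1$ for the nonzero modes, and then integrate the slice-wise Poincar\'e--Wirtinger inequality against the weight $\rho(y_2)^{2\alpha}$. The only cosmetic difference is that the paper argues directly on almost every horizontal slice using the local $L^2$ regularity of $u$ and $\partial_1 u$, whereas you first work on $\cDd(Z)$ and pass to the limit by density; both are fine.
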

\begin{proof}
If $u \in  \wsd{1}{\alpha}{Z}$ then $\partial^\lambda u \in L^2_{\loc}(Z)$ for all $0 \leq |{\nobreakspace}\lambda | \leq 1$. In particular, 
almost everywhere in $y_2$ one has $\partial^\lambda u (\cdot, y_2)$ in $L^2(0,1)$. Applying Parseval in the $y_1$ 
direction one gets~:
$$
\nrm{ u - \ov{u} }{L^2(0,1)}^2  =  \sum_{k \in \Z^*} | \hat{u}(k) |^2 \text{{\nobreakspace}and }
 \nrm{ \partial_{1} u}{L^2(0,1)}^2  =  \sum_{k \in \Z^*} |k|^2 | \hat{u}(k) |^2 
$$
for almost every $y_2 \in \RR$. This obviously gives
$$
\nrm{ u - \ov{u} }{L^2(0,1)}^2  \leq \nrm{ \partial_{1} u}{L^2(0,1)}^2  \quad \text{{\nobreakspace}a.e. } y_2 \in \RR.
$$
Integrating then with respect to the vertical weight, one gets the desired estimates.
\end{proof}
\noindent In order to state  existence and uniqueness results for solutions of  \eqref{Laplace}, 
we  first deal with the Laplace operator in the truncated domain $Z_R:=]0,1[\times (]-\infty,-R[\cup]R,+\infty[)$~: 
\begin{equation}\label{trunc}
\left\{ 
\begin{aligned}
&-\Delta u = f & \text{ in } Z_R,  \\
& u = 0  & \text{ on } y_2= \pm R.  \\ 
\end{aligned}
\right.
\end{equation}
\begin{lemma}\label{lem.inf.sup.trunc}
Let $\alpha$ be any real number.
If $f \in\wsd{-1}{\alpha}{Z} $ there exists $R(\alpha)$ large enough such that  problem \eqref{trunc} has a unique solution $u \in \wsd{1}{\alpha}{Z_{R(\alpha)}}$.
\end{lemma}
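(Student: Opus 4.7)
The plan is to set \eqref{trunc} in a variational form and invoke the Banach–Nečas–Babuška (BNB) theorem with test and trial spaces in duality with respect to the weights. Let $X_R$ denote the closure of $\cDd(Z_R)$ in $\wsd{1}{\alpha}{Z_R}$ (the periodicity in $y_1$ is built into $\cDd$, and the compact support in the $y_2$ direction encodes the Dirichlet trace at $y_2 = \pm R$), and let $Y_R$ denote the analogous closure in $\wsd{1}{-\alpha}{Z_R}$. The bilinear form $a(u,v) := \int_{Z_R} \nabla u \cdot \nabla v \, d\y$ is continuous on $X_R \times Y_R$: factoring the integrand as $(\rho^\alpha \nabla u)\cdot(\rho^{-\alpha} \nabla v)$ and applying Cauchy–Schwarz yields $|a(u,v)| \leq |u|_{\wsd{1}{\alpha}{Z_R}} |v|_{\wsd{1}{-\alpha}{Z_R}}$. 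The datum $f$ defines a continuous linear form $v\mapsto \langle f,v\rangle$ on $Y_R$ by restriction.

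The key step is the inf-sup estimate. For $u \in X_R$, take $v := \rho^{2\alpha} u$; by the isomorphism \eqref{multiplication_poids}, $v \in Y_R$ with $\|v\|_{Y_R}\leq C\|u\|_{X_R}$. Differentiating and exploiting $|\partial_2\rho|\leq 1$ and $\rho\geq R$ on $Z_R$,
$$
a(u,v) = \int_{Z_R}\rho^{2\alpha}|\nabla u|^2 d\y + 2\alpha\int_{Z_R}\rho^{2\alpha-1}(\partial_2\rho)\,u\,\partial_2 u\,d\y \geq |u|_{\wsd{1}{\alpha}{Z_R}}^2 - \frac{2|\alpha|}{R}\|\rho^{\alpha}u\|_{L^2(Z_R)}|u|_{\wsd{1}{\alpha}{Z_R}}.
$$
The weighted Poincaré estimate \eqref{Poincare_y2_ZR} from Lemma \ref{lemme_3.2} gives $\|\rho^\alpha u\|_{L^2(Z_R)}\leq C_\alpha^1 |u|_{\wsd{1}{\alpha}{Z_R}}$, so
$$
a(u,v) \geq \left(1 - \tfrac{2|\alpha|C_\alpha^1}{R}\right)|u|_{\wsd{1}{\alpha}{Z_R}}^2.
$$
Choosing $R(\alpha)$ with $2|\alpha|C_\alpha^1/R(\alpha) \leq 1/2$, and using \eqref{Poincare_ZR} once more to pass from semi-norm to norm, yields $a(u,v)\geq \beta(\alpha)\|u\|_{X_R}^2$ with $\beta(\alpha)>0$, hence the inf-sup condition.

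The dual non-degeneracy — $a(u,v)=0$ for all $u\in X_R$ implies $v=0$ — follows from the symmetric construction: given $v \in Y_R$, test against $u := \rho^{-2\alpha} v \in X_R$ (again by \eqref{multiplication_poids}) and repeat the computation with the roles of $\alpha$ and $-\alpha$ interchanged, using the Poincaré estimate for index $-\alpha$ on $Z_R$. Enlarging $R(\alpha)$ if necessary, both conditions hold simultaneously. BNB then delivers a unique $u\in X_R$ with $a(u,v)=\langle f,v\rangle$ for all $v\in Y_R$, together with the stability bound $\|u\|_{X_R}\leq C\|f\|_{\wsd{-1}{\alpha}{Z}}$.

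The principal obstacle is the loss of symmetric coercivity: taking $u$ as its own test function fails for $\alpha\neq 0$, and the natural remedy $v=\rho^{2\alpha}u$ generates an unwanted cross term $\int \rho^{2\alpha-1}(\partial_2\rho)\,u\,\partial_2 u$, controlled only by $|\alpha|/\rho\leq|\alpha|/R$. It is precisely the truncation $Z_R$ that makes this factor arbitrarily small, allowing the cross term to be absorbed into the principal part via Lemma \ref{lemme_3.2}. This is why the argument is confined to $Z_R$ and why $R$ must be chosen depending on $\alpha$; on the full strip $Z$ the absorption is impossible and a different strategy is needed.
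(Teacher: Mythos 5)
There is a genuine gap in your inf-sup estimate, at the step where you absorb the cross term. Lemma \ref{lemme_3.2} with $m=1$ controls the zeroth-order part of the $\wsd{1}{\alpha}{Z_R}$-norm, i.e.\ it gives $\|\rho^{\alpha-1}u\|_{L^2(Z_R)}\leq C_\alpha^1\,\|\rho^{\alpha}\partial_2 u\|_{L^2(Z_R)}$: the Hardy inequality in the unbounded $y_2$ direction always shifts the weight by one power. It does \emph{not} give the weight-preserving bound $\|\rho^{\alpha}u\|_{L^2(Z_R)}\leq C_\alpha^1\,|u|_{\wsd{1}{\alpha}{Z_R}}$ that you invoke, and that bound is in fact false on $Z_R$: taking $u(\y)=\phi\bigl((y_2-T)/L\bigr)$ with $\phi$ a fixed bump and $T\gg L\to\infty$, one gets $\|\rho^{\alpha}u\|_{L^2}/\|\rho^{\alpha}\nabla u\|_{L^2}\sim L$, unbounded (adding $\|\rho^{\alpha-1}u\|_{L^2}$ to the right-hand side does not help). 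The consequence is that you cannot have both the factor $1/R$ and a Poincar\'e constant: either you write $\rho^{2\alpha-1}\leq\rho^{2\alpha}/R$ and then need the false weight-preserving inequality, or you split $\rho^{2\alpha-1}=\rho^{\alpha-1}\cdot\rho^{\alpha}$ and apply the genuine Hardy estimate, in which case the cross term is bounded by $2|\alpha|C_\alpha^1\,|u|^2_{\wsd{1}{\alpha}{Z_R}}$ with a constant that does not decay as $R\to\infty$ (the sharp Hardy constant on $(R,\infty)$ is scale-invariant, hence $R$-independent) and cannot be absorbed for general $\alpha$.

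The paper's proof repairs precisely this point by decomposing $u$ into its horizontal mean $\overline{u}$ and the fluctuation $u-\overline{u}$, taking $v=\rho^{2\alpha}(u-\overline{u})+\psi(u)$ where $\psi(u)$ is a primitive of $\rho^{2\alpha}\partial_{y_2}\overline{u}$. The primitive reproduces $\rho^{2\alpha}\partial_{y_2}\overline{u}$ in $\nabla v$ with no lower-order term, so the only cross term involves $u-\overline{u}$; for that term the Poincar\'e--Wirtinger inequality of Lemma \ref{lem.poincare.writinger}, which acts in the \emph{compact} periodic direction and therefore preserves the weight, $\|\rho^{\alpha}(u-\overline{u})\|_{L^2}\leq\|\rho^{\alpha}\partial_1 u\|_{L^2}$, is available, and combined with $|\nabla(\rho^{2\alpha})|\leq 2|\alpha|\rho^{2\alpha}/\rho(R)$ it produces the absorbable factor $2|\alpha|/\rho(R)$. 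Your overall framework (Banach--Ne\v{c}as--Babu\v{s}ka with spaces in weighted duality, the test function $\rho^{2\alpha}u$ modulo the mean, continuity of $a$, the symmetric treatment of the adjoint, and the use of \eqref{Poincare_ZR} to pass from semi-norm to norm) matches the paper; what is missing is the mean/fluctuation splitting, and without it the key estimate fails.
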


\begin{proof}
First we notice that $\wsd{-1}{\alpha}{Z}  \subset \wsd{-1}{\alpha}{Z_R} $.
Then the proof relies on an {\em inf-sup} argument similar to the one used in \cite{Ba.Siam.76} (the main difference being the nature of weights : in \cite{Ba.Siam.76} the author derives similar estimates for exponential weights on a half strip). Indeed, we set
$$
v := \omega^2  ( u - \ov{u}) + \psi(u),{\nobreakspace}\quad \omega:=\rho^\alpha, \text{ and  }
\psi(u) :=\left\{
\begin{aligned}
& \int_R^{y_2}\omega^2\partial_{y_2}\ov{u} (s)\,ds & \text{ if }y_2>R\\
& 0 &  \text{ if }|y_2|\leq R\\
& - \int_{y_2}^{-R}\omega^2 \partial_{y_2}\ov{u}(s)\,ds & \text{otherwise}.
\end{aligned}\right.
$$
First of all, we check that $v \in \wsd{1}{-\alpha}{Z_R}$. 
$$
\nabla v = \nabla (\omega^2) (u - \ov{u}) + \omega^2 \nabla \ov{u}.
$$
An application of Lemma \ref{lem.poincare.writinger} proves  that
$$
\nrm{\nabla v}{L^2_{-\alpha}(Z_R)} \leq C \snrm{u}{\wsd{1}{\alpha}{Z_R}}.
$$
which guarantees, through Hardy estimates, that indeed $v$ belongs to $\wsd{1}{-\alpha}{Z_R}$.
Secondly, we write
$$
a(u,v) = \int_{Z_R} \nabla u \cdot \nabla v \; d\y = \int_{Z_R} |\nabla u |^2 \omega^2 \; d\y  + \int_{Z_R} \nabla u \cdot \nabla (\omega^2) \; ( u - \ov{u})  \; d\y .
$$
Using Cauchy-Schwarz, we estimate the formula above as~:
$$
\int_{Z_R} \nabla u \cdot \nabla (\omega^2)\; ( u - \ov{u})  \; d\y \leq  2 \left( \int_{Z_R} |\nabla u |^2 \omega^2 \; d\y  \right)^{\ud} \left( \int_{Z_R} |u -\ov{u} |^2 |\nabla \omega|^2  \; d\y  \right)^{\ud}. 
$$
Since $|\nabla \rho|<1$, one also has that 
$$
 \frac{|\nabla \omega|^2}{\omega^2 }  \leq \frac{\alpha^2 }{\rho^2(R)} , \quad \forall \y \in Z_R,
$$
which gives then
$$
\int_{Z_R} \nabla u \cdot \nabla (\omega^2) \;  ( u - \ov{u})  \; d\y \leq  \frac{|\alpha|}{\rho(R)}\int_{Z_R} |\nabla u |^2 \omega^2 \; d\y .
$$
This in turn implies  that
$$
a(u,v) \geq \left(1-2 \frac{|\alpha|}{\rho(R)}\right)  \snrm{ u }{\wsd{1}{\alpha}{Z_R}}^2 \geq C(R,\alpha)  \snrm{ u }{\wsd{1}{\alpha}{Z_R}}  \snrm{ v }{\wsd{1}{-\alpha}{Z_R}}.
$$
For every fixed $\alpha$ there  exists $R$ large enough s.t. $C(R,\alpha)>0$.
This proves that the operator $a(\cdot,v)$ is onto from $\wsd{1}{\alpha}{Z}$ to $\wsd{-1}{\alpha}{Z}$. 
In the same way, the adjoint operator is injective. Indeed (taking $\omega=\rho^{-\alpha}$ above), 
for all $u \in \wsd{1}{-\alpha}{Z_R}$ there exists a $v \in  \wsd{1}{\alpha}{Z_R}$ s.t.
$$
a(u,v) \geq \left(1 - 2 \frac{|\alpha|}{\rho(R)}\right)  \snrm{ u }{\wsd{1}{-\alpha}{Z_R}}^2 \geq C(R,\alpha)  \snrm{ u }{\wsd{1}{-\alpha}{Z_R}}  \snrm{ v }{\wsd{1}{\alpha}{Z_R}}.
$$
and one concludes using the classical Banach-Babu{\v s}ka-Ne{\v c}as result (see Theorem 2.6 p. 85 in \cite{ErGu.04.book}). 
\end{proof}

\noindent We next solve the Laplace equation in the domain $Z_R\cup Z^R$.

\begin{lemma}\label{lem.sauts}
Provided that $f \in \wsd{-1}{\alpha}{Z}$, there exists a unique solution $u_0 \in \wsd{1}{\alpha}{Z}$ solving:
\begin{equation}\label{eq.sys.sauts} 
\left\{ 
\begin{aligned} 
& -\Delta u_0 = f & \text{ in } Z_R\cup Z^R, \\
& u_0 = 0 & \text{ on } \{y_2 = \pm R\}. 
\end{aligned}  
\right. 
\end{equation}
Moreover, if $f\bot\R$, and setting 
$$
\ov{h}_\pm := \left< \left[ \partial_{y_2} u_0 \right] , 1 \right>_{H_{\#}^{-\ud}(\{y_2= \pm R\}),H_{\#}^{\ud}(\{y_2= \pm R\})}, 
$$
where the brackets $[ \cdot ]$ denote the jump across interfaces $\{y_2 =\pm R\}$, i.e. 
$$
 \left[ \partial_{y_2} u_0 \right]= \lim_{y_2\to R^+} \partial_{y_2} u_0(y_1,y_2) - \lim_{y2\to R^-} \partial_{y_2} u_0(y_1,y_2),
$$  
then $h$ satisfies~:
\begin{equation}\label{eq.rel.h}
\ov{h}_+ + \ov{h}_- \equiv 0.
\end{equation}
\end{lemma}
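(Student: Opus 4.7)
My plan is to decompose $Z$ into the bounded interior piece $Z^R:=\,]0,1[\times]-R,R[$ and the exterior $Z_R$, solve the two Dirichlet problems separately, then glue the pieces back together. On $Z^R$, since the domain is bounded, the norm of $\wsd{1}{\alpha}{Z^R}$ is equivalent to that of $H^1_\#(Z^R)$; hence $f\big|_{Z^R}\in H^{-1}_\#(Z^R)$ and a standard Lax--Milgram argument, using the Poincar{\'e} inequality for functions vanishing on $y_2=\pm R$, gives a unique solution $u_0^{\mathrm{int}}\in H^1_\#(Z^R)$ with zero trace there. On $Z_R$, Lemma \ref{lem.inf.sup.trunc} (for $R$ chosen large enough in terms of $\alpha$) provides a unique $u_0^{\mathrm{ext}}\in \wsd{1}{\alpha}{Z_R}$ satisfying the same boundary condition. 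Since both pieces vanish on $\{y_2=\pm R\}$, their concatenation $u_0$ has matching traces there, belongs globally to $\wsd{1}{\alpha}{Z}$, and is the unique solution of \eqref{eq.sys.sauts}.

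For the jump relation I use the distributional interpretation of the equation on the whole strip: since $u_0\in H^1_{\mathrm{loc}}(Z)$ is smooth on each subdomain, the classical jump formula yields in $\cDd'(Z)$
$$
-\Delta u_0 = f - [\partial_{y_2}u_0]_{y_2=R}\otimes\delta_{y_2=R} - [\partial_{y_2}u_0]_{y_2=-R}\otimes\delta_{y_2=-R}.
$$
To extract the average relation I pair both sides with a smooth cut-off $\chi_N(y_2)$ equal to $1$ on $|y_2|\le N$, vanishing for $|y_2|\ge 2N$, with $|\chi_N'|\le C/N$. For $N>R$ the two Dirac terms produce exactly $-\bar h_+ -\bar h_-$, while the distributional pairing on the left is identified, piece by piece via integration by parts, with $\int_Z \partial_{y_2}u_0\,\chi_N'\,d\y$.

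The main work lies in passing to the limit $N\to\infty$. For the cut-off contribution, Cauchy--Schwarz combined with the explicit bound $\|\chi_N'\|_{L_{-\alpha}^2(Z)}\le C N^{-1/2-\alpha}$ and the absolute continuity $\|\partial_{y_2}u_0\|_{L_\alpha^2(\{N\le|y_2|\le 2N\})}=o(1)$ shows that this term vanishes. For the right-hand side, I verify that $\chi_N\to 1$ in $\wsd{1}{-\alpha}{Z}$ by controlling the tail $\int_{|y_2|>N}\rho^{-2-2\alpha}\,d\y$ via dominated convergence together with the same cut-off estimate, so that $\langle f,\chi_N\rangle_{\wsd{-1}{\alpha}{Z},\wsd{1}{-\alpha}{Z}}\to\langle f,1\rangle=0$ by the hypothesis $f\bot\R$. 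The main obstacle is that both convergences require $\alpha$ to lie in the range where constants belong to $\wsd{1}{-\alpha}{Z}$ (equivalently $\rho^{-1-\alpha}\in L^2(\R)$), but this is precisely the range for which $f\bot\R$ is meaningful, so the hypothesis is consistent with the technical requirements. Passing to the limit then yields $\bar h_+ + \bar h_- = 0$.
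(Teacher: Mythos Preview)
Your argument for existence and uniqueness matches the paper's: both solve the interior problem by Lax--Milgram and the exterior problem via Lemma~\ref{lem.inf.sup.trunc}, then glue along the matching zero traces.

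For the jump relation $\ov h_++\ov h_-=0$, your route is genuinely different from the paper's. The paper regularizes the \emph{data}: it builds $\tilde f_\delta\in\cDd(Z)$ with $\tilde f_\delta\bot\R$, obtains a smooth $u_{0,\delta}$ for which the classical Green formula is immediate, tests directly against the constant $\varphi\equiv 1\in\wsd{1}{-\alpha}{Z}$, and then passes to the limit $\delta\to0$ using continuity of the map $f\mapsto[\partial_{y_2}u_0]$ in $H^{-1/2}_\#$. You instead regularize the \emph{test function}: you keep $u_0$ as is, invoke the distributional jump formula, and approximate $1$ by compactly supported cut-offs $\chi_N$. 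Your approach is more direct and avoids the intermediate PDE for $u_{0,\delta}$; the paper's approach avoids invoking the $H(\mathrm{div})$-type trace theory needed to justify the jump formula for a merely $H^1$ solution with $H^{-1}$ right-hand side.

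One technical point to watch: your cut-off $\chi_N$ supported on $\{|y_2|\le 2N\}$ with $|\chi_N'|\le C/N$ does not converge to $1$ in $\wsd{1}{-\alpha}{Z}$ at the endpoint $\alpha=-\tfrac12$, because $\|\chi_N'\|_{L^2_{1/2}}^2\sim\int_N^{2N}N^{-2}y_2\,dy_2\sim C$ does not vanish. Since Theorem~\ref{premiers_isomorphismes} (where this lemma is used) includes $\alpha=-\tfrac12$, and since the orthogonality $f\bot\R$ still makes sense there thanks to the logarithmic weight in $H^1_{1/2,\#}(Z)$, you need to cover this case. Replacing $\chi_N$ by the logarithmic truncation $\Phi_\ell$ of Proposition~\ref{prop_densite} (supported on $\{|y_2|\le e^\ell\}$ with $|\Phi_\ell'|\le C\rho^{-1}(\ln(1+\rho^2))^{-1}$) gives $\|\Phi_\ell'\|_{L^2_{1/2}}^2\lesssim 1/\ell\to0$ and repairs the argument. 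Your other limit, $\int\partial_{y_2}u_0\,\chi_N'\to0$, is fine at $\alpha=-\tfrac12$ as written, since the $L^2_\alpha$-tail of $\partial_{y_2}u_0$ already supplies the $o(1)$.
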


\begin{proof}
By Lemma \ref{lem.inf.sup.trunc} there exists a unique $u_0$ in $\wsd{1}{\alpha}{Z}$ for every $\alpha\in \RR$. 
By truncation and approximation (following the same steps as in Lemma \ref{lem_convergence} and Proposition \ref{prop_densite_critique}), we set
$$
f_{\delta} = \left(f \Phi_\delta \right)* \alpha_\delta  ,\quad \ti{f}_\delta := f_\delta-\ov{f}_\delta ,
$$   
where $\Phi_\delta$ and $\alpha_\delta$ are  chosen as in Proposition \ref{prop_densite}. An easy check shows that
$$
\ti{f}_\delta \in \cDp(Z), \quad \ti{f}_\delta \bot \RR =0. 
$$
We then compute $u_{0,\delta}$ solving \eqref{eq.sys.sauts} with the data $\ti{f}_\delta$. One has
\begin{equation}\label{eq.esti.cvg.reg}
\begin{aligned}
 &\nrm{u_{0,\delta}}{\wsd{1}{\alpha}{Z}}  = \nrm{u_{0,\delta}}{\wsd{1}{\alpha}{Z_R\cup Z^R}} \leq C\nrm{\ti{f}_{\delta}}{\wsd{-1}{\alpha}{Z_R\cup Z^R}}   
\leq C\nrm{\ti{f}_{\delta}}{\wsd{-1}{\alpha}{Z}}  \leq C \nrm{f}{\wsd{-1}{\alpha}{Z}},\\
& \nrm{u_{0,\delta}-u_0}{\wsd{1}{\alpha}{Z}}   \leq  C \nrm{\ti{f}_\delta -f}{\wsd{-1}{\alpha}{Z}} ,\\
& \nrm{\left[\partial_{y_2} u_{0,\delta} \right]-\left[\partial_{y_2} u_0 \right]}{H^{-\ud}(\{y_2=\pm R\})}  \leq C \nrm{u_{0,\delta}-u_0}{\wsd{1}{\alpha}{Z}}\leq  C \nrm{\ti{f}_\delta -f}{\wsd{-1}{\alpha}{Z}}.
\end{aligned}
\end{equation}
Moreover, for every $\varphi \in \cDp(Z)$, one might write 
$$
\begin{aligned}
\< - \Delta u_{0,\delta} , \varphi \> &= - \< u_{0,\delta} ,\Delta \varphi \> = - \int_{Z_R \cup Z^R} u_{0,\delta} \Delta \varphi d\y \\
& = - \int_{(0,1)}  \left[ \partial_{y_2} u_{0,\delta} \right]_{\{y_2=\pm R\}} \varphi(y_1,\pm R)\, dy_1 + \<\ti{f}_\delta ,\varphi\> , 
\end{aligned}
$$
the latter equality being true because $u_{0,\delta}$ is in the domain of the operator~:  the Green formula holds. 
By density, one extends the above equality to test functions in $\wsd{1}{-\alpha}{Z}$, giving :
$$
 - \< u_{0,\delta} ,\Delta \varphi \> = - \< [\partial_{y_2}  u_{0,\delta} ] , \varphi \>_{\{y_2=\pm R\}} + \<\ti{f}_\delta ,\varphi\> , \quad \forall \varphi \in \wsd{1}{-\alpha}{Z}.
$$
As $\alpha$ is chosen s.t. $\RR \subset \wsd{1}{-\alpha}{Z}$, one takes $\varphi\equiv 1$ in the above formula. This leads to :
$$
\ov{h}_{+,\delta} + \ov{h}_{-,\delta} := \< [\partial_{y_2}  u_{0,\delta} ] , 1 \>_{\{y_2= R\}} + \< [\partial_{y_2}  u_{0,\delta} ] , 1 \>_{\{y_2= -R\}}  \equiv 0,
$$
where the brackets denote the $H_{\#}^{-\ud}$,$H_{\#}^\ud$ duality product. 
Thanks to \eqref{eq.esti.cvg.reg} and by continuity,  the result holds when passing to the limit with respect to $\delta$.
\end{proof}

When extending $u_0$ on Z, it solves 
\begin{equation}\label{eq.saut}
- \Delta u_0 = f + \delta_{\{y_2= \pm R\}} h_{\pm} ,\quad \text{ in } Z.
\end{equation}
In the next paragraph, we compute a lift that cancels the lineic Dirac mass in the right hand side of \eqref{eq.saut}.
\begin{lemma}\label{lem.sauts.seuls}
  Let $u_0$ be the unique solution of problem \eqref{eq.sys.sauts}. For $\alpha \leq 0$ there exists 
a solution $w$ in $\wsd{1}{\alpha}{Z}$ of 
\begin{equation}\label{eq.sauts.seuls} 
\left\{ 
\begin{aligned} 
& - \Delta w = 0  & \text{ in } Z_R\cup Z^R, \\
& \left[ \partial_{y_2} w \right] = -\left[ \partial_{y_2} u_0 \right] =: - h_{\pm} & \text{ on } \{y_2 = \pm R\}. 
\end{aligned}  
\right. 
\end{equation}
Moreover, we have 
$$\|w\|_{H_{\alpha,\#}^1(Z)}\leq C\sum_{\pm}\left(\|h_{\pm}\|_{H_{\#}^{-\ud}(\{y_2=\pm R\})}^2\right)^{1/2}.$$
\end{lemma}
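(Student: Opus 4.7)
The plan is to construct $w$ explicitly via a Fourier series in the periodic direction. Writing $h_\pm(y_1)=\sum_{k\in\Z}\hat h_\pm(k)\,e^{i\tilde k y_1}$ and seeking $w(y_1,y_2)=\sum_k\hat w(k,y_2)\,e^{i\tilde k y_1}$, the equation and the jump conditions in \eqref{eq.sauts.seuls} decouple into the family of one-dimensional distributional ODEs
$$
(-\partial_{y_2}^2+\tilde k^2)\,\hat w(k,\cdot)=\hat h_+(k)\,\delta_R+\hat h_-(k)\,\delta_{-R}\quad\text{on }\R,\quad k\in\Z.
$$
This exactly parallels the construction of the Green function in Proposition~\ref{prop.green.func}, where the exponential kernel $e^{-|\tilde k||y|}/(2|\tilde k|)$ appears as the bounded fundamental solution in each nonzero mode.

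For $k\in\Z^{\ast}$, I set $\hat w(k,y_2):=\hat h_+(k)\,\mathfrak g_k(y_2-R)+\hat h_-(k)\,\mathfrak g_k(y_2+R)$ with $\mathfrak g_k(y):=e^{-|\tilde k||y|}/(2|\tilde k|)$. Direct calculation gives $\|\mathfrak g_k\|_{L^2(\R)}^2\leq C|\tilde k|^{-3}$ and $\|\mathfrak g_k'\|_{L^2(\R)}^2\leq C|\tilde k|^{-1}$; since $\alpha\leq 0$ implies $\rho^{\alpha}\leq 1$, the same bounds carry over to the weighted norms, yielding
$$
\|\rho^{\alpha-1}\hat w(k,\cdot)\|_{L^2(\R)}^2+\|\rho^{\alpha}\partial_{y_2}\hat w(k,\cdot)\|_{L^2(\R)}^2\leq\frac{C}{|\tilde k|}\bigl(|\hat h_+(k)|^2+|\hat h_-(k)|^2\bigr).
$$

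The delicate case is $k=0$, where the ODE reduces to $-\partial_{y_2}^2\hat w(0,\cdot)=\ov h_+\delta_R+\ov h_-\delta_{-R}$. Any primitive of the right-hand side is a step function whose values at $\pm\infty$ differ by $\ov h_++\ov h_-$, so a bounded solution exists if and only if this quantity vanishes, which is precisely the compatibility identity \eqref{eq.rel.h} established in Lemma~\ref{lem.sauts}. Under this identity I select the unique (up to an additive constant) solution that is linear on $(-R,R)$ and constant on each of $(-\infty,-R)$ and $(R,\infty)$, hence globally bounded by $C\,|\ov h_\pm|\leq C\,\|h_\pm\|_{H^{-1/2}_{\#}(\{y_2=\pm R\})}$; its derivative has support in $[-R,R]$ and is likewise bounded by the same quantity.

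Finally, assembling the modes and applying the partial Parseval identity of Lemma~\ref{lem.partial.parseval} in the $y_1$ direction, the contribution of the nonzero modes sums to $\sum_{k\neq 0}|\tilde k|^{-1}(|\hat h_+(k)|^2+|\hat h_-(k)|^2)$, which is equivalent to $\|h_+\|_{H^{-1/2}_{\#}}^2+\|h_-\|_{H^{-1/2}_{\#}}^2$. The zero-mode contribution belongs to $\wsd{1}{\alpha}{Z}$ because a bounded function lies in $L^2_{\alpha-1}(Z)$ as soon as $2(\alpha-1)<-1$, i.e.\ $\alpha<\ud$, which is ensured by the hypothesis $\alpha\leq 0$. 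The main obstacle is precisely the zero mode: without the compatibility identity $\ov h_++\ov h_-=0$ from Lemma~\ref{lem.sauts}, $\hat w(0,\cdot)$ would grow linearly at infinity and fail to sit in $\wsd{1}{\alpha}{Z}$ for any $\alpha\leq 0$; the identity rescues the construction and yields the announced bounded lift together with the estimate.
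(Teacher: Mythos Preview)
Your proof is correct and follows essentially the same approach as the paper. The paper writes $w$ as the convolution $G*(h_\pm\delta_{\{y_2=\pm R\}})$ and then splits it into the $G_1$-part (your nonzero modes, with the same exponential kernel $e^{-|\tilde k||y_2\mp R|}/(2|\tilde k|)$) and the $G_2$-part $-\tfrac{\bar h_+}{2}|y_2-R|-\tfrac{\bar h_-}{2}|y_2+R|$, which under \eqref{eq.rel.h} reduces to a function constant for $|y_2|>R$ and linear for $|y_2|<R$, exactly your zero-mode construction; the estimates are then obtained by Parseval in the same way you indicate.
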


\begin{proof}
Setting 
$$
\begin{aligned}
w := G * ( h_{\pm} \delta_{\{y_2=\pm R\}} )  & = 
G* ((h_\pm - \ov{h}_\pm)\delta_{\{y_2=\pm R\}})  + G* (\ov{h}_\pm\delta_{\{y_2=\pm R\}}) \\
& = G_1 * (h_\pm\delta_{\{y_2=\pm R\}} )+ G_2 * (\ov{h}_\pm\delta_{\{y_2=\pm R\}}),
\end{aligned}
$$
which is a tempered distribution ($G \in \cSp'(Z)$ and $h_\pm \delta_{\{y_2=\pm R\}} \in {\cal E}'_\#(Z)$), $w$ is explicit 
after some computations and reads:
$$
w = \sum_{\pm,k\in \Z^*} \frac{\cF_k ( h_\pm ) e^{i k y_1 -|k| |y_2\pm R|}}{|k|} - \left\{\frac{\ov{h}_+ }{ 2 } | y_2 - R |  +  \frac{\ov{h}_- }{ 2 } | y_2 + R | \right\}= I_1 + I_2,
$$
where $$I_1:=\sum_{\pm,k\in \Z^*} \frac{\cF_k ( h_\pm ) e^{i k y_1 -|k| |y_2\pm R|}}{|k|}\mbox{ and }I_2:= -\frac{\ov{h}_+ }{ 2 } | y_2 - R |  -  \frac{\ov{h}_- }{ 2 } | y_2 + R |.$$

One computes the $L^2_{\alpha -1} (Z)$ norm of $I_1$:
$$
\begin{aligned}
\int_{Z} I_1^2 \rho(y_2)^{2 \alpha- 2} d \y \leq \int_{Z} I_1^2 d\y & \leq 
C \sum_{\pm,k\in Z^*} \frac{| \cF_k ( h_\pm ) |^2 }{(1 + |k|^2)^\ud}  
\sum_{k\in Z^*} \int_{\RR} \frac{e^{-2 |k| |y_2\pm R|}}{|k|} d\y, \\
& \hspace{-1cm}\leq C' \sum_{\pm}\nrm{h_\pm}{H_{\#}^{-\ud}(\{y_2=\pm R\})}^2 \sum_{k \in \Z^*} \frac{1}{|k|^2}
\leq C'' \sum_{\pm}\nrm{h_\pm}{H_{\#}^{-\ud}(\{y_2=\pm R\})}^2.
\end{aligned}
$$
In the above relations, the first inequality holds since $\alpha\leq0$. Then using the Fourier transform one writes~:
$$
\cF\left(\nabla G_1 * (h_{\pm} \delta_{\{y_2=\pm R\}}) \right)  = 
\left\{ 
\begin{aligned} 
& i \frac{(k,l)^* }{(k^2 + l^2)} \cF_k\left(h_{\pm}\right), & \text{ if } k \in \Z^*, \quad   \text{ a.e. }l \in \RR , \\
& 0 & \text{ for } k=0.
\end{aligned}
\right.
$$
Next, taking the $l^2(\Z;L^2(Z))$ norm of the above expression, using again the fact that $\alpha\leq0$ and by Parseval, one gets:
$$
\begin{aligned}
\int_Z\rho(y_2)^{2\alpha}|\partial_{y_1}I_1|^2 d\y\leq\nrm{\partial_{y_1} G_1 * \left(h_{\pm} \delta_{\{y_2=\pm R\}} \right)}{L^2(Z)}^2 & =  \sum_{k\in \Z^*} \int_{R} \frac{k^2}{(k^2+l^2)^2}\; dl |\cF_k(h_\pm)|^2, \\
& \hspace{-1cm}= \sum_{k\in \Z^*} \frac{C}{|k|} |\cF_k(h_\pm)|^2 = \nrm{h_\pm}{H_{\#}^{-\ud}(\{y_2=\pm R\})}^2.
\end{aligned}
$$
A similar computation gives the  derivative with respect to $y_2$. It follows that
$$
\nrm{I_1}{\wsd{1}{\alpha}{Z}} \leq C\sum_{\pm}\left(\|h_{\pm}\|_{H_{\#}^{-\ud}(\{y_2=\pm R\})}^2\right)^{1/2}.
$$
Thanks to Lemma \ref{lem.sauts}, one can reduce $I_2$ to
$$
I_2 =  \mp  \ov{h}_{\pm} R \text{ when } |y_2| > R.
$$
Thus the claim follows since $I_1$ and $I_2$ both belong to $H_{\alpha,\#}^1(Z)$.
\end{proof}

\noindent We are now ready to state the first isomorphism result of the Laplace operator in $Z$.

\begin{thm}
\label{premiers_isomorphismes}  
Let $\alpha$ be a real number such that $-1/2\leq\alpha\leq1/2$.
Then the Laplace operator defined by 
\begin{equation}
\label{premiers_isomorphismes_1}
\Delta\,:\,H_{\alpha,\#}^1(Z)/\R\mapsto H_{\alpha,\#}^{-1}(Z)\bot\R,
\end{equation}
is an isomorphism.
\end{thm}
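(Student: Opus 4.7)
The plan is to establish, for every $\alpha\in[-1/2,1/2]$, the three standard ingredients of an isomorphism --- well-posedness of the map, injectivity modulo constants, and surjectivity with continuous inverse --- by first handling the half-range $\alpha\in[-1/2,0]$ through a direct construction and then recovering the complementary range $\alpha\in(0,1/2]$ by duality.

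First I would verify that the map is well-defined. Continuity of $\Delta:H_{\alpha,\#}^1(Z)\to H_{\alpha,\#}^{-1}(Z)$ is immediate from \eqref{derivation}. The orthogonality $\<\Delta u,1\>=0$ follows by integration by parts once one notices that $1\in H_{-\alpha,\#}^1(Z)$ for $\alpha\in[-1/2,1/2]$ (the endpoint $\alpha=-1/2$ being absorbed by the logarithmic weight in the definition of $\Hmad(Z)$). Injectivity follows from Proposition \ref{prop_noyau}: the kernel is $\pP_j$ with $j=\min\{q(1,\alpha),1\}$; a computation from \eqref{def_q} gives $q(1,\alpha)=0$ on the whole interval $[-1/2,1/2]$, so the kernel reduces to $\R$, as quotiented out.

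The main construction is for surjectivity with continuous inverse when $\alpha\in[-1/2,0]$. Given $f\in H_{\alpha,\#}^{-1}(Z)\bot\R$, Lemma \ref{lem.sauts} supplies $u_0\in H_{\alpha,\#}^1(Z)$ solving $-\Delta u_0=f$ in $Z_R\cup Z^R$ with $u_0=0$ on $\{y_2=\pm R\}$, together with the compatibility relation $\ov h_++\ov h_-=0$ on the average of the normal-derivative jumps, which is available precisely because $f\bot\R$. Lemma \ref{lem.sauts.seuls}, applicable since $\alpha\leq 0$, then yields a lift $w\in H_{\alpha,\#}^1(Z)$, harmonic on $Z_R\cup Z^R$, whose jumps satisfy $[\partial_{y_2}w]=-h_\pm$ at the two interfaces. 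Setting $u:=u_0+w\in H_{\alpha,\#}^1(Z)$, a distributional computation on the whole strip shows that the two lineic Dirac contributions coming from the jumps of $u_0$ and $w$ across $\{y_2=\pm R\}$ cancel exactly, so that $-\Delta u=f$ in $\cDd'(Z)$. Summing the continuity estimates of the two lemmas delivers
$$
\nrm{u}{H_{\alpha,\#}^1(Z)/\R}\leq C\,\nrm{f}{H_{\alpha,\#}^{-1}(Z)}.
$$

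For $\alpha\in(0,1/2]$, I would proceed by duality. The identifications $(H_{-\alpha,\#}^1(Z)/\R)'\simeq H_{\alpha,\#}^{-1}(Z)\bot\R$ and $(H_{-\alpha,\#}^{-1}(Z)\bot\R)'\simeq H_{\alpha,\#}^1(Z)/\R$, together with the formal self-adjointness $\<\Delta u,v\>=\<u,\Delta v\>$ for periodic test functions, exhibit $\Delta:H_{\alpha,\#}^1(Z)/\R\to H_{\alpha,\#}^{-1}(Z)\bot\R$ as the transpose of $\Delta:H_{-\alpha,\#}^1(Z)/\R\to H_{-\alpha,\#}^{-1}(Z)\bot\R$. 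Since $-\alpha\in[-1/2,0)$ and the latter has just been shown to be an isomorphism, so is the former. The critical endpoint $\alpha=1/2$ is covered because the logarithmic weight in the definition of $\Hmad(Z)$ makes the duality pairing meaningful there. The main obstacle, in my view, is the glueing step: showing in a genuinely distributional sense on the whole strip that $u_0+w$ solves $-\Delta u=f$ without any surviving lineic source. This cancellation hinges on the compatibility $\ov h_++\ov h_-=0$ ensured by $f\bot\R$, and on matching the normal-derivative jumps of $u_0$ and $w$ at both artificial interfaces; once this is verified, the quotient-norm estimate and the duality transfer are essentially bookkeeping.
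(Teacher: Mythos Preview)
Your proposal is correct and follows essentially the same route as the paper: well-definedness and injectivity via Proposition~\ref{prop_noyau}, surjectivity for $\alpha\le 0$ by setting $u=u_0+w$ from Lemmas~\ref{lem.sauts} and~\ref{lem.sauts.seuls}, and the range $\alpha\in(0,1/2]$ by duality. One small refinement of emphasis: the cancellation of the lineic Dirac masses at $\{y_2=\pm R\}$ is automatic from the construction of $w$ (its jumps are designed to be $-h_\pm$); the compatibility $\ov h_++\ov h_-=0$ is what guarantees that $w\in H_{\alpha,\#}^1(Z)$, since it forces the $G_2$-part of the lift to be bounded rather than affine in $y_2$ outside $|y_2|\le R$.
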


\begin{proof}
Let $f\in H^{-1}_{\alpha,\#}(Z)$. Observe that if $u\in H^1_{\alpha,\#}(Z)$ satisfies $\Delta u=f$, then due to the density of $\cDd(Z)$ in $H^1_{-\alpha,\#}(Z)$, for any 
$\varphi\in H^1_{-\alpha,\#}(Z)$, we have $$\<u,\Delta\varphi\>=\<f,\varphi\>.$$
Thus, we easily see that, under the assumptions on $\alpha$, for any $p\in\R\subset H^1_{-\alpha,\#}(Z)$,  the datum $f$ satisfies the necessarily compatibility condition 
$$\<f,p\>=0.$$
Now it is also clear that the Laplace operator defined by \eqref{premiers_isomorphismes_1} is linear and continuous. It is also injective since $\Delta u=0$ and $u\in\ H_{\alpha,\#}^1(Z)$ imply that $u$ is a constant (see Proposition \ref{prop_noyau}). It remains to prove that the operator is onto. Let $f$ be in $H_{\alpha,\#}^{-1}(Z)\bot\R$ and $R>0$ be a real number. Thanks to Lemmas \ref{lem.sauts} and \ref{lem.sauts.seuls}, one sets
$$
u := u_0 + w, 
$$
where $u_0$ solves \eqref{eq.sys.sauts} and $w$ lifts the jumps of $\partial_{y_2} u_0$ on $\{y_2=\pm R\}$ and thus solves \eqref{eq.sauts.seuls}. Results on  $u_0$ and  $w$ 
 end the proof. At this stage, the claim is proved for $\alpha \leq 0$.
By duality the results is true also for $\alpha>0$ which ends the proof.
 \end{proof}

\noindent It remains to extend the isomorphism result \eqref{premiers_isomorphismes_1} 
to any $\alpha\in\R$ and to any $m\in\Z$. For this sake, we state some regularity results for the Laplace operator in $Z$. 
\begin{thm}
\label{regularite_Hm}
Let $\alpha$ be a real number such that $-1/2<\alpha<1/2$ and let $\ell$ be an integer. 
Then the Laplace operator defined by 
\begin{equation}
\label{regularite_Hm_iso1}
\Delta\,:\,H_{\alpha+\ell,\#}^{1+\ell}(Z)/\R\mapsto H_{\alpha+\ell,\#}^{-1+\ell}(Z)\bot\R
\end{equation}
is onto.
\end{thm}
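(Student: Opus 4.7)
The plan is to induct on $|\ell|$, with the base case $\ell=0$ being Theorem \ref{premiers_isomorphismes}; the positive and negative cases proceed by different mechanisms -- a regularity lift and a transposition argument respectively.

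For $\ell \geq 1$, iterating the inclusion chain \eqref{inclusions_poids} shows that $H^{\ell-1}_{\alpha+\ell,\#}(Z) \hookrightarrow H^{-1}_{\alpha,\#}(Z)$, so any $f \in H^{\ell-1}_{\alpha+\ell,\#}(Z)\bot\R$ also lies in $H^{-1}_{\alpha,\#}(Z)\bot\R$, and Theorem \ref{premiers_isomorphismes} provides a weak solution $u \in H^1_{\alpha,\#}(Z)/\R$ of $\Delta u = f$. The essential work is upgrading $u$ to $H^{\ell+1}_{\alpha+\ell,\#}(Z)/\R$. My preferred route is to identify $u$, modulo a constant, with the Green-function convolution $G \ast f$: by Proposition \ref{prop.green.func} the difference $u - G\ast f$ is a harmonic element of $H^1_{\alpha,\#}(Z)$ and thus a constant by Proposition \ref{prop_noyau}. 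The weighted convolution estimates of Section \ref{sec.fourier} -- specifically Theorem \ref{thm.convo.reg} and its dual refinement Proposition \ref{prop.conv.dual}, together with the homogeneous $H^2$ estimate of Proposition \ref{prop.nrm.h2.unif} applied to $f$ and its derivatives up to order $\ell - 1$ -- then place $G\ast f$ in the target space $H^{\ell+1}_{\alpha+\ell,\#}(Z)$. A more bootstrap-flavoured alternative is to differentiate the equation, apply the induction hypothesis in rank $\ell - 1$ to $\partial_i u$ (which solves $\Delta(\partial_i u) = \partial_i f$ with $\partial_i f\bot\R$ automatic by periodicity for $i=1$ and by a truncation argument in $y_2$ for $i=2$), and recover $u$ itself via the weighted Poincar\'e estimate of Theorem \ref{thm_Poincare}.

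For $\ell \leq -1$, I would transpose. Self-adjointness of $\Delta$ together with the identifications $(H^{1+\ell}_{\alpha+\ell,\#}(Z)/\R)' \simeq H^{-1-\ell}_{-\alpha-\ell,\#}(Z)\bot\R$ and $(H^{-1+\ell}_{\alpha+\ell,\#}(Z)\bot\R)' \simeq H^{1-\ell}_{-\alpha-\ell,\#}(Z)/\R$ show that the adjoint of \eqref{regularite_Hm_iso1} is the same Laplace operator with parameters $(-\ell,-\alpha)$, which is covered by the previous step since $-\ell \geq 1$ and $-\alpha \in (-1/2, 1/2)$. The surjectivity of this adjoint delivers an a priori estimate, hence closed range for the original operator by Banach's closed-range theorem; combined with the injectivity of the adjoint (its kernel reducing to the constants by Proposition \ref{prop_noyau}), this yields the desired surjectivity.

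The main obstacle, in my view, will be the regularity-lift step for $\ell \geq 1$: carefully controlling the Green-function convolution in the non-isotropic weighted norm of $H^{\ell+1}_{\alpha+\ell,\#}(Z)$ -- with its different weight exponents $\rho^{-m+|\lambda|}$ at each derivative order -- and ensuring that every differentiation of $f$ preserves the orthogonality condition required by the convolution estimates of Section \ref{sec.fourier}.
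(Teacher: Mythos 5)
Your skeleton (upward induction from Theorem \ref{premiers_isomorphismes}, duality for $\ell\leq-1$) matches the paper's overall strategy, and your transposition argument for negative $\ell$ is sound. The gap is in the regularity-lift step for $\ell\geq1$: neither of your two proposed mechanisms closes it. Route (a), via $G*f$, cannot work with the estimates actually available in Section \ref{sec.fourier}: every weighted convolution bound there (Theorems \ref{thm.ortho.poly} and \ref{thm.ortho.cst}, Propositions \ref{prop.weight.h1} and \ref{prop.conv.dual}, Theorem \ref{thm.convo.reg}) loses an arbitrary $\e>0$ in the weight exponent, so they place $G*f$ in $H^{1+\ell}_{\alpha+\ell-\e,\#}(Z)$-type spaces, never in the sharp target $H^{1+\ell}_{\alpha+\ell,\#}(Z)$; moreover, for $\ell\geq2$ one has $\alpha+\ell>3/2$, and the control of the $G_2$-part then requires $f\bot\PP_1'$ (Theorem \ref{thm.ortho.poly}), whereas the hypothesis only gives $f\bot\R$. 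This is why the paper identifies $G*f$ with the variational solution only a posteriori, in Theorem \ref{thm.final}, once the isomorphisms are already established by other means. Route (b) has a weight-bookkeeping error: applying the rank-$(\ell-1)$ induction hypothesis to $\Delta(\partial_i u)=\partial_i f$ (after weakening $\partial_i f\in H^{\ell-2}_{\alpha+\ell,\#}(Z)$ to $H^{\ell-2}_{\alpha+\ell-1,\#}(Z)$) only yields $\partial_i u\in H^{\ell}_{\alpha+\ell-1,\#}(Z)$, i.e.\ $\rho^{\alpha+\ell-1}\partial^{\mu}u\in L^2(Z)$ for $|\mu|=1+\ell$, which is one full power of $\rho$ short of the required $\rho^{\alpha+\ell}\partial^{\mu}u\in L^2(Z)$. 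You cannot recover that missing power from Theorem \ref{thm_Poincare}, which only controls lower-order terms by the top seminorm.

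The device the paper uses, and which is missing from your proposal, is to run the induction on $\rho\,\partial_i u$ rather than on $\partial_i u$, via the commutator identity
\begin{equation*}
\Delta(\rho\,\partial_i u)=\rho\,\partial_i f+\partial_i u\,\Delta\rho+2\nabla\rho\cdot\nabla(\partial_i u).
\end{equation*}
The factor $\rho$ exactly absorbs the extra decay carried by $f$ (so that $\rho\,\partial_i f$ lands in the data space of the rank-$k$ hypothesis), the commutator terms are lower order and controlled by \eqref{derivation_poids}, \eqref{inclusions_poids}, \eqref{derivation} and \eqref{multiplication_poids}, and the orthogonality to $\R$ is checked by testing against constants. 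Once the induction hypothesis produces $v$ with $\Delta v=\Delta(\rho\,\partial_i u)$, the kernel characterization gives $\rho\,\partial_i u\in H^{1+k}_{\alpha+k,\#}(Z)$ up to a constant, and the isomorphism \eqref{multiplication_poids} converts this into $\partial_i u\in H^{1+k}_{\alpha+k+1,\#}(Z)$, hence $u\in H^{2+k}_{\alpha+k+1,\#}(Z)$. Without this weighted multiplier (or an equivalent substitute), the induction does not close.
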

\begin{proof}
Owing to Theorem \ref{premiers_isomorphismes}, the statement of the theorem is true for $\ell=~0$.
Assume that it is true for $\ell=k$ and let us prove that it is still true for $\ell=k+1$.
The Laplace operator defined by \eqref{regularite_Hm_iso1} is clearly linear and continuous. It is 
also injective~: if $u\in H_{\alpha+k+1,\#}^{k+2}(Z)$ and $\Delta u=0$
then $u$ is constant.
To prove that it is onto, let $f$ be given in $H_{\alpha+k+1,\#}^k(Z)\bot\R$.
According to \eqref{inclusions_poids}, $f$ belongs to $H_{\alpha+k,\#}^{-1+k}(Z)\bot\R$.
Then the induction assumption implies that there exists $u\in H_{\alpha+k,\#}^{1+k}(Z)$ such that
$\Delta u=f$. Next, we have
\begin{equation}
\label{regularite_H2_eq1}
\Delta(\rho\partial_i u)=\rho\partial_i f+\partial_i u \Delta\rho+2\nabla\rho\nabla(\partial_i u).
\end{equation} 
Using \eqref{derivation_poids}, \eqref{inclusions_poids},  \eqref{derivation} and \eqref{multiplication_poids},
all the terms of the right-hand side belong to $H_{\alpha+k,\#}^{-1+k}(Z)$. This implies that 
$\Delta(\rho\partial_i u)$ belongs to $H_{\alpha+k,\#}^{-1+k}(Z)$. 
Therefore, $\Delta(\rho\partial_i u)$ also belongs to $H_{\alpha+k-1,\#}^{-2+k}(Z)$. 
Moreover, 
$u$ belonging to $H_{\alpha+k,\#}^{1+k}(Z)$ implies that $\rho\partial_i u$ belongs $H_{\alpha+k-1,\#}^k(Z)$ and for any 
$\varphi\in H_{-\alpha-k+1,\#}^{2-k}(Z)$, we have 
$$\langle\Delta(\rho\partial_i u),\varphi\rangle_{H_{\alpha+k-1,\#}^{-2+k}(Z)\times H_{-\alpha-k+1,\#}^{2-k}(Z)}
=\langle\rho\partial_i u,\Delta\varphi\rangle_{H_{\alpha+k-1,\#}^k(Z)\times H_{-\alpha-k+1,\#}^{-k}(Z)}.$$
\noindent Since $\R\subset H_{-\alpha-k+1,\#}^{2-k}(Z)$, we can take $\varphi\in\R$
which implies that $\Delta\varphi=0$. It follows that
$\Delta(\rho\partial_i u)$ belongs to $H_{\alpha+k,\#}^{-1+k}(Z)\bot\R$.
Thanks to the induction assumption, there exists $v$ in $H_{\alpha+k,\#}^{1+k}(Z)$ such that
$$\Delta v=\Delta(\rho\partial_i u).$$
\noindent Hence, the difference $v-\rho\partial_i u$ is a constant.
Since the constants are in $H_{\alpha+k,\#}^{1+k}(Z)$, we deduce that $\rho\partial_i u$ belongs 
to $H_{\alpha+k,\#}^{1+k}(Z)$ which implies that $u$ belongs to $H_{\alpha+k+1,\#}^{2+k}(Z)$. 

\end{proof} 

\begin{rmk}
{\rm 
Let us point out that the above theorem excludes the values $\alpha\in\{-\frac{1}{2},\frac{1}{2}\}$~: 
due to logarithmic weights, the space $H_{1/2,\#}^1(Z)$ is not included in $L_{-1/2}^2(Z)$.
By duality, this also implies that the space $L_{1/2}^2(Z)$ is not included in 
$H_{-1/2,\#}^{-1}(Z)$. Therefore, in \eqref{regularite_H2_eq1} for $\alpha=1/2$, the term $\partial_i u\Delta\rho$
does not belong to $H_{1/2,\#}^{-1}(Z)$. Thus, in this section, the extension of \eqref{premiers_isomorphismes_1} to any $\alpha\in\R$ will exclude some critical values of $\alpha$. We deal with these critical values in Section \ref{sec.critic}.  
} 
\end{rmk}

\begin{rmk}
{\rm
As an application of the above theorem for the particular case when $\ell=1$, we derive a weighted version of Calder{\'o}n-Zygmund inequalities \cite{CaZy.57}.
More precisely, let $\alpha$ be a real number such that $1/2<\alpha<3/2$, then
there exists a constant $C>0$, such that for any $u\in\cDd(Z)$~:
\begin{equation}
\label{Calderon}
\nrm{\partial_i\partial_j u}{L_\alpha^2(Z)}\leq C\nrm{\Delta u}{L_\alpha^2(Z)}.
\end{equation}  
}
\end{rmk}

 Using \eqref{Calderon} and thanks to the Closed Range Theorem of Banach, we  prove that~:

\begin{thm}
\label{th_Hm}
Let $\alpha$ be a real number satisfying $1/2<\alpha<3/2$. 
For $m\in\N$, $m\geq3$, the following operator 
\begin{equation}
\label{th_Hm_eq}
\Delta\,:\,H_{\alpha,\#}^m(Z)/\PP_{m-2}'\mapsto H_{\alpha,\#}^{m-2}(Z)/\PP_{m-4}'
\end{equation}
is an isomorphim.
\end{thm}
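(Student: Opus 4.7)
The plan is to realize $\Delta$ as a continuous linear operator $T$ between the stated quotient spaces, to establish injectivity together with a closed-range \emph{a priori} estimate via the Calder\'on--Zygmund inequality \eqref{Calderon} and the weighted Poincar\'e estimate \eqref{Poincare}, and finally to conclude surjectivity through the Closed Range Theorem of Banach by analyzing the kernel of $T^*$.

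First, the elementary computation $\Delta y_2^k = k(k-1)\,y_2^{k-2}$ shows that $\Delta$ maps $\PP'_{m-2}$ onto $\PP'_{m-4}$ with kernel $\pP_1 = \mathrm{span}\{1, y_2\}$, so $T\colon H_{\alpha,\#}^m(Z)/\PP'_{m-2} \to H_{\alpha,\#}^{m-2}(Z)/\PP'_{m-4}$ is well-defined and continuous. For injectivity, if $u \in H_{\alpha,\#}^m(Z)$ satisfies $\Delta u \in \PP'_{m-4}$, I pick $q \in \PP'_{m-2}$ with $\Delta q = \Delta u$, so that $u - q$ is harmonic in $H_{\alpha,\#}^m(Z)$; Proposition~\ref{prop_noyau}, applied with $j = \min(q(m,\alpha),1) = 1$, then forces $u - q \in \pP_1 \subset \PP'_{m-2}$, hence $u \in \PP'_{m-2}$.

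Next I would apply \eqref{Calderon} to $\partial^\gamma u$ for each multi-index $|\gamma| = m-2$ (first for $u \in \cDd(Z)$, then extended to $H_{\alpha,\#}^m(Z)$ by Proposition~\ref{prop_densite}), which yields $\|\partial^\beta u\|_{L_\alpha^2(Z)} \le C\,\|\partial^\gamma \Delta u\|_{L_\alpha^2(Z)} \le C\,\|\Delta u\|_{H_{\alpha,\#}^{m-2}(Z)}$ for each $|\beta| = m$. Summing gives the semi-norm estimate $|u|_{H_{\alpha,\#}^m(Z)} \le C\,\|\Delta u\|_{H_{\alpha,\#}^{m-2}(Z)}$. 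The weighted Poincar\'e estimate of Theorem~\ref{thm_Poincare}, used with $j = q(m,\alpha) = m-2$ (valid since $\alpha \in (1/2, 3/2)$ and $m \ge 3$), then produces $\|[u]\|_{H_{\alpha,\#}^m(Z)/\PP'_{m-2}} \le C\,\|\Delta u\|_{H_{\alpha,\#}^{m-2}(Z)}$. Replacing $u$ by $u + p$ with $p \in \PP'_{m-2}$ on the right-hand side and using surjectivity of $\Delta\colon \PP'_{m-2} \to \PP'_{m-4}$ to take the infimum, the right-hand side becomes $\|[\Delta u]\|_{H_{\alpha,\#}^{m-2}(Z)/\PP'_{m-4}}$; this is the closed-range estimate.

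For surjectivity, the Closed Range Theorem reduces the task to showing $N(T^*) = \{0\}$, where $T^*\colon Y^* \to X^*$ and $Y^* \simeq H_{-\alpha,\#}^{-(m-2)}(Z) \bot \PP'_{m-4}$. The adjoint acts distributionally as $T^* g = \Delta g$, so any $g \in N(T^*)$ is a harmonic distribution in $H_{-\alpha,\#}^{-(m-2)}(Z)$ orthogonal to $\PP'_{m-4}$. A Fourier expansion in the $y_1$ direction, paralleling the argument of Proposition~\ref{prop_noyau} and discarding the exponential modes that cannot survive in a tempered distribution, reduces $g$ to an element of $\pP_1 = \mathrm{span}\{1, y_2\}$. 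The main obstacle lies in this last step: one must rigorously rule out non-zero polynomials of $\pP_1$ from the dual space $H_{-\alpha,\#}^{-(m-2)}(Z)$ under the weight restriction $\alpha \in (1/2, 3/2)$, by checking that for $m \ge 3$ neither the $L^2$ pairing nor any representation as a distributional derivative of an $L^2$-weighted function makes $1$ or $y_2$ into a continuous linear functional on $H_{\alpha,\#}^{m-2}(Z)$, so that $g = 0$ and hence $T$ is surjective.
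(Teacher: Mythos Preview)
Your approach is exactly the one the paper intends: it states only that Theorem~\ref{th_Hm} follows ``using \eqref{Calderon} and thanks to the Closed Range Theorem of Banach,'' and your proposal fleshes out precisely that skeleton --- well-definedness, injectivity via Proposition~\ref{prop_noyau}, the \emph{a priori} estimate from \eqref{Calderon} combined with Theorem~\ref{thm_Poincare}, and surjectivity via $N(T^*)=\{0\}$.

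The step you flag as the ``main obstacle'' is genuinely the only point left open, but it is easier than your phrasing suggests. You do not need to examine every possible representation of $1$ or $y_2$ as sums of derivatives: membership of a fixed distribution in $H_{-\alpha,\#}^{-(m-2)}(Z)$ is equivalent to continuity of a \emph{single} functional on $H_{\alpha,\#}^{m-2}(Z)$, and this is settled by a scaling test. Take $\varphi_R(y_1,y_2)=\chi(y_2/R)$ with $\chi\in\cD(\R)$ supported in $[1,2]$; then $\int_Z y_2^k\varphi_R \sim R^{k+1}$ while $\|\varphi_R\|_{H_{\alpha,\#}^{m-2}(Z)}\sim R^{\alpha-m+5/2}$, so the ratio behaves like $R^{k+m-3/2-\alpha}$, which diverges for every $k\ge 0$ since $\alpha<3/2$ and $m\ge 3$. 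Hence $\pP_1\cap H_{-\alpha,\#}^{-(m-2)}(Z)=\{0\}$, $N(T^*)=\{0\}$, and surjectivity follows.
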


\noindent The next result is then a straightforward consequence of the latter result.

\begin{thm}
\label{th_Hm_2}
Let $\alpha$ be a real number satisfying $1/2<\alpha<3/2$. 
For $m\in\N$, $m\geq3$, the following operator 
\begin{equation}
\label{th_Hm_2_eq}
\Delta\,:\,H_{\alpha,\#}^m(Z)/\PP_{m-2}^{'\Delta}\mapsto H_{\alpha,\#}^{m-2}(Z)
\end{equation}
is an isomorphim.
\end{thm}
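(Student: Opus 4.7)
The plan is to derive Theorem \ref{th_Hm_2} directly from Theorem \ref{th_Hm} by absorbing the residual polynomial $\PP'_{m-4}$ in the range of the latter into a correction in the domain. The key elementary observation is that $\Delta \colon \PP'_{m-2} \to \PP'_{m-4}$ is surjective with kernel $\PP^{'\Delta}_{m-2}$: since every element of $\PP'_{m-2}$ depends only on $y_2$, $\Delta$ restricts to $\partial_2^2$, and any $p(y_2) = \sum_{k=0}^{m-4} a_k y_2^k$ admits the explicit primitive $q(y_2) := \sum_{k=0}^{m-4} \frac{a_k}{(k+1)(k+2)} y_2^{k+2}$ in $\PP'_{m-2}$, while the harmonic polynomials depending on $y_2$ alone are precisely the linear ones.

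For surjectivity of the announced map, I would take $f \in H_{\alpha,\#}^{m-2}(Z)$ and invoke Theorem \ref{th_Hm} to produce $u \in H_{\alpha,\#}^m(Z)$ and $p \in \PP'_{m-4}$ such that $\Delta u = f + p$. For $\alpha \in (1/2, 3/2)$ and $m \geq 3$, formula \eqref{def_q} gives $q(m,\alpha) = m-2$, hence $\PP'_{m-2} \subset H_{\alpha,\#}^m(Z)$. Choosing $q \in \PP'_{m-2}$ with $\Delta q = p$ as above, the function $\tilde{u} := u - q$ lies in $H_{\alpha,\#}^m(Z)$ and satisfies $\Delta \tilde{u} = f$. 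For injectivity modulo $\PP^{'\Delta}_{m-2}$, any $u \in H_{\alpha,\#}^m(Z)$ with $\Delta u = 0$ belongs by Proposition \ref{prop_noyau} to $\pP_{\min(q(m,\alpha),1)} = \pP_1$, and $\pP_1 = \PP^{'\Delta}_{m-2}$ since the harmonic polynomials of $\PP'_{m-2}$ are exactly the linear ones in $y_2$. Continuous invertibility of the resulting bijection between Banach spaces then follows from the open mapping theorem.

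I do not anticipate any serious obstacle: the argument is essentially a diagram chase exploiting the short exact sequence $\PP^{'\Delta}_{m-2} \hookrightarrow \PP'_{m-2} \to \PP'_{m-4}$ induced by $\Delta$. The only facts requiring verification are that $\PP'_{m-2}$ embeds into $H_{\alpha,\#}^m(Z)$ for the prescribed range of $\alpha$ (which is read off directly from \eqref{def_q}), and that the kernel of $\Delta$ on $H_{\alpha,\#}^m(Z)$ reduces to $\pP_1$ (supplied by Proposition \ref{prop_noyau}); both are already recorded in the paper.
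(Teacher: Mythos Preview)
Your proposal is correct and is precisely the argument the paper has in mind: the text introduces Theorem~\ref{th_Hm_2} as ``a straightforward consequence'' of Theorem~\ref{th_Hm} without further detail, and your polynomial correction via the surjection $\partial_2^2\colon\PP'_{m-2}\to\PP'_{m-4}$ together with Proposition~\ref{prop_noyau} for the kernel is exactly how that consequence is unpacked. The verifications you flag---that $q(m,\alpha)=m-2$ so $\PP'_{m-2}\subset H_{\alpha,\#}^m(Z)$, and that $\PP^{'\Delta}_{m-2}=\pP_1$ for $m\geq3$---are accurate.
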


\noindent The next theorem extends Theorem \ref{premiers_isomorphismes}.

\begin{thm}
\label{iso_generales}
Let $\alpha$ be a real number satisfying $1/2<\alpha<3/2$ and
let $\ell\geq1$ be a given integer. Then the Laplace operators defined by
\begin{equation}
\label{iso_generale_1}
\Delta\,:\,H_{-\alpha+\ell,\#}^1(Z)/\pP_{1-\ell}\mapsto H_{-\alpha+\ell,\#}^{-1}(Z)\bot\PP_{-1+\ell}^{'\Delta}
\end{equation} 
and
\begin{equation}
\label{iso_generale_2}
\Delta\,:\,H_{\alpha-\ell,\#}^1(Z)/\PP_{-1+\ell}^{'\Delta}\mapsto H_{\alpha-\ell,\#}^{-1}(Z)\bot\pP_{1-\ell}
\end{equation} 
\noindent are isomorphism.
\end{thm}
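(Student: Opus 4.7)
The two operators in \eqref{iso_generale_1} and \eqref{iso_generale_2} are adjoint to one another. Since $\Delta$ is formally self-adjoint on $\cDd(Z)$, which is dense in each of the spaces involved (Proposition \ref{prop_densite}), and since for any finite-dimensional polynomial subspace we have the standard identifications
$$
(H^1_{\beta,\#}(Z)/W)' = H^{-1}_{-\beta,\#}(Z)\bot W, \qquad (H^{-1}_{\beta,\#}(Z)\bot V)' = H^1_{-\beta,\#}(Z)/V,
$$
the adjoint of the operator in \eqref{iso_generale_1} is precisely \eqref{iso_generale_2}. Banach's closed range theorem then reduces the proof to establishing one of them; we concentrate on \eqref{iso_generale_1}.

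We argue by induction on $\ell\geq 1$. For $\ell=1$, the weight $-\alpha+1$ lies in $(-1/2,1/2)$ and $\pP_0=\PP_0^{'\Delta}=\R$, so \eqref{iso_generale_1} is exactly Theorem \ref{premiers_isomorphismes} applied at this weight. For the inductive step from $\ell$ to $\ell+1$, given $f\in H^{-1}_{-\alpha+\ell+1,\#}(Z)\bot\PP_\ell^{'\Delta}$, we first note that $f$ embeds into $H^{-1}_{-\alpha+\ell,\#}(Z)$ (higher weights correspond to smaller $H^{-1}$-spaces) and satisfies $f\bot\PP_{\ell-1}^{'\Delta}$ since $\PP_{\ell-1}^{'\Delta}\subset\PP_\ell^{'\Delta}$. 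The inductive hypothesis therefore supplies a solution $u_\ell\in H^1_{-\alpha+\ell,\#}(Z)/\pP_{1-\ell}$ to $\Delta u_\ell=f$. The task is then to upgrade the decay of $u_\ell$ by one weight unit. For this we analyze the shifted equation
$$
\Delta(\rho u_\ell)=\rho f+2\,\nabla\rho\cdot\nabla u_\ell+u_\ell\,\Delta\rho,
$$
whose right-hand side, thanks to \eqref{derivation_poids} and \eqref{multiplication_poids}, lies again in a data space at weight $-\alpha+\ell$, with the compatible orthogonality inherited from $f$ plus the cancellation furnished by the one additional harmonic polynomial in $\PP_\ell^{'\Delta}\setminus\PP_{\ell-1}^{'\Delta}$. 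Reapplying the inductive hypothesis and inverting the weight multiplication via \eqref{multiplication_poids} yields $u_\ell$, adjusted by an element of $\pP_{1-\ell}$, inside $H^1_{-\alpha+\ell+1,\#}(Z)$, which is the desired $u$.

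The main obstacle is the careful bookkeeping of polynomial kernels and orthogonal cokernels across the weight shift: the new orthogonality generated by passing from $\PP_{\ell-1}^{'\Delta}$ to $\PP_\ell^{'\Delta}$ --- nontrivial only for $\ell=1$, since for $\ell\geq2$ both spaces equal $\{a+by_2\}$ due to the 1D harmonicity in $y_2$ --- must match exactly the polynomial obstruction generated by the multiplicative shift $u_\ell\mapsto\rho u_\ell$, and for $\ell\geq 2$ the upgrade reduces to a purely a priori weighted estimate in the spirit of Theorem \ref{thm_Poincare}. The hypothesis $\alpha\in(1/2,3/2)$ keeps us safely away from the critical weights $\alpha\in 1/2+\Z$ where the logarithmic corrections of \eqref{inclusions_poids_log} would destroy the clean polynomial bookkeeping; those cases are treated separately in Section \ref{sec.critic}.
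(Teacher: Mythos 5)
There is a genuine gap in your inductive step. You propose to upgrade the decay of $u_\ell\in H_{-\alpha+\ell,\#}^{1}(Z)$ by analyzing $\Delta(\rho u_\ell)=\rho f+2\nabla\rho\cdot\nabla u_\ell+u_\ell\,\Delta\rho$ and claiming the right-hand side lies in the data space at weight $-\alpha+\ell$. It does not. Since $|\nabla\rho|\leq C$ and $|\Delta\rho|\leq C\rho^{-1}$, the commutator terms only satisfy $2\nabla\rho\cdot\nabla u_\ell+u_\ell\Delta\rho\in L_{-\alpha+\ell}^2(Z)$; but the embedding $L_{\gamma}^2(Z)\hookrightarrow H_{\beta,\#}^{-1}(Z)$ requires $\gamma\geq\beta+1$ (because, by \eqref{inclusions_poids}, a test function $v\in H_{-\beta,\#}^{1}(Z)$ is only controlled in $L_{-\beta-1}^2(Z)$). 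So these terms live in $H_{-\alpha+\ell-1,\#}^{-1}(Z)$, one full weight short of the space $H_{-\alpha+\ell,\#}^{-1}(Z)$ to which you want to apply the inductive hypothesis. No bookkeeping of the polynomial spaces $\PP_{\ell}^{'\Delta}$ versus $\PP_{\ell-1}^{'\Delta}$ can repair this: the obstruction is a loss of decay in the commutator, not an orthogonality mismatch, and your remark that for $\ell\geq2$ the step "reduces to an a priori estimate in the spirit of Theorem \ref{thm_Poincare}" is precisely where the missing argument would have to go.

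The paper's route is built to avoid exactly this. The weight shift is always performed together with a derivative, via $\rho\,\partial_i u$ rather than $\rho u$: in $\Delta(\rho\partial_i u)=\rho\partial_i f+\partial_i u\,\Delta\rho+2\nabla\rho\cdot\nabla(\partial_i u)$ the commutators involve one more derivative of $u$, which lets them land in $H_{\alpha+k,\#}^{-1+k}(Z)$ as needed (this is the proof of Theorem \ref{regularite_Hm}). The price is that one must work at regularity index $m\geq2$. Hence the paper first establishes $\Delta:H_{\alpha,\#}^{m}(Z)/\PP_{m-2}^{'\Delta}\mapsto H_{\alpha,\#}^{m-2}(Z)\bot\pP_{-m+2}$ for $\alpha\in(1/2,3/2)$ and $m\geq2$ (Theorems \ref{regularite_Hm}, \ref{th_Hm}, \ref{th_Hm_2}, the latter via the Calder{\'o}n--Zygmund inequality \eqref{Calderon} and the closed range theorem), transposes to get $\Delta:H_{-\alpha,\#}^{-m+2}(Z)/\pP_{-m+2}\mapsto H_{-\alpha,\#}^{-m}(Z)\bot\PP_{m-2}^{'\Delta}$, and only then climbs back up along the diagonal of \eqref{inclusions_poids} with the $\rho\,\partial_i$ shift to reach $H^{1}\to H^{-1}$ at $m=\ell+1$. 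Your duality reduction of \eqref{iso_generale_2} to \eqref{iso_generale_1} and your base case $\ell=1$ agree with the paper, but the core of the argument --- the detour through high regularity and back --- is missing, and the direct $H^1$-level induction you substitute for it does not close.
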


\begin{proof}
Observe first that when $\ell=1$, the result is proved in Theorem \ref{premiers_isomorphismes}. 
Observe next that if $m\geq2$ is an integer, the mapping
$$\Delta\,:\,H_{\alpha,\#}^m(Z)/\PP_{m-2}^{'\Delta}\mapsto H_{\alpha,\#}^{m-2}(Z)\bot\pP_{-m+2}$$
is onto. Indeed, if $m=2$, this isomorphism is exactly defined by \eqref{regularite_Hm_iso1} with $\ell=1$.
If $m\geq3$, it is defined by \eqref{th_Hm_2_eq}. Now through duality and transposition, the mapping
$$\Delta\,:\,H_{-\alpha,\#}^{-m+2}(Z)/\pP_{-m+2}\mapsto H_{-\alpha,\#}^{-m}(Z)\bot\PP_{m-2}^{'\Delta}$$ 
is onto. Next, using the same arguments as in the proof of Theorem \ref{regularite_Hm}, we are able to  show that  
for any integer $\ell\geq1$, the operator 
$$\Delta\,:\,H_{-\alpha+\ell,\#}^{-m+2+\ell}(Z)/\pP_{-m+2}\mapsto H_{-\alpha+\ell,\#}^{-m+\ell}(Z)\bot\PP_{m-2}^{'\Delta}$$
\noindent is an isomorphism. Choosing $m=\ell+1$, the operator defined by \eqref{iso_generale_1} is an isomorphism. 
By duality and transposition, the  mapping defined by \eqref{iso_generale_2} is onto as well.\\
\end{proof}

\begin{rmk}
\label{rem_recap}
{\rm
Summarizing theorems \ref{premiers_isomorphismes} and \ref{iso_generales},  we deduce that, for any
$\alpha\in\R$ such that $\alpha\neq\pm(\frac{1}{2}+k)$, $k\in\N^*$, the mapping
\begin{equation}
\label{iso_generale_sans_critique_1}
\Delta\,:\,H_{\alpha,\#}^1(Z)/\PP_{[1/2-\alpha]}^{'\Delta}\mapsto H_{\alpha,\#}^{-1}(Z)\bot\PP_{[1/2+\alpha]}^{'\Delta}
\end{equation} 
is an isomorphism . As a consequence, 
for any $m\in\Z$, for any $\alpha\in\R$ such that $\alpha\neq\pm(\frac{1}{2}+k)$, $k\in\N^*$, the mapping 
\begin{equation}
\label{iso_generale_sans_critique_2}
\Delta\,:\,H_{\alpha,\#}^{m+2}(Z)/\PP_{[m+3/2-\alpha]}^{'\Delta}\mapsto H_{\alpha,\#}^{m}(Z)\bot\PP_{[-m-1/2+\alpha]}^{'\Delta}
\end{equation} 
is an isomorphism.}
\end{rmk}

\section{Isomorphism results for the critical cases}\label{sec.critic}

\noindent Note that the isomorphism result \eqref{iso_generale_sans_critique_2}
is not valid for $\alpha=\pm(\frac{1}{2}+k)$, $k\in\N^*$. In order to extend it to these cases, 
we redefine the weighted spaces. Therefore, proceeding as in \cite{These_Giroire}, for $m\in\Z$, $p\in\N$,
and $\alpha\in\{-\frac{1}{2},\frac{1}{2}\}$, we introduce the space 
\begin{align}
X_{\alpha+p,\#}^{m+p}(Z):=\{&u\in H_{\alpha,\#}^m(Z),\,\,\,\forall\lambda\in\N,\,\,\,1\leq\lambda\leq p,\nonumber\\
&y_2^\lambda u\in H_{\alpha,\#}^{m+\lambda}(Z),\,\,\,u\in H_{\loc,\#}^{m+p}(Z)\},\nonumber
\end{align}
where $H_{\loc,\#}^{m+p}(Z)$ stands for the space of functions that belong to $H_{\#}^{m+p}(Z)$ with compact support 
in the $y_2$ direction. The space $X_{\alpha+p,\#}^{m+p}(Z)$ is a Banach space when endowed with the norm
$$\|u\|_{X_{\alpha+p,\#}^{m+p}(Z)}=\left(\sum_{0\leq\lambda\leq p}\|y_2^\lambda u\|_{H_{\alpha,\#}^{m+\lambda}(Z)}^p
+\|u\|_{H_{\#}^{m+p}(Z^1)}^p\right)^{1/p},$$
where $Z^1:=]0,1[\times]-1,1[$.

\begin{rmk}
{\rm
We  restricted the definition of the space $X_{\alpha+p,\#}^{m+p}(Z)$ to $\alpha\in\{-\frac{1}{2},\frac{1}{2}\}$,
but this definition holds for all $\alpha$ in $\R$. For $\alpha\in\R\setminus\frac{1}{2}\Z$,
since no logarithmic weights appear in the definition of $H_{\alpha+p,\#}^{m+p}(Z)$, 
the spaces $X_{\alpha+p,\#}^{m+p}(Z)$ and $H_{\alpha+p,\#}^{m+p}(Z)$ coincide
algebraically and topologically \cite{These_Giroire}.
}
\end{rmk}

\noindent The next proposition is dedicated to density of $\cD_\#(Z)$ in $X_{\alpha+p,\#}^{m+p}(Z)$. 
In order to prove it, we establish two lemmas giving additional properties on the space $H_{\alpha,\#}^m(Z)$.
The first lemma characterizes the dual space $H_{-\alpha,\#}^{-m}(Z)$.     

\begin{lemma}
\label{lem_caracterisation_duaux}
Let $\alpha\in\R$ and $f\in H_{-\alpha,\#}^{-m}(Z)$. Then for any $v\in H_{\alpha,\#}^m(Z)$, we have 
$$\<f,v\>_{H_{-\alpha,\#}^{-m}(Z)\times H_{\alpha,\#}^m(Z)}=\sum_{0\leq|\mu|\leq m}\int_Z g_\mu\,\partial^\mu v\,d\y,$$
where $g_m\in L^2_{-\alpha}(Z)$ and for any $0\leq|\mu|<m$,
\begin{itemize}
\item $g_\mu\in L_{-\alpha+m-|\mu|}^2(Z)$ if $\alpha\notin\{\frac{1}{2},...,m-\frac{1}{2}\}$,
\item $(\ln(1+\rho^2))^{1/2}g_\mu\in L_{-\alpha+m-|\mu|}^2(Z)$ if $\alpha\in\{\frac{1}{2},...,m-\frac{1}{2}\}$.
\end{itemize}
\end{lemma}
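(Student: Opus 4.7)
The plan is to identify $\Hmad(Z)$ with an isometric copy of a closed subspace of a Hilbert product of unweighted $L^2$-spaces, and then invoke Hahn--Banach together with the Riesz representation theorem, which is the standard route for characterising distributions in dual Sobolev spaces.

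First, I would rewrite the norm defining $\Hmad(Z)$ as a single weighted-$L^2$ norm on the vector of derivatives, namely
$$
\|u\|_{\Hmad(Z)}^{2} = \sum_{|\mu|\leq m}\|w_\mu\partial^{\mu}u\|_{L^{2}(Z)}^{2},
$$
where $w_\mu:=\rho^{\alpha-m+|\mu|}$ in the non-critical case, with the extra factor $(\ln(1+\rho^{2}))^{-1}$ inserted for the indices $|\mu|\leq k$ in the critical case. The linear map $T:u\mapsto(w_\mu\partial^{\mu}u)_{|\mu|\leq m}$ then realises $\Hmad(Z)$ isometrically as a closed subspace $E$ of the Hilbert product $X:=\bigoplus_{|\mu|\leq m}L^{2}(Z)$.

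Next, Proposition \ref{prop_densite} provides the density of $\cDd(Z)$ in $\Hmad(Z)$, which allows me to identify $H^{-m}_{-\alpha,\#}(Z)$ with $(\Hmad(Z))'$. Pulling back any such $f$ to $E$ via $T^{-1}$ gives a continuous linear form on $E$, which by Hahn--Banach extends to a continuous linear form $\widetilde f$ on the whole of $X$ of the same norm. The Riesz representation theorem in the Hilbert product $X$ then furnishes a family $(h_\mu)_{|\mu|\leq m}$ of $L^{2}(Z)$ functions such that
$$
\widetilde f\bigl((\varphi_\mu)_\mu\bigr) = \sum_{|\mu|\leq m}\int_{Z}h_\mu\varphi_\mu\,d\y.
$$
Choosing $\varphi_\mu=w_\mu\partial^{\mu}v$ with $v\in\Hmad(Z)$ and setting $g_\mu:=w_\mu h_\mu$ produces the claimed representation $\langle f,v\rangle = \sum_\mu\int_Z g_\mu\partial^\mu v\,d\y$.

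The integrability assertions on the $g_\mu$ then follow simply by unpacking $w_\mu$: for $|\mu|=m$ one has $g_m=\rho^{\alpha}h_m\in L^{2}_{-\alpha}(Z)$; in the non-critical case and for $|\mu|<m$, $\rho^{-\alpha+m-|\mu|}g_\mu=h_\mu\in L^{2}(Z)$ directly gives $g_\mu\in L^{2}_{-\alpha+m-|\mu|}(Z)$. The delicate point is the critical case $\alpha\in\{1/2,\ldots,m-1/2\}$, since by definition the logarithmic factor only appears in $w_\mu$ for the indices $|\mu|\leq k$. For those indices it transfers to $g_\mu$ and the stated $(\ln)^{1/2}$-weighted integrability follows from $(\ln(1+\rho^{2}))^{-1/2}\leq C$ on $Z$ (because $\rho\geq 1$). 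Extending the uniform log-weighted statement to all indices $k<|\mu|<m$ is the main technical obstacle; I expect to handle it either by a refined reformulation of the norm with logarithmic weights at every derivative level, or by dualising the embedding \eqref{inclusions_poids_log} of $\Hmad(Z)$ into $H^{m-\alpha+1/2}_{1/2,\#}(Z)$, in which the logarithmic weight is uniformly present.
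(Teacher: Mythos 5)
Your argument is the paper's argument: the paper also realises $H_{\alpha,\#}^m(Z)$ isometrically inside a product of $L^2$-spaces via $v\mapsto(\partial^\mu v)_{|\mu|\le m}$ (it keeps the weights on the component spaces where you absorb them into the functions — a purely cosmetic difference), extends the functional by Hahn--Banach and represents the extension by Riesz. Your treatment of the non-critical case is complete and matches the paper's, which in fact only writes out that case and dismisses the critical one with ``the proof is similar.'' The obstacle you flag in the critical case is genuine and you have located it precisely: the isometry hands you $(\ln(1+\rho^2))\,g_\mu\in L^2_{-\alpha+m-|\mu|}(Z)$ for $|\mu|\le k$ (a full power of the logarithm, which trivially implies the stated half power since $\ln(1+\rho^2)\ge\ln 2$), but only $g_\mu\in L^2_{-\alpha+m-|\mu|}(Z)$ with no logarithmic gain for $k<|\mu|<m$, so the uniform $(\ln)^{1/2}$ assertion of the lemma does not follow from this route. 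Be aware that neither of your proposed repairs is straightforward: inserting $(\ln)^{-1/2}$ into the norm at the levels $k<|\mu|<m$ gives a continuous but not bicontinuous reformulation (so the isometry, and hence Riesz, is lost), and dualising \eqref{inclusions_poids_log} does not preserve the derivative count. The practical observation is that what the direct argument does deliver — the logarithmic improvement exactly at the levels where the norm of $H_{\alpha,\#}^m(Z)$ carries the logarithm, and plain $L^2_{-\alpha+m-|\mu|}$-membership elsewhere — is all that is used downstream in Lemma \ref{lem_convergence}; so the discrepancy is best viewed as an overstatement in the lemma for the intermediate indices rather than a defect of your method.
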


\begin{proof}
We shall only prove the statement for $\alpha\notin\{\frac{1}{2},...,m-\frac{1}{2}\}$.
The proof corresponding to  critical values of $\alpha$ is similar.
We set $\textit{\textbf{E}}=\underset{0\leq|\mu|\leq m}\prod L_{-\alpha+m-|\mu|}^2(Z)$ 
such that for any $\boldsymbol{\psi}=(\psi_\mu)_{0\leq|\mu|\leq m}\in \textit{\textbf{E}}$, 
$$\|\boldsymbol{\psi}\|_{\textit{\textbf{E}}}=\sum_{0\leq|\mu|\leq m}\|\psi_\mu\|_{L_{-\alpha+m-|\mu|}^2(Z)}.$$
The operator $T$ defined by $T:v\in H_{\alpha,\#}^m(Z)\mapsto(\partial^\mu v)_{0\leq|\mu|\leq m}\in\textit{\textbf{E}}$ is isometric.
We now set $\textit{\textbf{G}}:=T(H_{\alpha,\#}^m(Z))$ and $S:=T^{-1}:\textit{\textbf{G}}\mapsto H_{\alpha,\#}^m(Z)$. 
The mapping $L:\textit{\textbf{h}}\in \textit{\textbf{G}}\mapsto\<f,S\textit{\textbf{h}}\>_{H_{-\alpha,\#}^{-m}(Z)\times H_{\alpha,\#}^m(Z)}$ is linear and continuous. 
Therefore thanks to the Hahn-Banach theorem, 
there exists  $\tilde{L}$, a linear and continuous extension of $L$ on \textit{\textbf{E}} such that 
$\|\tilde{L}\|_{\textit{\textbf{E}}'}=\|f\|_{H_{-\alpha,\#}^{-m}(Z)}$. Thanks to the Riesz representation theorem, there exists 
$g_\mu\in L_{-\alpha+m-|\mu|}^2(Z)$ for any $0\leq|\mu|\leq m$, such that 
$$\forall v\in H_{\alpha,\#}^m(Z),\hspace{2mm}\<\tilde{L},v\>
=\sum_{0\leq|\mu|\leq m}\int_Z g_\mu\,\partial^\mu v\,d\y.$$    
\end{proof}

\noindent The second lemma concerns the density of functions with compact support.

\begin{lemma}
\label{lem_convergence}
Let $m\in\Z$, $\lambda\in\N$, $\alpha\in\R$, $\Phi_\ell$ be defined by \eqref{def_phi} and $u\in H_{\alpha,\#}^{m+\lambda}(Z)$.
We set $u_\ell(\y)=\Phi_\ell(y_2)u(\y)$ \textit{a.e.} $\y\in Z$. Then we have 
\begin{equation}
\label{lem_convergence_eq}
\lim_{\ell\to\infty}\|u-u_\ell\|_{H_{\alpha,\#}^{m+\lambda}(Z)}=0.
\end{equation}
\end{lemma}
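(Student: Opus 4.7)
The plan is to split according to the sign of $N := m+\lambda$. For $N\geq 0$ the argument is pointwise via Leibniz' rule, while for $N<0$ one dualizes by means of Lemma~\ref{lem_caracterisation_duaux} and is reduced to the former case.

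Assume first $N\geq 0$. For every multi-index $\mu$ with $|\mu|\leq N$, I would write
$\partial^\mu(u-u_\ell) = (1-\Phi_\ell)\partial^\mu u - \sum_{0\leq\nu<\mu}\binom{\mu}{\nu}(\partial^{\mu-\nu}\Phi_\ell)(\partial^\nu u)$
and control the two kinds of contribution separately. The principal term $(1-\Phi_\ell)\partial^\mu u$ is dominated pointwise by $|\partial^\mu u|$ and tends to $0$ pointwise, since for fixed $y_2$ one has eventually $\Phi_\ell(y_2)=1$; dominated convergence in each of the weighted $L^2$ factors defining $\|\cdot\|_{H_{\alpha,\#}^{N}(Z)}$ then forces this contribution to vanish as $\ell\to\infty$.

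For the cross terms I would exploit two features of $\partial^{\mu-\nu}\Phi_\ell$ when $|\mu-\nu|\geq 1$: first, its support lies in $\{e^{\ell/2}\leq|y_2|\leq e^\ell\}$; second, the estimate recalled in the proof of Proposition~\ref{prop_densite},
$|\partial^{\mu-\nu}\Phi_\ell(y_2)|\leq C\,\rho(y_2)^{-|\mu-\nu|}(\ln(2+y_2^2))^{-1}$,
holds uniformly in $\ell$. Inserted in the weight $\rho^{-N+|\mu|}$ attached to $\partial^\mu u$ in the norm (or its logarithmic variant in the critical range), this transforms the integrand into $\rho^{-2N+2|\nu|+2\alpha}(\ln(1+\rho^2))^{-2}|\partial^\nu u|^2$, restricted to the annular support of $\partial^{\mu-\nu}\Phi_\ell$. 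Two subcases then arise: either $|\nu|$ lies in the subcritical range where the norm of $u$ already carries a $(\ln(1+\rho^2))^{-1}$ factor, so the integral is a tail of an absolutely convergent integral and vanishes by dominated convergence; or, on the support of $\partial^{\mu-\nu}\Phi_\ell$ one has $\ln(1+\rho^2)\gtrsim\ell$, which produces a $C\ell^{-2}\|u\|_{H_{\alpha,\#}^N(Z)}^2$ factor that also tends to $0$. The extra logarithm in the derivative bound on $\Phi_\ell$ is precisely what is needed to match the logarithmic weights in the critical definition $\alpha\in\{1/2,\ldots,N-1/2\}$, and this matching is the delicate book\-keeping point.

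For $N<0$, I would invoke Lemma~\ref{lem_caracterisation_duaux} to represent $u=\sum_{|\mu|\leq|N|}\partial^\mu g_\mu$ with $g_\mu$ in the appropriate (possibly logarithmically corrected) weighted $L^2$ space. Then, for any $\varphi$ in the unit ball of $H_{-\alpha,\#}^{|N|}(Z)$,
$\langle u-u_\ell,\varphi\rangle=\sum_\mu(-1)^{|\mu|}\int_Z g_\mu\,\partial^\mu((1-\Phi_\ell)\varphi)\,d\y$,
and expanding $\partial^\mu((1-\Phi_\ell)\varphi)$ by Leibniz reduces the problem to exactly the estimates carried out in the $N\geq 0$ case, now with $g_\mu$ weighted against $\partial^\nu\varphi$ by Cauchy--Schwarz in the adjoint weighted $L^2$ pair. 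Taking the supremum over $\|\varphi\|\leq 1$ gives the claim. The main obstacle throughout is the accounting across the critical thresholds $\alpha\in\{1/2,\ldots,|N|-1/2\}$: one must track which of $u$'s derivatives fall in the logarithmically weighted range and ensure that the extra $(\ln(1+\rho^2))^{-1}$ gained from $\partial^{\mu-\nu}\Phi_\ell$ is used against the correct factor, both in the direct estimate and, via duality, on the test function side.
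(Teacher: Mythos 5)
Your proposal is correct and follows essentially the same route as the paper: the case $m+\lambda\geq 0$ is handled by Leibniz expansion together with the uniform bound $|\partial^k\Phi_\ell|\leq C\rho^{-|k|}(\ln(2+y_2^2))^{-1}$ (the paper simply declares this case clear), and the case $m+\lambda<0$ is reduced to it by duality through the representation of Lemma \ref{lem_caracterisation_duaux}, exactly as in the paper. Your additional bookkeeping of the logarithmic weights across the critical thresholds is consistent with, and somewhat more explicit than, the argument given in the text.
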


\begin{proof}
It is clear that for the case $m+\lambda\geq0$, \eqref{lem_convergence_eq} is satisfied.
We shall now focus on the case $m+\lambda<0$. We have 
$$\|u-u_\ell\|_{H_{\alpha,\#}^{m+\lambda}(Z)}=\sup_{\begin{subarray}{l}v\in H_{-\alpha,\#}^{-m-\lambda}(Z)\\ v\neq0\end{subarray}}\frac{|\<u-u_\ell,v\>|}{\|v\|},$$
where $\<\cdot,\cdot\>$ is the duality pairing between $H_{\alpha,\#}^{m+\lambda}(Z)$ and $H_{-\alpha,\#}^{-m-\lambda}(Z)$. 
Besides, thanks to Lemma \ref{lem_caracterisation_duaux} there exist $g_\mu$ such that
$g_{-m-\lambda}\in L_\alpha^2(Z)$ and for any $0\leq|\mu|<-m-\lambda$,
\begin{itemize} 
\item $g_\mu\in L_{\alpha-m-\lambda-|\mu|}^2(Z)$ if $-\alpha\notin\{1/2,...,-m-\lambda-1/2\}$
\item $(\ln(1+\rho^2))^{1/2}g_\mu\in L_{\alpha-m-\lambda-|\mu|}^2(Z)$ if $-\alpha\in\{1/2,...,-m-\lambda-1/2\}$
\end{itemize}
satisfying for any $v\in H_{-\alpha,\#}^{-m-\lambda}(Z)$, $$\<u,v\>
=\sum_{0\leq|\mu|\leq-m-\lambda}\int_Z g_\mu\,\partial^\mu v\,d\y.$$
Therefore, we can write 
$$\<u-u_\ell,v\>=\sum_{0\leq|\mu|\leq-m-\lambda}\int_Z g_\mu(\partial^\mu v-\partial^\mu(\Phi_\ell\, v))d\y.$$
It follows that 
\begin{align}
|\<u-u_\ell,v\>|\leq &\sum_{0\leq|\mu|\leq-m-\lambda}\int_Z|g_\mu(1-\Phi_\ell)\partial^\mu v|d\y\nonumber+C\sum_{0\leq|\mu|\leq-m-\lambda}\,\sum_{0<|k|\leq|\mu|}\int_Z g_\mu\partial^k\Phi_\ell\,\partial^{\mu-k}v d\y.\nonumber
\end{align}
We deduce that
\begin{align}
|\<u-u_\ell,v\>|\leq&\left(\sum_{0\leq|\mu|\leq-m-\lambda}\|g_\mu(1-\Phi_\ell)\|_{L_{\alpha-m-\lambda-|\mu|}^2(Z)}\right.\nonumber\\
&\left.+C\sum_{0\leq|\mu|\leq-m-\lambda}\,\sum_{0<|k|\leq|\mu|}\|g_\mu\,\partial^k\Phi_\ell\|_{L_{\alpha-m-\lambda-|\mu|}^2(Z)}\right)\|v\|_{H_{-\alpha,\#}^{-m-\lambda}(Z)}.\nonumber
\end{align}
The first two terms in the right hand side tend to zero as $\ell$ tends to infinity.
This proves \eqref{lem_convergence_eq} for $m+\lambda<0$.
\end{proof}

\noindent We proceed with three results showing  some properties of the spaces $X_{\alpha,\#}^{m,p}(Z)$.

\begin{proposition}
\label{prop_densite_critique}
Let $m\in\Z$, $p\in\N$ and $\alpha\in\{-\frac{1}{2},\frac{1}{2}\}$, then 
the space $\cD_\#(Z)$ is dense in $X_{\alpha+p,\#}^{m+p}(Z)$.
\end{proposition}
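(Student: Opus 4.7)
The plan is to mimic the two-step density argument of Proposition \ref{prop_densite}: first truncate $u$ vertically so as to obtain compact support in $y_2$, then perform multiplication by a $y_1$-partition of unity, mollification and periodization. The key novelty compared to the non-critical case is that convergence must be propagated through all the seminorms $\|y_2^\lambda\,\cdot\,\|_{H^{m+\lambda}_{\alpha,\#}(Z)}$, $0\leq\lambda\leq p$, that enter the definition of $X_{\alpha+p,\#}^{m+p}(Z)$.

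For the truncation step, let $u\in X_{\alpha+p,\#}^{m+p}(Z)$ and set $u_\ell(\y):=\Phi_\ell(y_2)u(\y)$ with $\Phi_\ell$ as in \eqref{def_phi}. The elementary identity $y_2^\lambda u_\ell=\Phi_\ell(y_2^\lambda u)$ combined with the assumption $y_2^\lambda u\in H^{m+\lambda}_{\alpha,\#}(Z)$ built into the definition of $X_{\alpha+p,\#}^{m+p}(Z)$ allows one to apply Lemma \ref{lem_convergence} componentwise, yielding
$$
\lim_{\ell\to\infty}\|y_2^\lambda(u-u_\ell)\|_{H^{m+\lambda}_{\alpha,\#}(Z)}=0,\quad 0\leq\lambda\leq p.
$$
Since $\Phi_\ell\equiv1$ on $Z^1$ for $\ell$ large enough, the local $H^{m+p}(Z^1)$ contribution is unaffected. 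Hence $u_\ell\to u$ in $X_{\alpha+p,\#}^{m+p}(Z)$ and one may assume henceforth that $u$ has compact support in the $y_2$-direction.

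The compact $y_2$-support combined with $u\in H_{\loc,\#}^{m+p}(Z)$ then gives $u\in H_\#^{m+p}(Z)$. I would reproduce step (ii) of the proof of Proposition \ref{prop_densite} verbatim: set $w:=\theta u\in H^{m+p}(\R^2)$ with $\theta$ a periodical $\cD(\R)$-partition of unity, regularise $\varphi_j:=w*\alpha_j\in\cD(\R^2)$ via a standard mollifier $(\alpha_j)_j$, and periodise $\psi_j:=\overline{\omega}\varphi_j\in\cDd(Z)$. By Lemma \ref{lem_preliminaire}, $\psi_j\to u$ in $H_\#^{m+p}(Z)$, and for $j$ large enough the supports of $\psi_j$ and of $u$ are all contained in a common fixed compact strip $K\subset Z$ in the $y_2$-direction.

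The main obstacle is the final step: transferring this $H_\#^{m+p}(Z)$-convergence to the stronger $X_{\alpha+p,\#}^{m+p}(Z)$-norm. On the compact strip $K$ the weight $\rho^\alpha$, the logarithmic corrections entering the definition of $H^{m+\lambda}_{\alpha,\#}(Z)$ and the multipliers $y_2^\lambda$ are uniformly bounded, so for positive regularity indices $m+\lambda\geq0$ one obtains directly
$$
\|y_2^\lambda(\psi_j-u)\|_{H^{m+\lambda}_{\alpha,\#}(Z)}\leq C(K,\alpha,\lambda)\,\|\psi_j-u\|_{H_\#^{m+p}(Z)}\xrightarrow[j\to\infty]{}0.
$$
When $m+\lambda<0$, the norm on the left is defined by duality against $H^{-m-\lambda}_{-\alpha,\#}(Z)$; since $y_2^\lambda(\psi_j-u)$ is supported in $K$, the duality pairing only tests against functions localised near $K$, on which the polynomial and logarithmic weights of $H^{-m-\lambda}_{-\alpha,\#}(Z)$ are equivalent to constants. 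Using the explicit characterisation of such dual spaces provided by Lemma \ref{lem_caracterisation_duaux}, one reduces to an unweighted local dual estimate and concludes in the same way. Combined with the trivial local $H^{m+p}(Z^1)$ convergence, this closes the argument.
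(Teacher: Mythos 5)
Your proposal is correct and follows essentially the same route as the paper: truncate in $y_2$ by applying Lemma \ref{lem_convergence} componentwise to each $y_2^\lambda u$, then reduce to the compactly supported case and invoke the mollification--periodization argument of Proposition \ref{prop_densite}. The only difference is that you make explicit the final transfer from $H^{m+p}_{\#}$-convergence of compactly supported functions to the $X^{m+p}_{\alpha+p,\#}(Z)$-norm, a step the paper leaves implicit.
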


\begin{proof}
Let $u$ be in $X_{\alpha+p,\#}^{m+p}(Z)$. Then according to Lemma \ref{lem_convergence}, $u$ can be approximated by 
$u_\ell\in X_{\alpha+p,\#}^{m+p}(Z)$ with compact support in the $y_2$ direction. 
Therefore $u_\ell$ belongs to $H_{\alpha,\#}^{m+p}(Z)$, and thanks to Proposition \ref{prop_densite}, $u_\ell$
can be approximated by $\psi_\ell\in\cD_\#(Z)$. This ends the proof. 
\end{proof} 

\noindent For $m\in\Z$, $p\in\N$ and $\alpha\in\{-\frac{1}{2},\frac{1}{2}\}$, 
we also define the space $X_{-\alpha-p,\#}^{m-p}(Z)$ that is the dual space of $X_{\alpha+p,\#}^{-m+p}(Z)$.
It is therefore a subspace of $\cDd'(Z)$. 
\begin{proposition}
\label{critique_inclusions}
Let $m,p\in\Z$ and $\alpha\in\{-\frac{1}{2},\frac{1}{2}\}$. 
$X_{\alpha+p+1,\#}^{m+p+1}(Z)$ is densely embedded in $X_{\alpha+p,\#}^{m+p}(Z)$.
\end{proposition}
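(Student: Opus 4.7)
The plan is to prove the continuous embedding and the density of the image in separate steps, treating first the direct regime $p\geq 0$ (where both $X$-spaces are defined intrinsically) and then reducing the dual regime $p\leq -1$ to it by transposition.

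For the direct regime, the continuous inclusion is read off directly from the definitions: if $u\in X_{\alpha+p+1,\#}^{m+p+1}(Z)$, then $u\in H_{\alpha,\#}^m(Z)$, $y_2^\lambda u\in H_{\alpha,\#}^{m+\lambda}(Z)$ for each $0\leq \lambda\leq p+1$, and $u\in H_{\loc,\#}^{m+p+1}(Z)$; dropping the $\lambda=p+1$ term and invoking the classical Sobolev embedding $H^{m+p+1}(Z^1)\hookrightarrow H^{m+p}(Z^1)$ places $u$ in $X_{\alpha+p,\#}^{m+p}(Z)$, and a term-by-term norm estimate combined with the equivalence of $\ell^p$- and $\ell^{p+1}$-norms on finite sums gives continuity. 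For density, the decisive observation is that $\cD_\#(Z)$ is already contained in $X_{\alpha+p+1,\#}^{m+p+1}(Z)$ itself: any test function is smooth with compact support in $y_2$, so the functions $y_2^\lambda\varphi$ remain smooth with compact support and hence sit in every weighted and local Sobolev space. Proposition \ref{prop_densite_critique} already provides density of $\cD_\#(Z)$ in $X_{\alpha+p,\#}^{m+p}(Z)$, so the image of the embedding contains this dense subset and the conclusion follows.

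For the dual regime $p\leq -1$, I would argue by transposition. Setting $p':=-(p+1)\geq 0$, the source and target of the claimed embedding are, by the dual definition in the paper (after the appropriate re-identification of the critical weight parameter $\alpha\leftrightarrow -\alpha$), the duals of two direct-regime $X$-spaces whose continuous dense embedding has just been established. Transposing such an embedding between reflexive Banach spaces --- the $X$-spaces being reflexive as finite $\ell^p$-combinations of $L^2$-based Sobolev spaces --- yields a continuous injective adjoint with weak-$*$ dense range, which by reflexivity upgrades to norm density in the dual, giving the conclusion in the negative range.

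The main obstacle in this proof is not analytic --- Proposition \ref{prop_densite_critique} has already absorbed the analytic content --- but rather the careful bookkeeping required to identify the dual-regime $X$-spaces in terms of direct-regime ones, so that the transposition argument lines up with the exact indices $(\alpha+p,m+p)$ and $(\alpha+p+1,m+p+1)$ appearing in the statement. Once this correspondence is established, the remainder of the argument is essentially mechanical.
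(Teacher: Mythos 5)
Your proposal is correct and follows essentially the same route as the paper: for $p\geq0$ the embedding is read off the definition and density is obtained from the inclusion $\cDd(Z)\subset X_{\alpha+p+1,\#}^{m+p+1}(Z)$ together with Proposition \ref{prop_densite_critique}, and the case $p\leq-1$ is handled by dualizing the direct-regime dense embedding (with the swap $\alpha\leftrightarrow-\alpha$). Your extra remarks on reflexivity to upgrade weak-$*$ density to norm density make explicit a point the paper leaves implicit, but the argument is the same.
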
 

\begin{proof}
If $p\in\N$, due to the definition of the space $X_{\alpha+p+1,\#}^{m+p+1}(Z)$, 
the imbedding is straightforward. 
Moreover the density of $\cD_\#(Z)$ in $X_{\alpha+p,\#}^{m+p}(Z)$ 
implies the density of $X_{\alpha+p+1,\#}^{m+p+1}(Z)$ in $X_{\alpha+p,\#}^{m+p}(Z)$.
Hence, if  $p\leq0$, the dense imbedding  holds~: 
$$X_{-\alpha-p,\#}^{-m-p}(Z)\subset X_{-\alpha-p-1,\#}^{-m-p-1}(Z).$$
By duality, the desired result follows.
\end{proof}

\begin{proposition}
\label{critique_derive}
Let $m,p\in\Z$ and $\alpha\in\{-\frac{1}{2},\frac{1}{2}\}$. If
$u$ belongs to $X_{\alpha+p,\#}^{m+p}(Z)$, then 
$\partial_j u$ belongs to $X_{\alpha+p,\#}^{m-1+p}(Z)$, for any $j=1,2$.
\end{proposition}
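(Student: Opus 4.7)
The argument naturally splits according to the sign of $p$.

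\emph{Case $p\in\N$.} I would work directly with the definition of $X_{\alpha+p,\#}^{m+p}(Z)$. For $u$ in this space, the continuity property \eqref{derivation} immediately gives $\partial_j u \in H_{\alpha,\#}^{m-1}(Z)$. For each $\lambda$ with $1\leq \lambda \leq p$, the Leibniz identities
$$
y_2^\lambda \partial_1 u = \partial_1(y_2^\lambda u), \qquad y_2^\lambda \partial_2 u = \partial_2(y_2^\lambda u) - \lambda\, y_2^{\lambda-1} u,
$$
together with $y_2^\lambda u \in H_{\alpha,\#}^{m+\lambda}(Z)$ and $y_2^{\lambda-1} u \in H_{\alpha,\#}^{m+\lambda-1}(Z)$ (the latter coming from the definition at index $\lambda-1$, or from $u\in H_{\alpha,\#}^m(Z)$ when $\lambda=1$), place $y_2^\lambda \partial_j u$ in $H_{\alpha,\#}^{m+\lambda-1}(Z)$. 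The local regularity $\partial_j u \in H_{\loc,\#}^{m+p-1}(Z)$ follows from $u\in H_{\loc,\#}^{m+p}(Z)$. Collecting these norm bounds yields continuity of $\partial_j$ from $X_{\alpha+p,\#}^{m+p}(Z)$ into $X_{\alpha+p,\#}^{m-1+p}(Z)$.

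\emph{Case $p<0$.} Setting $q := -p \in \N^{*}$ and noting that $-\alpha$ lies again in $\{-1/2,1/2\}$, I would unfold the duality definition given just before the proposition to recognize
$$
X_{\alpha+p,\#}^{m+p}(Z) \;=\; \bigl(X_{-\alpha+q,\#}^{-m+q}(Z)\bigr)', \qquad
X_{\alpha+p,\#}^{m-1+p}(Z) \;=\; \bigl(X_{-\alpha+q,\#}^{1-m+q}(Z)\bigr)',
$$
both preduals belonging to the directly defined family (shift parameter $q\in\N^{*}$). Invoking Case $p\in\N$ with starting weight $-\alpha$, starting regularity $1-m$ and shift $q$ would provide the continuous map
$\partial_j : X_{-\alpha+q,\#}^{1-m+q}(Z) \to X_{-\alpha+q,\#}^{-m+q}(Z)$, whose transpose is a bounded operator between the corresponding duals. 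By density of $\cDd(Z)$ in each predual (Proposition \ref{prop_densite_critique}), this transpose coincides up to sign with distributional differentiation of $u$, so $\partial_j u$ belongs to $X_{\alpha+p,\#}^{m-1+p}(Z)$ with the expected norm control.

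The main delicate point I foresee is precisely this bookkeeping of indices in the duality step, together with the verification that the transpose of the derivative map truly represents $\pm\partial_j$ acting distributionally on $u$. Once that identification is laid out, the Case $p\in\N$ argument is a direct Leibniz computation, and the Case $p<0$ argument reduces to a standard transposition and extension by continuity based on Proposition \ref{prop_densite_critique}.
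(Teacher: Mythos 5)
Your proposal is correct and follows essentially the same route as the paper: for $p\geq 0$ a direct Leibniz computation term by term on $y_2^\lambda\partial_j u=\partial_j(y_2^\lambda u)-\partial_j(y_2^\lambda)u$ using \eqref{derivation}, and for $p<0$ a transposition of the $p\in\N$ case through the duality defining $X_{\alpha+p,\#}^{m+p}(Z)$, identified with distributional differentiation via the density of $\cDd(Z)$. The index bookkeeping in your duality step matches the paper's (which tests against $\varphi\in X_{-\alpha-p,\#}^{-m+1-p}(Z)$), so nothing further is needed.
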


\begin{proof}
(i) Assume that $p\geq0$. If $u\in X_{\alpha+p,\#}^{m+p}(Z)$, then $u\in H_{\alpha,\#}^m(Z)$
and $\partial_j u\in H_{\alpha,\#}^{m-1}(Z)$. Besides, for any $\lambda\in\N$, such that $1\leq\lambda\leq p$, we have
$y_2^\lambda(\partial_j u)=\partial_j(y_2^\lambda u)-\partial_j(y_2^\lambda) u$. Since $y_2^\lambda u\in H_{\alpha,\#}^{m+\lambda}(Z)$,
it follows that $\partial_j(y_2^\lambda u)\in H_{\alpha,\#}^{m-1+\lambda}(Z)$. Thus it is clear that 
$y_2^\lambda(\partial_1 u)$ belongs to $H_{\alpha,\#}^{m-1+\lambda}(Z)$.
Furthermore, we have $\partial_2(y_2^\lambda)u=\lambda y_2^{\lambda-1}u$ and since $0\leq\lambda-1\leq p-1$,
 $y_2^{\lambda-1}u$ belongs to $H_{\alpha,\#}^{m+\lambda-1}(Z)$. We deduce that for any $\lambda\in\N$, such that $1\leq\lambda\leq p$,
$y_2^\lambda(\partial_j u)$ belongs $H_{\alpha,\#}^{m-1+\lambda}(Z)$. Finally it is straightforward that if
$u$ is in $H_{\#}^{m+p}(Z^1)$, then $\partial_j u$ is in $H_{\#}^{m+p-1}(Z^1)$.\\
(ii) Let now $p<0$. If $u\in X_{\alpha+p,\#}^{m+p}(Z)$, then for any $\varphi\in\cDd(Z)$, we can write 
$$\<\partial_j u,\varphi\>_{\cDd'(Z)\times\cDd(Z)}=-\<u,\partial_j\varphi\>_{\cDd'(Z)\times\cDd(Z)}.$$
Thanks to (i), for any $\varphi\in X_{-\alpha-p,\#}^{-m+1-p}(Z)$, $\partial_j\varphi$ belongs to $X_{-\alpha-p,\#}^{-m-p}(Z)$
which ends the proof.
\end{proof}

\begin{proposition}
\label{critique_y2}
Let $m, p\in\Z$ and $\alpha\in\{-\frac{1}{2},\frac{1}{2}\}$. If $u$ belongs to 
$X_{\alpha+p,\#}^{m+p}(Z)$, then $y_2 u$ belongs to $X_{\alpha+p-1,\#}^{m+p}(Z)\equiv X_{\alpha+p-1,\#}^{m+1+p-1}(Z)$.
\end{proposition}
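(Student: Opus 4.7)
The plan is to argue by cases on the sign of $p$, and in the negative case to reduce to the positive case by duality, exactly mirroring the structure of Proposition~\ref{critique_derive}. The first observation is purely algebraic: the target space can be rewritten as
$$
X_{\alpha+p-1,\#}^{m+p}(Z)=X_{\alpha+(p-1),\#}^{(m+1)+(p-1)}(Z),
$$
so that the parameter $p$ is shifted by $-1$ while the regularity index $m$ is shifted by $+1$, keeping $\alpha\in\{-1/2,1/2\}$ unchanged. This is the canonical writing needed to invoke the definition on the left-hand side.

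First, I would handle the case $p\geq 1$. Here the proof consists simply in checking the three defining conditions of $X_{\alpha+(p-1),\#}^{(m+1)+(p-1)}(Z)$ applied to $y_2u$. Namely: (i) $y_2u\in H_{\alpha,\#}^{m+1}(Z)$, which is already one of the defining conditions of $u\in X_{\alpha+p,\#}^{m+p}(Z)$ since $1\leq 1\leq p$; (ii) for every $\lambda$ with $1\leq\lambda\leq p-1$, one has $y_2^{\lambda}(y_2u)=y_2^{\lambda+1}u\in H_{\alpha,\#}^{m+1+\lambda}(Z)$, which again follows from the definition of $X_{\alpha+p,\#}^{m+p}(Z)$ because $2\leq\lambda+1\leq p$; and (iii) $y_2u\in H_{\loc,\#}^{m+p}(Z)$ is immediate since $y_2$ is smooth and bounded on $Z^1$ and more generally on every bounded subset in $y_2$. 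Continuity of the map $u\mapsto y_2u$ with respect to the two norms is transparent: each summand of the target norm is literally one of the summands of the source norm.

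Next, I would treat the case $p\leq 0$ by duality, using the definition $X_{\alpha+p,\#}^{m+p}(Z)=(X_{-\alpha-p,\#}^{-m-p}(Z))'$ and analogously for the target. The product $y_2u$ is then defined by
$$
\<y_2u,\varphi\>:=\<u,y_2\varphi\>,\qquad\forall\varphi\in X_{-\alpha-p+1,\#}^{-m-p}(Z).
$$
To make this definition legitimate I must verify that $\varphi\mapsto y_2\varphi$ maps $X_{-\alpha-p+1,\#}^{-m-p}(Z)$ continuously into $X_{-\alpha-p,\#}^{-m-p}(Z)$. But rewriting $X_{-\alpha-p+1,\#}^{-m-p}(Z)=X_{(-\alpha)+(-p+1),\#}^{(-m-1)+(-p+1)}(Z)$ with $\widetilde{\alpha}=-\alpha\in\{-1/2,1/2\}$, $\widetilde{p}=-p+1\geq 1$ and $\widetilde{m}=-m-1$, this is precisely the positive-$p$ case established above. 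Hence
$$
|\<y_2u,\varphi\>|=|\<u,y_2\varphi\>|\leq\|u\|_{X_{\alpha+p,\#}^{m+p}(Z)}\|y_2\varphi\|_{X_{-\alpha-p,\#}^{-m-p}(Z)}\leq C\|u\|_{X_{\alpha+p,\#}^{m+p}(Z)}\|\varphi\|_{X_{-\alpha-p+1,\#}^{-m-p}(Z)}.
$$
Density of $\cD_{\#}(Z)$ in the test space (Proposition~\ref{prop_densite_critique}) guarantees that this duality definition of $y_2u$ agrees with the pointwise product on smooth functions, so that no ambiguity arises.

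The only real subtlety I anticipate is the bookkeeping: keeping track of which normalization $(\widetilde{\alpha},\widetilde{p},\widetilde{m})$ of a given $X_{\beta,\#}^{n}(Z)$ to use, and in particular recognizing that the shift $(m,p)\mapsto(m+1,p-1)$ preserves $m+p$ and $\alpha$, so that duality closes under the stated operation. Once this algebraic reindexing is done cleanly, the proof reduces, in Case~A, to reading off defining conditions, and in Case~B, to invoking Case~A on the test function.
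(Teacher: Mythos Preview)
Your proof is correct and follows essentially the same approach as the paper: split on the sign of $p$, verify the defining conditions directly for $p\geq 1$ using the shift $(m,p)\mapsto(m+1,p-1)$, and handle $p\leq 0$ by duality, reducing to the positive case applied to the test function. Your write-up is slightly more explicit about the reindexing and the continuity bound in the dual case, but the argument is the same.
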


\begin{proof}
(i) Assume that $p\geq1$. If $u$ belongs to $X_{\alpha+p,\#}^{m+p}(Z)$, then $u$ belongs to $H_{\alpha,\#}^{m}(Z)$
and $y_2 u$ belongs to $H_{\alpha,\#}^{m+1}(Z)$. Next, for any $\lambda\in\N$, such that $1\leq\lambda\leq p-1$, 
we have $y_2^\lambda(y_2 u)=y_2^{\lambda+1}u\in H_{\alpha,\#}^{m+\lambda+1}(Z)$.
Finally if $u$ belongs to $H_{\#}^{m+p}(Z^1)$, then $y_2 u$ also belongs to $H_{\#}^{m+p}(Z^1)$.
We thus deduce that $y_2 u\in X_{\alpha+p-1,\#}^{m+p}(Z)$.\\
(ii) Assume now $p\leq0$. For any $\varphi\in\cDd(Z)$, we can write 
$$\<y_2 u,\varphi\>_{\cDd'(Z)\times\cDd(Z)}=\<u,y_2\varphi\>_{\cDd'(Z)\times\cDd(Z)}.$$
Thanks to (i), for any $\varphi\in X_{-\alpha-p+1,\#}^{-m-p}(Z)\equiv X_{-\alpha-p+1,\#}^{-m-1-p+1}(Z)$, 
$y_2\varphi$ belongs to $X_{-\alpha-p,\#}^{-m-p}(Z)$ which ends the proof.    
\end{proof}

\noindent The above properties allow to extend Theorem \ref{regularite_Hm}.

\begin{thm}
\label{th_iso_critique}
Let $\alpha\in\{-\frac{1}{2},\frac{1}{2}\}$ and $\ell\in\N$. Then the  operator defined by
\begin{equation}
\label{th_iso_critique_eq}
\Delta\,:\,X_{\alpha+\ell,\#}^{1+\ell}(Z)/\R\mapsto X_{\alpha+\ell,\#}^{-1+\ell}(Z)\bot\R
\end{equation}
is an isomorphism.
\end{thm}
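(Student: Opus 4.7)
I would argue by induction on $\ell\in \N$, mirroring in the critical setting the scheme of Theorem \ref{regularite_Hm}. For the base case $\ell=0$, the definition gives $X^{m}_{\alpha,\#}(Z)=H^{m}_{\alpha,\#}(Z)$ when $p=0$, so \eqref{th_iso_critique_eq} collapses to $\Delta:H^1_{\alpha,\#}(Z)/\R\to H^{-1}_{\alpha,\#}(Z)\bot\R$, which is exactly Theorem \ref{premiers_isomorphismes} at the admissible endpoints $\alpha = \pm 1/2$. For the induction step, continuity of $\Delta$ follows from iterated application of Proposition \ref{critique_derive}. Injectivity is obtained by noting that any $u\in X^{k+2}_{\alpha+k+1,\#}(Z)$ also lies in $H^{1}_{\alpha,\#}(Z)$: Proposition \ref{prop_noyau} applied with $m=1$ and the computation $q(1,\pm 1/2)=0$ from \eqref{def_q} then forces $u\in \pP_0=\R$.

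The heart of the argument is surjectivity. Given $f\in X^{k}_{\alpha+k+1,\#}(Z)\bot\R$, Proposition \ref{critique_inclusions} yields $f\in X^{k-1}_{\alpha+k,\#}(Z)\bot\R$, and the induction hypothesis produces $u\in X^{k+1}_{\alpha+k,\#}(Z)/\R$ with $\Delta u=f$. One then has to promote $u$ to $X^{k+2}_{\alpha+k+1,\#}(Z)$, which amounts to establishing $(\mathrm{i})$ $u\in H^{k+2}_{\loc,\#}(Z^1)$ and $(\mathrm{ii})$ $y_2^{k+1}u\in H^{k+2}_{\alpha,\#}(Z)$. Point $(\mathrm{i})$ is immediate from standard interior elliptic regularity applied to $\Delta u=f\in H^{k}_{\loc,\#}(Z)$.

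For $(\mathrm{ii})$, the key Leibniz identity is, for $i=1,2$,
$$
\Delta(y_2\,\partial_i u) \;=\; y_2\,\partial_i f \;+\; 2\,\partial_2\partial_i u .
$$
Propositions \ref{critique_derive} and \ref{critique_y2} combined with $f\in X^{k}_{\alpha+k+1,\#}(Z)$ and $u\in X^{k+1}_{\alpha+k,\#}(Z)$ place each term of the right-hand side in $X^{k-1}_{\alpha+k,\#}(Z)$; a density and Green-formula argument verifies orthogonality to $\R$. Applying the induction hypothesis again one finds $w_i\in X^{k+1}_{\alpha+k,\#}(Z)/\R$ with $\Delta w_i = \Delta(y_2\,\partial_i u)$. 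Since both $w_i$ and $y_2\,\partial_i u$ lie in $H^{1}_{\alpha,\#}(Z)$ and share the same Laplacian, Proposition \ref{prop_noyau} forces their difference to be a constant, so $y_2\,\partial_i u\in X^{k+1}_{\alpha+k,\#}(Z)$. Unwinding this membership gives $y_2^{k+1}\partial_i u\in H^{k+1}_{\alpha,\#}(Z)$ for $i=1,2$; combining with $y_2^{k}u\in H^{k+1}_{\alpha,\#}(Z)$ from the induction and the identity $\partial_2(y_2^{k+1}u)=(k+1)y_2^{k}u+y_2^{k+1}\partial_2 u$, one obtains $\partial_j(y_2^{k+1}u)\in H^{k+1}_{\alpha,\#}(Z)$ for $j=1,2$. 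The missing zero-order condition is then a direct check using $|y_2|^{k+1}\leq\rho^{k+1}$ and $u\in H^{1}_{\alpha,\#}(Z)$, and yields $y_2^{k+1}u\in H^{k+2}_{\alpha,\#}(Z)$.

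\textbf{Main obstacle.} The delicate point is the careful bookkeeping of the logarithmic weights at the critical values $\alpha=\pm 1/2$: one must check that the zero-order estimate for $y_2^{k+1}u$ falls in the right space with the appropriate $(\ln(1+\rho^2))^{-1}$ factor entering the definition of $H^{k+2}_{\alpha,\#}(Z)$, and also justify rigorously that $\Delta(y_2\,\partial_i u)$ satisfies the polar condition in $X^{k-1}_{\alpha+k,\#}(Z)\bot\R$ through approximation by compactly supported smooth data, mimicking the density arguments developed in Proposition \ref{prop_densite_critique} and Lemma \ref{lem_convergence}.
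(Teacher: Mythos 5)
Your proposal is correct and follows essentially the same route as the paper: induction on $\ell$ with base case Theorem \ref{premiers_isomorphismes}, injectivity via the inclusion $X_{\alpha+k+1,\#}^{2+k}(Z)\subset H_{\alpha,\#}^1(Z)$ and Proposition \ref{prop_noyau}, and surjectivity via the identity $\Delta(y_2\partial_i u)=y_2\partial_i f+2\partial_2\partial_i u$, Propositions \ref{critique_derive} and \ref{critique_y2}, the duality argument for the polar condition, and interior elliptic regularity for the local part. The only differences are presentational (you spell out the unwinding of $y_2^{k+1}u\in H^{k+2}_{\alpha,\#}(Z)$ a bit more explicitly than the paper does).
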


\begin{proof}
Observe first that since $X_{\alpha,\#}^1(Z)=H_{\alpha,\#}^1(Z)$ 
and $X_{\alpha,\#}^{-1}(Z)=H_{\alpha,\#}^{-1}(Z)$, the case $\ell=0$ is proved in Theorem
\ref{premiers_isomorphismes}. 
Now we proceed by induction, as in the proof of Theorem \ref{regularite_Hm}.
Assume that the  statement is true for $\ell=k$ and let us prove that it is then true for
$\ell=k+1$. The Laplace operator defined by \eqref{th_iso_critique_eq} is clearly linear and continuous.
It is also injective~:  if $u$ belongs to $X_{\alpha+k+1,\#}^{2+k}(Z)\subset H_{\alpha,\#}^1(Z)$ and 
is harmonic then it is a constant. Let us prove that it is onto. 
Let $f\in X_{\alpha+k+1,\#}^{k}(Z)\bot\R$. Then $f\in X_{\alpha+k,\#}^{-1+k}(Z)\bot\R$ and the induction
assumption implies that there exists $u\in X_{\alpha+k,\#}^{1+k}(Z)$ such that $\Delta u=f$. We 
now consider the equality 
\begin{equation}
\label{th_iso_critique_eq2}
\Delta(y_2\partial_i u)=y_2\partial_i f+2\partial_2\partial_i u.
\end{equation}
Using Proposition \ref{critique_derive} and Proposition \ref{critique_y2}, we deduce that 
$\Delta(y_2\partial_i u)$ belongs to $X_{\alpha+k,\#}^{-1+k}(Z)\subset X_{\alpha+k-1,\#}^{-2+k}(Z)$.
Moreover, thanks  to Propositions \ref{critique_derive} and \ref{critique_y2}, 
if $u\in X_{\alpha+k,\#}^{1+k}(Z)$ then $y_2\partial_i u\in X_{\alpha+k-1,\#}^k(Z)$
and for any $\varphi\in X_{-\alpha-k+1,\#}^{2-k}(Z)$, we have 
$$\<\Delta(y_2\partial_i u),\varphi\>_{X_{\alpha+k-1,\#}^{-2+k}(Z)\times X_{-\alpha-k+1,\#}^{2-k}(Z)}
=\<y_2\partial_i u,\Delta\varphi\>_{X_{\alpha+k-1,\#}^{k}(Z)\times X_{-\alpha-k+1,\#}^{-k}(Z)}.$$ 
Since $\R\subset X_{-\alpha-k+1,\#}^{2-k}(Z)$, we shall  take $\varphi\in\R$. This shows that 
$\Delta(y_2\partial_i u)\in X_{\alpha+k,\#}^{-1+k}(Z)\bot\R$. 
Thanks to the induction assumption, there exists $v\in X_{\alpha+k,\#}^{1+k}(Z)$ satisfying $\Delta v=\Delta(y_2\partial_i u)$. 
Hence $v-y_2\partial_i u\in\R$ and since $\R\subset X_{\alpha+k,\#}^{1+k}(Z)$, it follows that
$y_2\partial_i u\in X_{\alpha+k,\#}^{1+k}(Z)$. Therefore summarizing, we obtained that $u\in H_{\alpha,\#}^1(Z)$,
for any $1\leq\lambda\leq k+1$ $y_2 u\in H_{\alpha,\#}^{1+\lambda}(Z)$. It remains to prove that $u\in H_{\loc,\#}^{k+2}(Z)$.
Let $\chi\in\D(\R)$ and consider the function $\y\mapsto\chi(y_2)u(\y)$ for almost every $\y\in Z$. Then
$\Delta(\chi u)=\chi f+2\nabla\chi\nabla u+u\Delta\chi$ belongs to $H_{\loc,\#}^k(Z)$. It follows from 
standard inner elliptic regularity results that $\chi u$ belongs to $H_{\loc,\#}^{2+k}(Z)$  
 which in turn yields $u\in H_{\loc,\#}^{2+k}(Z)$.
\end{proof}

\begin{rmk}
{\rm
The above theorem extends the Calder{\'o}n-Zygmund inequality \eqref{Calderon} to $\alpha\in\{\frac{1}{2},\frac{3}{2}\}$.
}
\end{rmk}

\noindent Proceeding as in the non-critical cases, we can extend Theorems \ref{th_Hm}, \ref{th_Hm_2}
and \ref{iso_generales}. More precisely, we have~:

\begin{thm}
\label{th_Hm_critique}
Let $\alpha$ be a real number satisfying $1/2\leq\alpha\leq3/2$. 
For $m\in\N$, $m\geq3$, the mapping 
\begin{equation}
\label{th_Hm_eq_critique}
\Delta\,:\,H_{\alpha,\#}^m(Z)/\PP_{m-2}'\mapsto H_{\alpha,\#}^{m-2}(Z)/\PP_{m-4}'
\end{equation}
is an isomorphism.
\end{thm}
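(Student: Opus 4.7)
The plan is to extend Theorem \ref{th_Hm} from the open range $1/2 < \alpha < 3/2$ to the closed interval $1/2 \leq \alpha \leq 3/2$. Since the strict-inequality case is already Theorem \ref{th_Hm}, my work is for the critical endpoints $\alpha \in \{1/2, 3/2\}$. The idea is to replay the proof of Theorem \ref{th_Hm} verbatim, the only substitution being that the Calder\'on-Zygmund inequality \eqref{Calderon} is now invoked in its extended form, valid on $\alpha \in \{1/2, 3/2\}$ thanks to the remark following Theorem \ref{th_iso_critique} (whose proof rests on the regularity result Theorem \ref{th_iso_critique} stated in the $X$-spaces).

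First I would settle the algebraic set-up: one checks from \eqref{def_q} that $q(m,\alpha) \geq m - 2$ for $\alpha \in [1/2, 3/2]$, hence $\PP_{m-2}' \subset \PP_{q(m,\alpha)}' \subset H^m_{\alpha, \#}(Z)$, and $\Delta$ sends $\PP_{m-2}'$ into $\PP_{m-4}'$, so the induced map between quotients is well defined and continuous. Injectivity is immediate: by Proposition \ref{prop_noyau}, the kernel of $\Delta$ on $H^m_{\alpha, \#}(Z)$ is $\pP_{\min(q(m,\alpha),1)} \subseteq \PP_1' \subset \PP_{m-2}'$ for $m \geq 3$.

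Next I would establish the a priori estimate
$$
\inf_{p \in \PP_{m-2}'} \|u + p\|_{H^m_{\alpha, \#}(Z)} \leq C \|\Delta u\|_{H^{m-2}_{\alpha, \#}(Z)/\PP_{m-4}'}
$$
by iterating the extended Calder\'on-Zygmund bound on $\partial^\nu u$ for $|\nu| \leq m - 2$ and then combining with the weighted Poincar\'e estimate of Theorem \ref{thm_Poincare} to control the lower-order pieces of the norm. Density of $\cDd(Z)$ in $H^m_{\alpha,\#}(Z)$ (Proposition \ref{prop_densite}) passes the inequality from smooth periodic functions to the full space modulo $\PP_{m-2}'$. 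The closed range property of $\Delta$ follows, and surjectivity is obtained from the Banach Closed Range Theorem applied exactly as in the proof of Theorem \ref{th_Hm}.

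The main obstacle will be the bookkeeping of the logarithmic weights $(\ln(1+\rho^2))^{-1}$ that appear in the low-order pieces of $H^m_{\alpha, \#}(Z)$ at critical $\alpha$. When I apply Calder\'on-Zygmund to $\partial^\nu u$ and then reassemble the full norm by summing over $\nu$, I must rely on the weaker inclusions \eqref{inclusions_poids_log} rather than the cleaner \eqref{inclusions_poids}; in particular, closing the estimate in the full $H^m_{\alpha,\#}$-norm (which carries the log weights on the $|\lambda| \leq k(m,\alpha)$ terms) requires that the Poincar\'e inequality used does not itself lose a logarithmic factor, which is precisely the content of Theorem \ref{thm_Poincare} in the critical range. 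This careful matching of weights at the endpoints $\alpha \in \{1/2,3/2\}$ is what makes the argument non-trivial relative to the non-critical case.
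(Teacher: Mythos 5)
Your proposal follows the same route the paper intends: the paper proves Theorem \ref{th_Hm_critique} by "proceeding as in the non-critical cases," i.e.\ replaying the Calder\'on--Zygmund plus Closed Range Theorem argument of Theorem \ref{th_Hm}, with the critical-weight version of \eqref{Calderon} supplied by Theorem \ref{th_iso_critique} and its following remark (and the identification $X^{m+p}_{1/2+p,\#}(Z)=H^{m+p}_{1/2+p,\#}(Z)$ handling the endpoints). Your set-up of the quotients, the kernel identification via Proposition \ref{prop_noyau}, and the a priori estimate closed with Theorem \ref{thm_Poincare} are all consistent with that scheme, so the proposal is correct and essentially identical in approach.
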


\begin{thm}
\label{th_Hm_2_critique}
Let $\alpha$ be a real number satisfying $1/2\leq\alpha\leq3/2$.
For $m\in\N$, $m\geq3$, the mapping 
\begin{equation}
\label{th_Hm_2_eq_critique}
\Delta\,:\,H_{\alpha,\#}^m(Z)/\PP_{m-2}^{'\Delta}\mapsto H_{\alpha,\#}^{m-2}(Z)
\end{equation}
is an isomorphism.
\end{thm}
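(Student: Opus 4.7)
The approach is to lift Theorem~\ref{th_Hm_critique} — which gives an isomorphism between two quotient spaces — to the statement of Theorem~\ref{th_Hm_2_critique} by removing the quotient in the target at the cost of enlarging the quotient in the source. The key algebraic fact is that elements of $\PP_{m-2}'$ are polynomials in the single variable $y_2$, so on $\PP_{m-2}'$ the Laplacian reduces to $\partial_2^2$, which sends $\PP_{m-2}'$ surjectively onto $\PP_{m-4}'$ with kernel exactly the polynomials of degree at most one, i.e.\ $\pP_{m-2} = \PP_1' = \PP_{m-2}^{'\Delta}$. The induced algebraic bijection
$$
\PP_{m-2}'/\PP_{m-2}^{'\Delta} \;\longrightarrow\; \PP_{m-4}'
$$
serves as the bridge between the two statements.

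To prove surjectivity of \eqref{th_Hm_2_eq_critique}, I would take $f \in H^{m-2}_{\alpha,\#}(Z)$; Theorem~\ref{th_Hm_critique} supplies $u \in H^m_{\alpha,\#}(Z)$ with $\Delta u - f =: p \in \PP_{m-4}'$. By the remark above there exists $q \in \PP_{m-2}'$ with $\Delta q = p$; since $\PP_{m-2}' \subset H^m_{\alpha,\#}(Z)$ for the range of $\alpha$ and $m$ considered (as one verifies from the definition~\eqref{def_q} of $q(m,\alpha)$, with the critical logarithmic weight precisely saving the endpoints $\alpha=1/2$ and $\alpha=3/2$), the function $\tilde u := u - q$ lies in $H^m_{\alpha,\#}(Z)$ and satisfies $\Delta \tilde u = f$ in $H^{m-2}_{\alpha,\#}(Z)$. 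To identify the kernel, I would invoke Proposition~\ref{prop_noyau}: a harmonic element of $H^m_{\alpha,\#}(Z)$ lies in $\pP_{\min(q(m,\alpha),1)}$, and a direct computation shows $q(m,\alpha) \geq 1$ for all $m \geq 3$ and $\alpha \in [1/2,3/2]$, so this kernel equals $\pP_1 = \PP_1' = \PP_{m-2}^{'\Delta}$. This yields the algebraic bijection $H^m_{\alpha,\#}(Z)/\PP_{m-2}^{'\Delta} \to H^{m-2}_{\alpha,\#}(Z)$.

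Continuity of the forward map is inherited from $\Delta : H^m_{\alpha,\#}(Z)\to H^{m-2}_{\alpha,\#}(Z)$, and boundedness of the inverse then follows from the open mapping theorem. The main subtlety is bookkeeping: the ambiguity in $u$ modulo $\PP_{m-2}'$ (coming from Theorem~\ref{th_Hm_critique}) must combine with the ambiguity in $q$ modulo $\PP_{m-2}^{'\Delta}$ so that $\tilde u$ is well-defined modulo $\PP_{m-2}^{'\Delta}$. This is a direct check using the short exact sequence $0 \to \PP_{m-2}^{'\Delta} \to \PP_{m-2}' \xrightarrow{\partial_2^2} \PP_{m-4}' \to 0$ and contains no analytic difficulty. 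All of the hard analysis has already been packaged into Theorem~\ref{th_Hm_critique}; the present theorem is a clean corollary obtained by rearranging the polynomial quotients.
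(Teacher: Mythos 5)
Your proposal is correct and follows exactly the route the paper intends: the paper presents Theorem \ref{th_Hm_2_critique} as a direct consequence of Theorem \ref{th_Hm_critique} (mirroring how Theorem \ref{th_Hm_2} follows from Theorem \ref{th_Hm}), and your argument simply makes explicit the polynomial bookkeeping via the exact sequence $0\to\PP_{m-2}^{'\Delta}\to\PP_{m-2}'\xrightarrow{\partial_2^2}\PP_{m-4}'\to 0$, the inclusion $\PP_{m-2}'\subset H_{\alpha,\#}^m(Z)$ checked against \eqref{def_q}, and the kernel identification from Proposition \ref{prop_noyau}. All the verifications you sketch (in particular $q(m,\alpha)=[m-1/2-\alpha]\geq m-2$ on $[1/2,3/2]$) are accurate, so this is the same proof the paper leaves implicit.
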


\begin{thm}
\label{iso_generales_critique}
Let $\alpha\in\{\frac{1}{2},\frac{3}{2}\}$ and
let $\ell\geq1$ be an integer. Then the Laplace operators defined by
\begin{equation}
\label{iso_generale_1_critique}
\Delta\,:\,X_{-\alpha+\ell,\#}^1(Z)/\pP_{1-\ell}\mapsto X_{-\alpha+\ell,\#}^{-1}(Z)\bot\PP_{-1+\ell}^{'\Delta}
\end{equation} 
and
\begin{equation}
\label{iso_generale_2_critique}
\Delta\,:\,X_{\alpha-\ell,\#}^1(Z)/\PP_{-1+\ell}^{'\Delta}\mapsto X_{\alpha-\ell,\#}^{-1}(Z)\bot\pP_{1-\ell}
\end{equation} 
\noindent are isomorphisms.
\end{thm}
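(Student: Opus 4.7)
The argument follows the outline of Theorem \ref{iso_generales}, systematically substituting the $X$-scale for the $H$-scale at the critical indices and invoking the critical-case isomorphisms (Theorems \ref{th_iso_critique}, \ref{th_Hm_critique}, \ref{th_Hm_2_critique}) in place of their non-critical analogues. First, I would establish that for every integer $m \geq 2$ and every $\alpha \in \{1/2, 3/2\}$, the operator
$$\Delta : X_{\alpha,\#}^m(Z)/\PP_{m-2}^{'\Delta} \mapsto X_{\alpha,\#}^{m-2}(Z) \bot \pP_{-m+2}$$
is onto. When $m=2$, this is precisely Theorem \ref{th_iso_critique} applied with $\ell=1$ (noting that $\PP_{0}^{'\Delta}=\pP_0=\R$). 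When $m \geq 3$, one has $\pP_{-m+2}=\{0\}$, and the statement reduces to Theorem \ref{th_Hm_2_critique}, the passage between the $H$- and $X$-frameworks being handled by the identifications from Propositions \ref{critique_inclusions}, \ref{critique_derive}, \ref{critique_y2}.

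Self-adjointness of $\Delta$ combined with the duality pairing between $X_{\alpha+p,\#}^{m+p}(Z)$ and $X_{-\alpha-p,\#}^{-m+p}(Z)$ (as recorded after Proposition \ref{prop_densite_critique}) transposes the previous surjectivity into
$$\Delta : X_{-\alpha,\#}^{-m+2}(Z)/\pP_{-m+2} \mapsto X_{-\alpha,\#}^{-m}(Z) \bot \PP_{m-2}^{'\Delta}, \quad \text{onto.}$$
Next I would replicate the inductive shift from the proof of Theorem \ref{th_iso_critique}, built on the identity $\Delta(y_2\partial_i u) = y_2\partial_i f + 2\partial_2\partial_i u$ together with the stability of the $X$-scale under multiplication by $y_2$ and under differentiation (Propositions \ref{critique_y2} and \ref{critique_derive}). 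An induction on $\ell$ then produces, for every integer $\ell \geq 1$,
$$\Delta : X_{-\alpha+\ell,\#}^{-m+2+\ell}(Z)/\pP_{-m+2} \mapsto X_{-\alpha+\ell,\#}^{-m+\ell}(Z) \bot \PP_{m-2}^{'\Delta}$$
as an isomorphism. Setting $m=\ell+1$ yields \eqref{iso_generale_1_critique}, and one further duality-and-transposition step delivers \eqref{iso_generale_2_critique}.

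The main obstacle is the inductive shift itself: at each step one has to verify that $\Delta(y_2\partial_i u)$ actually lies in $X_{-\alpha+\ell,\#}^{-m+\ell}(Z)\bot\PP_{m-2}^{'\Delta}$ before the induction hypothesis can be applied. This orthogonality reduces to checking that each harmonic test polynomial, which thanks to Proposition \ref{critique_inclusions} sits in the appropriate dual $X$-space at every intermediate index, pairs to zero against $\Delta(y_2\partial_i u)$ — an argument parallel to the one used in Theorem \ref{th_iso_critique} to show that $\Delta(y_2\partial_i u) \bot \R$. Once this compatibility is granted, the induction hypothesis furnishes a preimage $v$, the difference $v - y_2\partial_i u$ is harmonic and hence polynomial, and a standard inner elliptic regularity argument (exactly as in the closing lines of Theorem \ref{th_iso_critique}) lifts $y_2\partial_i u$ into the $X$-space at the next level, closing the induction.
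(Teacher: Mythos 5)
Your proposal is correct and follows essentially the same route as the paper, which proves this theorem only by the remark ``proceeding as in the non-critical cases'': you reproduce the scheme of Theorem \ref{iso_generales} (surjectivity for $m\geq 2$ from Theorems \ref{th_iso_critique} and \ref{th_Hm_2_critique}, duality/transposition, the inductive weight shift via $\Delta(y_2\partial_i u)=y_2\partial_i f+2\partial_2\partial_i u$, then $m=\ell+1$ and one more dualization) with the critical-case ingredients substituted in. The only quibble is that the identification of the $X$- and $H$-scales at $\alpha\in\{1/2,3/2\}$ is really Proposition \ref{prop_identite_espaces_critiques} rather than Propositions \ref{critique_inclusions}--\ref{critique_y2}, but this does not affect the argument.
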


\begin{rmk}
{\rm 
At this step we are able to state Thereom \ref{thm.isom.final} that we recall here.
 Let $m\in\Z$ and $\alpha\in\R$, then the Laplace operator  
defined by
\begin{equation}
\label{iso_final}
\Delta\,:X_{\alpha,\#}^{m+2}(Z)/\PP_{q(m+2,\alpha)}^{'\Delta}\mapsto X_{\alpha,\#}^m(Z)\bot\PP_{q(-m,-\alpha)}^{'\Delta}
\end{equation}
is an isomorphism.
}
\end{rmk}

\noindent  In order to prove our main result stated in Theorem \ref{thm.final}, let us first note that we might compare the spaces 
 $X_{\alpha+p,\#}^{m+p}(Z)$ and $H_{\alpha+p,\#}^{m+p}(Z)$ for $\alpha\in\{-\frac{1}{2},\frac{1}{2}\}$.
\begin{proposition}
\label{prop_identite_espaces_critiques}
Let $m,p$ be two integers and let $\alpha\in\{-\frac{1}{2},\frac{1}{2}\}$.
Then 
\begin{equation}
X_{\alpha+p,\#}^{m+p}(Z)\subset H_{\alpha+p,\#}^{m+p}(Z).
\end{equation}
\label{prop_identite_espaces_critiques_inclus}
If, moreover $\alpha=\frac{1}{2}$, then 
\begin{equation}
\label{prop_identite_espaces_critiques_egal}
X_{\frac{1}{2}+p,\#}^{m+p}(Z)=H_{\frac{1}{2}+p,\#}^{m+p}(Z).
\end{equation}
\end{proposition}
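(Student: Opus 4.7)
I first reduce to the case $p\geq 0$: when $p=0$ both spaces trivially coincide with $H^m_{\alpha,\#}(Z)$, and when $p\leq -1$ one hopes to obtain the claim by transposition from the $p'=-p\geq 1$ statement applied with $\alpha'=-\alpha$, using the duality definition of $X^{m+p}_{\alpha+p,\#}$ as $(X^{-m-p}_{-\alpha-p,\#})'$ together with the standard duality for $H$. The substantive work therefore lies in the primal range $p\geq 1$, where both spaces are described by weighted $L^2$-conditions on derivatives and one may compare them pointwise. The key algebraic tool is the Leibniz identity
\[
\partial^\mu(y_2^\lambda u) = y_2^\lambda\partial^\mu u + \sum_{j=1}^{\min(\mu_2,\lambda)} \binom{\mu_2}{j}\frac{\lambda!}{(\lambda-j)!}\, y_2^{\lambda-j}\partial^{\mu-je_2}u, \qquad e_2=(0,1),
\]
which translates between derivatives of $y_2^\lambda u$ (appearing in the $X$-definition) and $y_2$-weighted derivatives of $u$ (appearing in the $H$-definition); combined with $|y_2|^\lambda\leq\rho(y_2)^\lambda$ globally and $u\in H^{m+p}_{\mathrm{loc},\#}(Z)$ for the bounded region, it supplies the bridge between the two descriptions.

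For the inclusion $X^{m+p}_{\alpha+p,\#}\subset H^{m+p}_{\alpha+p,\#}$, fix $u\in X^{m+p}_{\alpha+p,\#}$. For $|\mu|\leq m$ the weighted bound required by $H^{m+p}_{\alpha+p,\#}$ is exactly (or more generous than) the one already provided by $u\in H^m_{\alpha,\#}$, so there is nothing to check. For $|\mu|=m+\lambda$ with $1\leq\lambda\leq p$, I proceed by induction on $\lambda$: assuming $\rho^{\alpha+\lambda'}\partial^\nu u\in L^2(Z)$ whenever $\lambda'<\lambda$ and $|\nu|=m+\lambda'$, I isolate $y_2^\lambda\partial^\mu u$ from the Leibniz expansion of $\partial^\mu(y_2^\lambda u)\in L^2_\alpha(Z)$, which is furnished by $y_2^\lambda u\in H^{m+\lambda}_{\alpha,\#}$. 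Each remainder $\rho^\alpha y_2^{\lambda-j}\partial^{\mu-je_2}u$ with $j\geq 1$ is dominated by $\rho^{\alpha+\lambda-j}\partial^{\mu-je_2}u\in L^2(Z)$ via the inductive hypothesis. Promoting $\rho^\alpha|y_2|^\lambda$ to $\rho^{\alpha+\lambda}$ at infinity and using local $H^{m+p}$-regularity on bounded $|y_2|$ yields $\rho^{\alpha+\lambda}\partial^\mu u\in L^2(Z)$, which is precisely the condition demanded by $H^{m+p}_{\alpha+p,\#}$.

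For the reverse inclusion in the case $\alpha=\tfrac{1}{2}$, fix $u\in H^{m+p}_{\frac{1}{2}+p,\#}$. The pivotal alignment is that the critical indices $k(m+p,\tfrac{1}{2}+p)$, $k(m,\tfrac{1}{2})$ and $k(m+\lambda,\tfrac{1}{2})$ all match up to the natural shift, so logarithmic weights appear on derivatives of order $\leq m-1$ consistently in every space involved; it is here that the specific value $\alpha=\tfrac{1}{2}$ is essential. The inclusion $u\in H^m_{\frac{1}{2},\#}$ then follows by iterating \eqref{inclusions_poids_log}, and for each $1\leq\lambda\leq p$ the membership $y_2^\lambda u\in H^{m+\lambda}_{\frac{1}{2},\#}$ is obtained by expanding $\partial^\mu(y_2^\lambda u)$ via Leibniz, dominating $|y_2|^{\lambda-j}$ by $\rho^{\lambda-j}$, and verifying term-by-term that each contribution is controlled by a weighted norm $\rho^{\frac{1}{2}-m+|\mu|-j}(\ln(1+\rho^2))^{-1}\partial^{\mu-je_2}u\in L^2(Z)$ already guaranteed by $u\in H^{m+p}_{\frac{1}{2}+p,\#}$. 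The main obstacle throughout is the meticulous bookkeeping of logarithmic factors in the critical range; this index-by-index care is precisely what forces the restriction to $\alpha=\tfrac{1}{2}$ in the equality, since for $\alpha=-\tfrac{1}{2}$ the space $H^m_{-\frac{1}{2},\#}$ carries no log weight whereas $H^{m+p}_{-\frac{1}{2}+p,\#}$ does, so borderline functions may lie in $H$ but not in $X$ and only the one-sided inclusion survives.
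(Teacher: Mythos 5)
Your core computation is sound and is, in substance, exactly what the paper intends: the authors give no details beyond ``a partition of unity in the $y_2$ direction and direct computations,'' and your Leibniz expansion of $\partial^\mu(y_2^\lambda u)$, the comparison $|y_2|^{\lambda-j}\leq\rho^{\lambda-j}$ away from $y_2=0$ combined with the local $H^{m+p}$ information near $y_2=0$, and the induction on $\lambda$ are precisely those computations. Your log bookkeeping also checks out in the main range $m\geq1$, $p\geq1$: one has $k(m+p,\tfrac{1}{2}+p)=m-1=k(m,\tfrac{1}{2})$ and $k(m+\lambda,\tfrac{1}{2})=m+\lambda-1$, so the logarithmic factors sit on the derivatives of $u$ of order $\leq m-1$ consistently on both sides, whereas for $\alpha=-\tfrac{1}{2}$ the space $H^{m+p}_{-1/2+p,\#}(Z)$ carries logs on orders $\leq m$ while $H^{m}_{-1/2,\#}(Z)$ carries none; this is exactly the obstruction recorded in the remark following the proposition.

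The one genuine gap is the reduction of $p\leq-1$ to $p\geq1$ ``by transposition,'' which you yourself flag as hopeful. For $p\leq-1$ the space $X^{m+p}_{\alpha+p,\#}(Z)$ is by definition the dual of $X^{-m-p}_{-\alpha-p,\#}(Z)$, a primal space with base exponent $-\alpha$ and positive shift $-p$. Dualizing reverses inclusions, so to obtain $X^{m+p}_{1/2+p,\#}(Z)\subset H^{m+p}_{1/2+p,\#}(Z)$ (say with $m+p<0$, where the $H$-space is itself a dual) you would need the dense inclusion $H^{-m-p}_{-1/2-p,\#}(Z)\subset X^{-m-p}_{-1/2-p,\#}(Z)$ at base exponent $-\tfrac{1}{2}$ — which is precisely what fails. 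What transposition actually yields for $\alpha=\tfrac{1}{2}$, $p\leq-1$ is $H^{m+p}_{1/2+p,\#}(Z)\subset X^{m+p}_{1/2+p,\#}(Z)$, the opposite of the first claim and only half of the asserted equality (for $\alpha=-\tfrac{1}{2}$, $p\leq-1$ it gives the full equality, which is harmless). So the negative-$p$ half of the statement needs a different argument, or the statement must be read only in the range $p\geq0$ where the paper actually uses it. A smaller point: in the mixed cases $m<0\leq m+p$ the condition $u\in H^m_{\alpha,\#}(Z)$ is a negative-order (dual) condition, so your Leibniz identity has to be interpreted distributionally there; this deserves at least a sentence.
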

\noindent The proof  is based on a partition of unity in the $y_2$ direction and direct computations. 
Note that \eqref{prop_identite_espaces_critiques_egal} is not valid for $\alpha=-\frac{1}{2}$ and $p\geq1$ since the space 
$H_{-1/2+p,\#}^{m+p}(Z)$ is not included in $H_{-1/2,\#}^{m}(Z)$ (see \eqref{inclusions_poids_log}).

\begin{proof}[Sketch of the proof of Theorem \ref{thm.final}]
We shall decompose it into three successive steps:
\begin{enumerate}
\item  If $\alpha>-1/2$ and $f\in L_{\alpha+1}^2(Z)\bot\PP_{[1/2+\alpha]}^{'\Delta}$,
then $G * f\in X_{\alpha+1,\#}^2(Z)$ is a solution of the Laplace equation \eqref{Laplace} 
unique up to a polynomial of $\pP_{q(2,\alpha+1)}$.
Moreover, we have the estimate
$$\|G * f\|_{X_{\alpha+1,\#}^2(Z)/\pP_{q(2,\alpha+1)}}\leq C \|f\|_{L_{\alpha+1}^2(Z)}.$$
This result is a straightforward consequence of the isomorphism result \eqref{iso_final} for $m=0$, Theorem \ref{thm.convo.reg} and Proposition \ref{prop_noyau}. 
\item  If $\alpha\in\R$ and $f\in H_{\alpha,\#}^{-1}(Z)\bot\PP_{q(1,-\alpha)}^{'\Delta}$,
then $G * f\in H_{\alpha,\#}^1(Z)$ is a solution of the Laplace equation \eqref{Laplace} unique up to a polynomial of $\pP_{q(1,\alpha)}$.
Moreover, we have the estimate
$$\|G * f\|_{H_{\alpha,\#}^1(Z)}\leq C \|f\|_{H_{\alpha,\#}^{-1}(Z)}.$$
This second claim is obtained using the first step and duality arguments. Then the isomorphism result \eqref{iso_final} for $m=-1$ and Proposition \ref{prop_noyau} allows to 
identify the variational solution with the solution by convolution. 
\item In the third step we first assume $m\geq0$, $\alpha\in\R$ and let $f\in X_{\alpha,\#}^{m}(Z)\bot\pP_{q(-m,-\alpha)}$. 
Then it is clear that $f\in H_{\alpha-m-1,\#}^{-1}(Z)\bot\pP_{q(1,-\alpha+m+1)}$. Note that by definition $\pP_{q(-m,-\alpha)}$ and  $\pP_{q(1,-\alpha+m+1)}$
coincide. On the one hand, from the second step $G*f$ belongs to $\wsd{1}{\alpha-m-1}{Z}/\pP_{q(1,\alpha-m-1)}$. On the other hand, from \eqref{iso_final}, there exists 
a unique $u$ in $\wsx{m+2}{\alpha}{Z}/\pP_{q(m+2,\alpha)}$, a subset of $\wsd{1}{\alpha-m-1}{Z}/\pP_{q(m+2,\alpha)}$. Then using Proposition \ref{prop_noyau}
we have $u-G*f\in\pP_{q(m+2,\alpha)}$.\\
Finally using again a duality argument we prove the statement for $m\leq0$ and $\alpha\in\R$.

\end{enumerate}

\end{proof}

\section{Conclusion}

This paper is a first attempt towards a systematic analysis of boundary layer problems
in periodic strips. This framework provides generic spaces avoiding tedious 
{\em a priori} definitions of solutions' behavior at infinity, among other advantages.

Nevertheless, in the homogenization theory, solutions of boundary 
layer problems  often  converge exponentially to zero up to some polynomial at infinity 
(when $y_2 \to \infty$) \cite{Ba.Siam.76}. Although in the  framework above this corresponds
to a solution belonging modulo this polynomial to any weighted polynomial space, one would like
to quantify the exponential rate of convergence \cite{Ba.Siam.76}, for instance. 
In a forecoming work, our goal should be to provide 
an adequate framework of these results in the weighted context, and to consider exterior 
periodic domains \cite{AmGiGiII.97} for the Laplace and the Stokes operators.





\bibliographystyle{elsarticle-num}

\begin{thebibliography}{10}
\expandafter\ifx\csname url\endcsname\relax
  \def\url#1{\texttt{#1}}\fi
\expandafter\ifx\csname urlprefix\endcsname\relax\def\urlprefix{URL }\fi
\expandafter\ifx\csname href\endcsname\relax
  \def\href#1#2{#2} \def\path#1{#1}\fi

\bibitem{AmGiGiI.94}
C.~Amrouche, V.~Girault, J.~Giroire, Weighted sobolev spaces and laplace's
  equation in {$\R^n$}, Journal des Mathematiques Pures et Appliquees 73
  (January 1994) 579--606.

\bibitem{Ha.71}
B.~Hanouzet, Espaces de {S}obolev avec poids application au probl{\`e}me de
  {D}irichlet dans un demi espace, Rend. Sem. Mat. Univ. Padova 46 (1971)
  227--272.

\bibitem{AmGiGiII.97}
C.~Amrouche, V.~Girault, J.~Giroire, Dirichlet and neumann exterior problems
  for the n-dimensional laplace operator an approach in weighted sobolev
  spaces, Journal des Mathematiques Pures et Appliquees 76 (January 1997)
  55--81(27).

\bibitem{Girault_DIE_1994}
V.~Girault, The {S}tokes problem and vector potential operator in
  three-dimensional exterior domains: an approach in weighted {S}obolev spaces,
  Differential Integral Equations {\bf 7}~(2) (1994) 535--570.

\bibitem{Girault_JFSUT_1992}
V.~Girault, The gradient, divergence, curl and {S}tokes operators in weighted
  {S}obolev spaces of {$\mathbb{R}^3$}, J. Fac. Sci. Univ. Tokyo Sect. IA Math.
  {\bf 39}~(2) (1992) 279--307.

\bibitem{Boulmezaoud_M2AS_2003}
T.~Z. Boulmezaoud, On the {L}aplace operator and on the vector potential
  problems in the half-space: an approach using weighted spaces, Math. Methods
  Appl. Sci. {\bf 26}~(8) (2003) 633--669.

\bibitem{Boulmezaoud_M2AS_2002}
T.~Z. Boulmezaoud, On the {S}tokes system and on the biharmonic equation in the
  half-space: an approach via weighted {S}obolev spaces, Math. Methods Appl.
  Sci. {\bf 25}~(5) (2002) 373--398.

\bibitem{MR2665021}
C.~Amrouche, U.~Razafison, Isotropically and anisotropically weighted {S}obolev
  spaces for the {O}seen equation, in: Advances in mathematical fluid
  mechanics, Springer, Berlin, 2010, pp. 1--24.

\bibitem{MR2454803}
U.~Razafison, The stationary {N}avier-{S}tokes equations in 3{D} exterior
  domains. {A}n approach in anisotropically weighted {$L^q$} spaces, J.
  Differential Equations 245~(10) (2008) 2785--2801.
\newblock \href {http://dx.doi.org/10.1016/j.jde.2008.07.014}
  {\path{doi:10.1016/j.jde.2008.07.014}}.

\bibitem{MR2329265}
C.~Amrouche, U.~Razafison, The stationary {O}seen equations in {$\Bbb R^3$}.
  {A}n approach in weighted {S}obolev spaces, J. Math. Fluid Mech. 9~(2) (2007)
  211--225.
\newblock \href {http://dx.doi.org/10.1007/s00021-005-0197-z}
  {\path{doi:10.1007/s00021-005-0197-z}}.

\bibitem{MR2329264}
C.~Amrouche, U.~Razafison, Weighted {S}obolev spaces for a scalar model of the
  stationary {O}seen equations in {$\Bbb R^3$}, J. Math. Fluid Mech. 9~(2)
  (2007) 181--210.
\newblock \href {http://dx.doi.org/10.1007/s00021-005-0195-1}
  {\path{doi:10.1007/s00021-005-0195-1}}.

\bibitem{JaMiJDE.01}
W.~J{\"a}ger, A.~Mikeli{\'c}, On the roughness-induced effective boundary
  condition for an incompressible viscous flow, J. Diff. Equa. 170 (2001)
  96--122.

\bibitem{NeNeMi.06}
N.~Neuss, M.~Neuss-Radu, A.~Mikeli{\'c}, Effective laws for the poisson
  equation on domains with curved oscillating boundaries., Applicable Analysis
  85 (2006) 479--502.

\bibitem{MiStent}
V.~Mili{{\v s}}i{{\'c}}, Blood-flow modelling along and trough a braided
  multi-layer metallic stent,
  \href{http://hal.archives-ouvertes.fr/hal-00409446/fr/ }{submitted}.

\bibitem{BrMiQam}
D.~Bresch, V.~Milisic, High order multi-scale wall laws~: part i, the periodic
  case, accepted for publication in Quart. Appl. Math. 2008.

\bibitem{AmNe01}
C.~Amrouche, {\v{S}}.~Ne{\v{c}}asov{\'a}, Laplace equation in the half-space
  with a nonhomogeneous {D}irichlet boundary condition, in: Proceedings of
  {P}artial {D}ifferential {E}quations and {A}pplications ({O}lomouc, 1999),
  Vol. 126, 2001, pp. 265--274.

\bibitem{These_Giroire}
J.~Giroire, \'{E}tude de quelques probl{\`e}mes aux limites ext{\'e}rieurs et
  r{\'e}solution par {\'e}quations int{\'e}grales, Th{\`e}se de Doctorat
  d'{\'E}tat, Universit{\'e} Pierre et Marie Curie-ParisVI.

\bibitem{Ba.Siam.76}
I.~Babu{\v{s}}ka, Solution of interface problems by homogenization. parts {I}
  and {II}, SIAM J. Math. Anal. 7~(5) (1976) 603--645.

\bibitem{JaMiFilter}
W.~J{\"a}ger, A.~Mikelic, On the effective equations for a viscous
  incompressible fluid flow through a filter of finite thickness, Comm. Pure
  Appl. Math.

\bibitem{McOwen.1979}
R.~C. McOwen, The behavior of the {L}aplacian on weighted {S}obolev spaces,
  Comm. Pure Appl. Math. 32~(6) (1979) 783--795.
\newblock \href {http://dx.doi.org/10.1002/cpa.3160320604}
  {\path{doi:10.1002/cpa.3160320604}}.

\bibitem{VoKhacKhoan.Book2.1972}
K.~Vo-Khac, Distributions, Analyse de Fourier, op{\'e}rateurs aux
  d{\'e}riv{\'e}es partielles, tome 2, Vuibert, Paris, 1972.

\bibitem{Book_Schwartz}
L.~Schwartz, Th{\'e}orie des distributions, Publications de l'Institut de
  Math{\'e}matique de l'Universit{\'e} de Strasbourg, No. IX-X. Nouvelle
  {\'e}dition, enti{\'e}rement corrig{\'e}e, refondue et augment{\'e}e,
  Hermann, Paris, 1966.

\bibitem{Rapport_Nguetseng}
G.~Nguetseng, Espaces de distributions sur des ouverts p{\'e}riodiques et
  applications, Inria report RR-0172 (1982) available at
  http://hal.inria.fr/inria--00076386/fr/.

\bibitem{Book.Yos}
K.~Yosida, Functional analysis, Classics in Mathematics, Springer-Verlag,
  Berlin, 1995, reprint of the sixth (1980) edition.

\bibitem{Rudin.Book.1975}
W.~Rudin, Analyse r{\'e}elle et complexe, Masson et Cie, {\'E}diteurs, Paris,
  1975, traduit de l'anglais par N. Dhombres et F. Hoffman.

\bibitem{Book.ReSi}
M.~Reed, B.~Simon, Methods of modern mathematical physics. {II}. {F}ourier
  analysis, self-adjointness, Academic Press [Harcourt Brace Jovanovich
  Publishers], New York, 1975.

\bibitem{CoDauNi.11.Book}
M.~Costabel, M.~Dauge, S.~Nicaise, Corner singularities and analytic regularity
  for linear elliptic systems, preprint, 2011.

\bibitem{NiWa.73}
L.~Nirenberg, H.~Walker, The null spaces of elliptic partial differential
  operators in {${\bf R}^{n}$}, J. Math. Anal. Appl. 42 (1973) 271--301,
  collection of articles dedicated to Salomon Bochner.

\bibitem{CoDauNi}
M.~Costabel, M.~Dauge, S.~Nicaise, Corner singularities and analytic regularity
  for linear elliptic systems. part i: Smooth domains, book project in progress
  (2010).

\bibitem{ErGu.04.book}
A.~Ern, J.-L. Guermond, Theory and Practice of Finite Elements, Vol. 159 of
  Applied Mathematical Series, Springer-Verlag, New York, 2004.

\bibitem{CaZy.57}
A.~Calder{\'o}n, A.~Zygmund, A singular integral operators and differential
  equations, Amer. J. Math. 79 (1957) 901--921.

\end{thebibliography}
\def\cprime{$'$}








\end{document}